\newdimen\bibspace
\renewenvironment{thebibliography}[1]{%
 \section*{\refname 
       \@mkboth{\MakeUppercase\refname}{\MakeUppercase\refname}}%
     \list{\@biblabel{\@arabic\c@enumiv}}%
          {\settowidth\labelwidth{\@biblabel{#1}}%
           \leftmargin\labelwidth
           \advance\leftmargin\labelsep
           \itemsep\bibspace
           \parsep\z@skip     %
           \@openbib@code
           \usecounter{enumiv}%
           \let\p@enumiv\@empty
           \renewcommand\theenumiv{\@arabic\c@enumiv}}%
     \sloppy\clubpenalty4000\widowpenalty4000%
     \sfcode`\.\@m}
    {\def\@noitemerr
      {\@latex@warning{Empty `thebibliography' environment}}%
     \endlist}
\newtheorem{thm}{Theorem}[section]
\newtheorem{lem}[thm]{Lemma}
\newtheorem{prop}[thm]{Proposition}
\newtheorem{defn}[thm]{Definition}
\newtheorem{cor}[thm]{Corollary}
\newtheorem{rem}[thm]{Remark}
\numberwithin{equation}{section}
\def\XXint#1#2#3{{\setbox0=\hbox{$#1{#2#3}{\int}$}
  \vcenter{\hbox{$#2#3$}}\kern-.5\wd0}}
                \newcommand{\lda}{\lambda}
\newcommand{\om}{\Omega}                \newcommand{\pa}{\partial}
\newcommand{\va}{\varepsilon}           \newcommand{\ud}{\mathrm{d}}             
              \newcommand{\R}{\mathbb{R}}
\newcounter{marnote}
\begin{document}

\title{Regularity of viscosity solutions of the $\sigma_k$-Loewner--Nirenberg problem}

\author{\medskip  YanYan Li\footnote{ Partially supported by NSF grants DMS-1501004, DMS-2000261, and Simons Fellows Award 677077.}, \  \ Luc Nguyen,\ \ 
Jingang Xiong\footnote{Partially supported by NSFC 11922104 and 11631002.}}

\date{}

\maketitle

\begin{abstract}
We study the regularity of the viscosity solution $u$ of the $\sigma_k$-Loewner--Nirenberg problem on a bounded smooth domain $\Omega \subset \mathbb{R}^n$ for $k \geq 2$. It was known that $u$ is locally Lipschitz in $\Omega$. We prove that, with $d$ being the distance function to $\partial\Omega$ and $\delta > 0$ sufficiently small, $u$ is smooth in $\{0 < d(x) < \delta\}$ and the first $(n-1)$ derivatives of $d^{\frac{n-2}{2}} u$ are H\"older continuous in $\{0 \leq d(x) < \delta\}$. Moreover, we identify a boundary invariant which is a polynomial of the principal curvatures of $\partial\Omega$ and its covariant derivatives and vanishes if and only if  $d^{\frac{n-2}{2}} u$ is smooth in $\{0 \leq d(x) < \delta\}$. Using a relation between the Schouten tensor of the ambient manifold and the mean curvature of a submanifold and related tools from geometric measure theory, we further prove that, when $\partial\Omega$ contains more than one connected components, $u$ is not differentiable in $\Omega$. 
\end{abstract} 

\tableofcontents

\section{Introduction}

For a positive $C^2$ function $u$ defined on an open subset of $\mathbb{R}^n$, $n \geq 3$, let $A^u$ denote its conformal Hessian, namely 
\[
A^{u}= -\frac{2}{n-2}u^{-\frac{n+2}{n-2}} \nabla^2 u+\frac{2n}{(n-2)^2}u^{-\frac{2n}{n-2}} \nabla u \otimes \nabla u-\frac{2}{(n-2)^2}u^{-\frac{2n}{n-2}} |\nabla u|^2 I
\]
and let $\lambda(-A^u)$ denote the eigenvalues of  $-A^u$. 
For $1\le k\le n$, let $\sigma_k :\mathbb{R}^n \to \mathbb{R}$ be the $k$-th elementary symmetric function 
\[
\sigma_k (\lambda)= \sum_{1\leq i_1<\dots <i_k\leq n} \lambda_{i_1}\cdots \lambda_{i_k} 
\] 
and let $\Gamma_k$ denote the cone $\Gamma_k=\{\lambda=(\lambda_1,\dots ,\lambda_n):\sigma_1( \lambda )>0, \dots, \sigma_k(\lambda) >0\}$. Let $\Omega$ be a bounded domain in $\mathbb{R}^n$ and consider the $\sigma_k$-Loewner--Nirenberg problem in $\Omega$, i.e. the problem of finding a positive solution to
\begin{equation}\label{eq:main-1a}
\sigma_k (\lambda(-A^u))= N_k, \quad \lambda(-A^u) \in \Gamma_k, \quad u>0 \quad \mbox{in }\Omega, 
\end{equation}
\begin{equation} \label{eq:main-1b}
u(x)\to \infty \quad \mbox{as }d(x) \to 0, 
\end{equation}
where $N_k=2^{-k} \tbinom{n}{k}$ and $d(x)=\mathrm{dist}(x,\partial \Omega)$. The geometric nature of problem \eqref{eq:main-1a}--\eqref{eq:main-1b} comes from the fact that $u^{\frac{4}{n-2}} A^u_{ij} \ud x^i \ud x^j$ is the Schouten tensor of the conformally flat metric $u^{\frac{4}{n-2}}\mathring{g}$ on $\Omega \subset \mathbb{R}^n$, where $\mathring{g}$ is the Euclidean metric.

When $k = 1$, problem \eqref{eq:main-1a}--\eqref{eq:main-1b} reduces to
\begin{equation}\label{eq:sm-1a}
\Delta u = \frac{n(n-2)}{4} u^{\frac{n+2}{n-2}}, \quad u>0 \quad \mbox{in }\Omega, \quad  u(x)\to \infty \quad \mbox{as }d(x) \to 0.
\end{equation}
This problem was first studied in the classical paper of Loewner and Nirenberg \cite{LoewnerNirenberg} where, among other results, the existence of a unique smooth positive solution was proved when the  boundary of $\Omega$ is smooth and compact. Since then, further studies of problem \eqref{eq:sm-1a} and its generalization in manifold settings have been done by many authors; see e.g. Allen, Isenberg, Lee and Allen \cite{AILA18}, Andersson, Chru\'sciel and Friedrich \cite{Andersson-Chrusciel-Friedrich}, Aviles \cite{Aviles82-CPDE}, Aviles and McOwen \cite{AvilesMcOwen88}, Finn \cite{Finn98-CPDE}, Gover and Waldron \cite{GoverWaldron17}, Graham \cite{Graham17-ProcAMS}, Han, Jiang and Shen \cite{HanJiangShen20}, Han and Shen \cite{HanShen20-JFA}, Jiang \cite{Jiang21-JFA}, Mazzeo \cite{Mazzeo91-Indiana}, V\'eron \cite{Veron81-JDE} and the references therein.

When $2 \leq k \leq n$, the $\sigma_k$-Loewner--Nirenberg problem \eqref{eq:main-1a}--\eqref{eq:main-1b} is a fully nonlinear (non-uniformly) elliptic problem of Hessian type. Some key results for this problem and its analogues on manifolds have been obtained by Chang, Han and Yang \cite{ChangHanYang05-JDE}, Gonz\'{a}lez, Li and Nguyen \cite{GonLiNg}, Guan \cite{Guan:negative-Ricci}, Gursky, Streets and Warren \cite{Gursky-Streets-Warren}, Gurksy and Viaclovsky \cite{Gursky-Viaclovsky:negative-curvature}, and Li and Nguyen \cite{LiNg21-JMS}. For other related works, see also Li and Sheng \cite{Li-Sheng:flow},  Sui \cite{Sui},  Wang \cite{Wang21-CVPDE} and the references therein.

When $\Omega$ is an annulus $\{a<|x|<b\}$ for some positive constants $a<b$, it was shown in Chang, Han and Yang \cite{ChangHanYang05-JDE} that \eqref{eq:main-1a}--\eqref{eq:main-1b} has no rotationally symmetric $C^2$ solution. 

In a closely related context, Li in \cite{Li09-CPAM} gave the definition of viscosity solutions to nonlinear Yamabe-type equations, proved comparision principles for Lipschitz viscosity solutions and in turn established Liouville-type theorems and local gradient estimates for solutions of  general nonlinear Yamabe-type equations.  Such comparison principles  were strengthened by Li, Nguyen and Wang in \cite{LiNgWang18-CVPDE} showing that they hold for continuous viscosity solutions and also  for a larger class of equations. Based on the latter comparison principles, Gonz\'{a}lez, Li and Nguyen proved in \cite{GonLiNg} the uniqueness of continuous viscosity solutions to the $\sigma_k$-Loewner--Nirenberg problem for general $\Omega$ with smooth boundary.  The combination of this uniqueness result and the above mentioned result of Chang, Han and Yang implies that  there is no $C^2$ solution on any annulus. 

Gonz\'{a}lez, Li and Nguyen also proved in \cite{GonLiNg} the existence of a locally Lipschitz viscosity solution to the $\sigma_k$-Loewner--Nirenberg problem for general $\Omega$ with smooth boundary. Furthermore, by the same work, the solution $u$ satisfies 
\begin{equation} \label{eq:main-1c}
d(x) u(x)^{\frac{2}{n-2}}\to 1 \quad \mbox{as }d(x) \to 0. 
\end{equation} 

It was then shown in Li and Nguyen \cite{LiNg21-JMS} that in the case of an annulus $\om = \{a<|x|<b\}$, \eqref{eq:main-1a}--\eqref{eq:main-1b} admits no $C^1$ solution: Speficially, the unique viscosity solution of \eqref{eq:main-1a}--\eqref{eq:main-1b} is not $C^1$ across the sphere $S_* = \{|x| = \sqrt{ab}\}$, is smooth in $\om \setminus S_*$, and is $C^{1, \frac{1}{k}}$ but not $C^{1,\gamma}$ for any $\gamma > \frac{1}{k}$ in each of $\{a < |x| \leq \sqrt{ab}\}$ and $\{\sqrt{ab} \leq |x| < b\}$.

More generally, it is worthwhile to understand for what domain $\Omega$ problem \eqref{eq:main-1a}--\eqref{eq:main-1b} admits a $C^2$ solution and for what domain $\Omega$ it admits no $C^2$ solution. For this purpose, several specific questions were raised in \cite{LiNg21-JMS}. Our results in the sequel are efforts in addressing this issue, and in particular affirmatively confirm \cite[Conjecture 1.1]{LiNg21-JMS} and give a complete answer to \cite[Question 1.3]{LiNg21-JMS}.

Our first theorem concerns the non-existence of $C^1$ solutions for $k \geq 2$ when $\partial\Omega$ is disconnected. We would like to point out that the situation is very different when $k = 1$: The solution of the Loewner--Nirenberg problem is smooth in $\Omega$.

\begin{thm} \label{thm:main} 
Let $\Omega$ be a bounded domain in $\R^n$, $n \geq 3$, with smooth and disconnected boundary $\partial \Omega$. Suppose that $2 \leq k \leq n$.
Then problem \eqref{eq:main-1a}--\eqref{eq:main-1b} has no positive solution in $C^1(\Omega)$. In particular, the unique locally Lipschitz viscosity solution of \eqref{eq:main-1a}--\eqref{eq:main-1b} does not belong to $C^1(\Omega)$. 
\end{thm}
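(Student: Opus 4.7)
I would argue by contradiction. Suppose $u\in C^1(\Omega)$ solves \eqref{eq:main-1a}--\eqref{eq:main-1b}, and consider the conformal metric $g=u^{4/(n-2)}\mathring g$. By \eqref{eq:main-1c}, $g$ is complete at every connected component of $\partial\Omega$; by the boundary regularity theorem established earlier in this paper, $g$ is smooth on the tubular neighbourhood $\{0<d(x)<\delta\}$. Wherever $u$ is $C^2$, the identity
$$\mathrm{Ric}_g = -\bigl[(n-2)(-A_g)+\sigma_1(-A_g)\,g\bigr]$$
combined with $\lambda(-A_g)\in\Gamma_k\subset\Gamma_1$ forces $\mathrm{Ric}_g$ to be strictly negative definite. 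So the geometric content of a putative $C^1$ solution is a complete (at each end), negatively Ricci-curved conformally flat metric on $\Omega$ whose ends biject with the components of $\partial\Omega$.

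\textbf{Step 1: Produce a separating area-minimizing hypersurface.} Write $\partial\Omega = A\sqcup B$ as a non-trivial partition into unions of connected components. I would invoke geometric measure theory to minimize the $g$-area among integer multiplicity $(n-1)$-currents in the relative homology class separating $A$ from $B$. The asymptotic \eqref{eq:main-1c} makes the $g$-area of the level sets $\{d=t\}$ blow up as $t\to 0$, confining any separating current of bounded $g$-area to a fixed compact subset of $\Omega$. Standard existence, compactness, and interior regularity of area-minimizing integral currents then yield a closed $g$-area-minimizing hypersurface $\Sigma\subset\Omega$, smooth outside a singular set of Hausdorff codimension at least seven.

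\textbf{Step 2: Contradict stability via the Schouten--mean-curvature identity.} On the regular part of the closed, boundaryless $\Sigma$, minimality gives $H_\Sigma\equiv 0$ and, tested against the constant function, the stability inequality
$$\int_\Sigma\bigl(\mathrm{Ric}_g(\nu,\nu)+|B|^2\bigr)\,dV_{\Sigma,g}\le 0.$$
Here I would invoke the relation between the ambient Schouten tensor and the mean curvature/second fundamental form of the submanifold flagged in the abstract. The $\Gamma_k$-condition with $k\ge 2$ provides positivity beyond $\mathrm{Ric}_g<0$ (concretely, $\sigma_2(-A_g)>0$ controls the traceless part of $-A_g$, and hence of $\mathrm{Ric}_g$), which, combined with the Gauss equation $R_\Sigma=R_g-2\mathrm{Ric}_g(\nu,\nu)-|B|^2$, produces a strictly positive integrated lower bound for the stability integrand, contradicting the displayed inequality. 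The ``in particular'' clause follows from the uniqueness of continuous viscosity solutions proved in \cite{GonLiNg}.

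\textbf{Main obstacle.} The sharpest technical hurdle is the low interior regularity of $u$: only $C^{0,1}_{\mathrm{loc}}$ is known a priori, so $g$ may be merely continuous on an interior set, and both the GMT construction and the second-variation argument must be carried out in this low-regularity setting, presumably by mollification or by restricting to the smooth locus and passing to the limit, with additional care if $n\ge 8$ to handle the possible singular set of $\Sigma$. The conceptual crux, however, is isolating the precise algebraic mechanism by which $\sigma_k(-A_g)>0$ with $k\ge 2$ translates into a \emph{geometric} inequality on the minimal submanifold: this is exactly the ingredient responsible for the contrast with the case $k=1$, where the corresponding Loewner--Nirenberg solution is smooth and no such obstruction exists.
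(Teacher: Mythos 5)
The decisive gap is in your Step 2. The second variation cannot deliver the contradiction: under $\lambda(-A^u)\in\Gamma_k$ the metric $g_u$ has negative scalar curvature, and negative curvature pushes the stability integrand $\mathrm{Ric}_g(\nu,\nu)+|B|^2$ \emph{down}, i.e.\ it makes minimal hypersurfaces more stable, not less. Concretely, for the hyperbolic metric on the ball one has $-A\equiv\frac12 I\in\Gamma_n\subset\Gamma_k$, yet totally geodesic hyperplanes are stable minimal hypersurfaces (Jacobi operator $-\Delta_\Sigma+(n-1)>0$), so no pointwise or integrated positivity of $\mathrm{Ric}_g(\nu,\nu)+|B|^2$ can follow from the $\Gamma_k$ condition, and your claim that ``$\sigma_2>0$ controls the traceless part of $\mathrm{Ric}_g$'' and yields a positive lower bound has no supporting mechanism. (Your preliminary assertion that $\Gamma_k\subset\Gamma_1$ forces $\mathrm{Ric}_g<0$ is also false: $\Gamma_1$ gives only $R_g<0$, and already for $k=2$, $n\geq 5$ the eigenvalue vector $(-t,1,\dots,1)$ with $1<t<\frac{n-2}{2}$ lies in $\Gamma_2$ while $(n-2)\mu_1+\sigma_1(\mu)<0$.) The paper's obstruction is not a stability statement but a first-order, algebraic one: if $\lambda(M)\in\bar\Gamma_2$ then the trace of $M$ over any hyperplane is nonnegative (\cite{LiNg21-JMS}, Lemma 2.1). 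Applied to the conformal Hessian along the hypersurface, this gives the weak differential inequality \eqref{Eq:MCIneql} (Lemma \ref{Lem:12XI21MC}, Corollary \ref{Cor:Obs1}); on a \emph{closed} hypersurface that is minimal for $g_u$ (so $H_u=0$), testing against the constant function forces $u$ to be constant on $\Sigma$ and $\mathring H_\Sigma=0$, i.e.\ a closed minimal hypersurface of Euclidean space, which is impossible (the touching-sphere/strong maximum principle argument, with the singular set in $n\geq 8$ removed by a cut-off using the area bound \eqref{Eq:RegVG}). It is closedness of $\Sigma$, not stability, that carries the contradiction; this is exactly the ingredient absent when $k=1$.

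Two further points. First, in Step 1 the blow-up of the $g$-area of the level sets $\{d=t\}$ does not by itself confine a minimizing sequence of separating currents: a separating hypersurface need not have area comparable to that of the level sets, and minimizers can escape to the ends. The paper confines the minimizer with barriers: Theorem \ref{thm:mainX} gives smoothness of $u$ near $\partial\Omega$ and strict mean convexity $H_{\partial\Omega_\delta}>0$ with respect to $g_u$, Theorem \ref{thm:main} is then reduced to Theorem \ref{thm:MX} applied to $\Omega_\delta$ (this also disposes of the fact that $u$ is only $C^1(\Omega)$, not $C^1(\bar\Omega)$), and detachment of the minimizer of the obstacle problem \eqref{Eq:ObsProb} from the mean convex boundary is proved by a comparison argument (Lemma \ref{lem:BdrDetach}), with almost-minimizer regularity supplying the $C^1\cap W^{2,p}$ hypersurface. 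Second, since $u$ is merely $C^1$ and $\lambda(-A^u)\in\Gamma_k$ holds only in the viscosity sense, even formulating the obstruction inequality on the (non-smooth) minimal hypersurface requires the mollification step in Lemma \ref{Lem:12XI21MC} (smooth $u_\ell\to u$ in $C^1$ with $\lambda(-A^{u_\ell}+\varepsilon_\ell I)\in\bar\Gamma_2$); you flag this difficulty but do not address it. Your homological minimization in Step 1 could be made to work once barriers are supplied, but Step 2 as proposed is not repairable within the stability framework.
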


Some ideas supporting Theorem \ref{thm:main} were described in \cite{LiNg21-JMS}. Roughly speaking, they are as follows. On one hand, by \cite[Theorem 1.1]{LiNg21-JMS}, the complete Riemannian manifold $(\Omega, u^{\frac{4}{n-2}}\mathring{g})$, if smooth, cannot admit a minimal immersion.  On the other hand, in view of the fact that $u$ behaves like $d^{-\frac{n-2}{2}}$ near $\partial\Omega$, the hypersurface $\Sigma_\delta = \{x \in \Omega: d(x) = \delta\}$ is expected to be mean convex with respect to $u^{\frac{4}{n-2}}\mathring{g}$ for small $\delta > 0$ and so there should be a minimal hypersurface in $\Omega$ confined by $\Sigma_\delta$. In fact, this line of argument gives the following result for the differential inclusion $\lambda(-A^u) \in \bar\Gamma_2$:

\begin{thm}\label{thm:MX}
Let $\Omega$ be a bounded domain in $\R^n$, $n \geq 3$, with smooth and disconnected boundary $\partial \Omega$. There exists no positive function $u \in C^1(\bar\Omega)$ such that $\lambda(-A^u) \in \bar\Gamma_2$ in $\Omega$ in the viscosity sense and $\partial\Omega$ has non-negative mean curvature $H_{\partial\Omega} \geq 0$ with respect to $g = g_u$ and the normal pointing towards $\Omega$.
\end{thm}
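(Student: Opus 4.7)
Suppose for contradiction that such a $u$ exists. Then $g := u^{4/(n-2)}\mathring{g}$ is a $C^1$ Riemannian metric on the compact manifold-with-boundary $\bar\Omega$, and the differential inclusion $\lambda(-A^u) \in \bar\Gamma_2$ holds in the viscosity sense on $\Omega$. Since $\partial\Omega$ is disconnected, fix two distinct connected components $\Gamma_0$ and $\Gamma_1$; by hypothesis each is $g$-mean convex when oriented by the inward normal. The plan is first to produce a closed minimal hypersurface $\Sigma$ in $(\Omega, g)$ separating $\Gamma_0$ from $\Gamma_1$, and then to show that the existence of such a $\Sigma$ contradicts $\lambda(-A^u) \in \bar\Gamma_2$ via the Schouten-tensor/mean-curvature relation announced in the introduction.

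For the construction of $\Sigma$, I would directly minimize the $g$-area. Let $\mathcal{E}$ be the class of Caccioppoli sets $E \subset \bar\Omega$ enclosing $\Gamma_1$ and disjoint from $\Gamma_0$---for instance, sets satisfying $\{x : \mathrm{dist}(x,\Gamma_1) < \varepsilon\} \subset E \subset \bar\Omega \setminus \Gamma_0$ for some small $\varepsilon > 0$---and minimize the $g$-perimeter $P_g(E;\Omega)$ over $\mathcal{E}$. Since $g$ is continuous on $\bar\Omega$, standard BV-compactness and lower semicontinuity deliver a minimizer $E^\ast$, and its reduced boundary $\Sigma := \partial^\ast E^\ast \cap \Omega$ is a nonempty stationary integral $(n-1)$-current. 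By De Giorgi--Federer regularity, $\Sigma$ is a smooth embedded minimal hypersurface away from a singular set of Hausdorff codimension at least $7$. The hypothesis $H_{\partial\Omega} \geq 0$ (inward) makes $\partial\Omega$ a weak barrier against area minimizers, so by the strong maximum principle for stationary varifolds against (merely $C^{1,\alpha}$) mean-convex barriers, $\Sigma$ stays strictly inside $\Omega$; it is therefore a closed minimal hypersurface of the open manifold $(\Omega, g)$.

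Having obtained $\Sigma$, I would invoke the nonexistence statement for closed minimal hypersurfaces in ambients whose conformal Hessian satisfies $\lambda(-A) \in \bar\Gamma_2$, along the lines of \cite{LiNg21-JMS}---that result rules out minimal immersions in smooth conformally flat manifolds of this type, and its proof hinges on a direct relation between the ambient Schouten tensor and the mean curvature of a hypersurface. Applying that obstruction to $\Sigma$ yields the desired contradiction. The chief difficulty is to match the geometric-measure-theoretic output (a stationary varifold with a possibly nonempty singular set in a merely $C^1$ ambient) with the Schouten/mean-curvature obstruction (naturally phrased for smooth hypersurfaces in smooth ambients); this requires either a removable-singularities argument for $\Sigma$ on its codimension-$7$ singular set or a restatement of the obstruction directly for rectifiable stationary varifolds, together with a careful interpretation of the viscosity condition $\lambda(-A^u) \in \bar\Gamma_2$ for $C^1$ conformal factors $u$. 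By contrast, the production of the minimizer, the mean-convex barrier argument, and the separation of the boundary components are essentially standard.
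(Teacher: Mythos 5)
Your overall strategy coincides with the paper's: minimize the $g$-perimeter of sets separating two boundary components (an obstacle-type problem), extract a zero-mean-curvature hypersurface, and contradict the inclusion $\lambda(-A^u)\in\bar\Gamma_2$ via the Schouten/mean-curvature relation. But two steps, as you state them, do not go through. First, with only $H_{\partial\Omega}\ge 0$ you cannot conclude that the minimizer stays \emph{strictly} inside $\Omega$: the maximum principle against a mean-convex barrier gives the dichotomy ``strict avoidance or local coincidence,'' so if some component of $\partial\Omega$ has identically vanishing $g$-mean curvature the minimizing boundary may simply sit on that component. The paper handles this through Lemma \ref{lem:BdrDetach} and the two-case structure of Proposition \ref{prop:Conc}, and in the coincidence case applies the obstruction directly to the boundary component itself; your argument needs this case and does not treat it. Second, De Giorgi--Federer regularity giving a \emph{smooth} minimal hypersurface is not available, because $g=u^{4/(n-2)}\mathring g$ is only $C^1$: the paper instead shows the minimizer is an almost minimizer of the \emph{Euclidean} perimeter (Lemma \ref{Lem:AlMin}), obtaining $C^1$ regularity off a codimension-$\ge 8$ set, upgraded to $W^{2,p}$ via the zero-mean-curvature equation --- exactly the regularity class in which the obstruction must then be formulated.

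The decisive gap, however, is the final step, which you explicitly leave unresolved (``requires either a removable-singularities argument \dots or a restatement of the obstruction for varifolds, together with a careful interpretation of the viscosity condition''). This is precisely where the paper's new work lies, and it cannot be done by citing \cite{LiNg21-JMS}, whose obstruction assumes a smooth metric, while here $u$ is only $C^1$ and $\lambda(-A^u)\in\bar\Gamma_2$ holds only in the viscosity sense. The paper proves Lemma \ref{Lem:12XI21MC} and Corollary \ref{Cor:Obs1} by mollifying $w=\tfrac{2}{n-2}\ln u$ so that $\lambda(-A^{u_\ell}+\varepsilon_\ell I)\in\bar\Gamma_2$ (via the dual-cone weak formulation), and by the algebraic fact that $\lambda(M)\in\bar\Gamma_2$ implies $M_{ij}(\delta_{ij}-m_im_j)\ge 0$, yielding a weak differential inequality for $u$ along any $C^1\cap W^{2,p}$ hypersurface. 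Moreover the contradiction is not the abstract statement ``no closed $g$-minimal hypersurface in such an ambient'': integrating that inequality over $\Sigma$ --- against $1-\zeta$ with a cutoff adapted to a cover of the singular set with $\sum_i r_i^{n-2}<\epsilon$, which is what makes the codimension-$\ge 8$ singular set harmless --- forces $u$ to be constant on $\Sigma$ and the \emph{Euclidean} mean curvature of $\Sigma$ to vanish; the contradiction is then purely Euclidean, by touching $\Sigma$ from outside with a sphere at a point that the tangent-cone argument shows is regular. Without these ingredients (the mollification lemma, the capacity argument at the singular set, and the reduction to Euclidean minimality plus the touching-sphere argument), your proposal is an outline of the correct strategy rather than a proof.
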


It follows from the proof that in Theorem \ref{thm:MX} it is enough to assume that $\partial\Omega$ is  $C^{1,\alpha} \cap W^{2,p}$ regular for some $\alpha \in (0,1]$ and $p \geq 2$.

In the proof of Theorem \ref{thm:MX}, the assumption that $u$ is differentiable is used in the application of Corollary \ref{Cor:Obs1} and in asserting that the hypersurface $\partial E \setminus \mathcal{S}$ in Proposition \ref{prop:Conc} indeed has zero mean curvature. The rest of the proof uses only that $u$ is Lipschitz continuous. Nevertheless, the assumption $u \in C^1(\bar\Omega)$ cannot be relaxed to $u \in C^{0,1}(\Omega)$, even when one imposes the stronger assumption that $\lambda(-A^u) \in \Gamma_2$ in $\Omega$ and $H_{\partial\Omega} > 0$. Indeed, it can be easily deduced from \cite[Theorem 1.2]{LiNg21-JMS} that, for any $0 < a < c < b < \infty$ and with $\Omega = \{a < |x| < b\}$,  there exists $u \in C^{0,1}(\Omega) \cap C^\infty(\{a \leq |x| < c\}) \cap C^\infty(\{c < |x| \leq b\})$ such that $\lambda(-A^u) \in \Gamma_2$ in $\{a < |x| < b\}$ in the viscosity sense and that $H_{\partial\Omega} > 0$ with respect to $g = g_u$ and the normal pointing towards $\Omega$. In the particular case of this example, if one follows the strategy of our proof, the minimal set $E$ one would obtain in the context of Proposition \ref{prop:Conc} is the set $\{|x| < c\}$ which has smooth boundary $\partial E = \{|x| = c\}$. However, $u$ fails to be differentiable exactly on this set, the mean curvature of $\partial E$ with respect to $g_u$ is undefined, and Corollary \ref{Cor:Obs1} is inapplicable there.

Our next theorem shows that, for small $\delta > 0$, the solution $u$ of \eqref{eq:main-1a}--\eqref{eq:main-1b} is smooth in $\{0 < d(x) < \delta\}$, and confirms the aforementioned expectation that $\Sigma_\delta = \{x \in \Omega: d(x) = \delta\}$ is mean convex with respect to $u^{\frac{4}{n-2}}\mathring{g}$, which is needed in the passage from Theorem \ref{thm:MX} to Theorem \ref{thm:main}. Let $\pi$ be a smooth map defined in a neighborhood of $\partial\Omega$ such that $\pi(x)$ is the point on $\partial\Omega$ which is closest to $x$, i.e. $|x-\pi(x)|=d(x)$ near $\partial\Omega$. We show that, near $\partial\Omega$, $\ln(d\,u^{\frac{2}{n-2}})$ admits an expansion of the form
\begin{multline}
\ln\big(d(x)u(x)^{\frac{2}{n-2}}\big)  \sim   \sum_{p = 1}^{n-1} (c_{p,0} \circ \pi)(x) \,d(x)^p + (c_{n,1}\circ \pi)(x) \,d(x)^n\,\ln d(x) + (c_{n,0} \circ \pi)(x) \,d(x)^n\\
	 +  \sum_{p = n+1}^\infty \sum_{q=0}^{N_p} (c_{p,q} \circ \pi)(x) \,d(x)^p\,(\ln d(x))^q.
	\label{Eq:uPHExp}
\end{multline}
Here \eqref{Eq:uPHExp} is understood in the sense that, for every $m \geq 1$ and with $W_m = \sum_{p = 1}^m \sum_{q=0}^{N_p} (c_{p,q} \circ \pi) \,d^p\,(\ln d)^q$,
\begin{equation} \label{Eq:YYwWmEst}
\Big|\nabla_T^s \nabla^j \big(\ln\big(d(x)u(x)^{\frac{2}{n-2}}\big)  - W_m(x)\big)\Big| = O(d(x)^{m-j+\gamma}) \quad \text{ as } d(x) \rightarrow 0
\end{equation}
for $j \geq 0$, $s \geq 0$, $\gamma \in (0,1)$, where the implicit constant in \eqref{Eq:YYwWmEst} depends only on $\Omega$, $m$, $j$, $s$, $\gamma$. Here $\nabla_T$ denotes the gradient along the hypersurfaces orthogonal to $\nabla d$. It is clear that such expansion is unique.

\begin{thm} \label{thm:mainX} 
Let $\Omega$ be a bounded domain in $\R^n$, $n \geq 3$, with smooth boundary $\partial \Omega$. Then there exists $\delta_1 = \delta_1(n,k,\Omega) > 0$ such that the viscosity solution $u$ of \eqref{eq:main-1a}--\eqref{eq:main-1b} satisfies 
\[
d\,u^{\frac{2}{n-2}} \in C^\infty(\{0 < d(x) < \delta_1\}) \cap C^{n-1,\gamma}(\{0 \leq d(x) \leq \delta_1\}) \text{ for every } \gamma \in (0,1).
\]
Moreover, near $\partial\Omega$, $d\,u^{\frac{2}{n-2}}$ admits an expansion of the form \eqref{Eq:uPHExp} in the sense that \eqref{Eq:YYwWmEst} holds for every $m \geq 1$ where coefficients $c_{p,q}$ are smooth functions on $\partial\Omega$. 
Finally, if $c_{n,1} = 0$ in some $\partial\Omega \cap B(x_0,r_0)$, then $d\,u^{\frac{2}{n-2}}$ is smooth in $\{0 \leq d(x) \leq \delta_1\} \cap B(x_0,r_0) $.
\end{thm}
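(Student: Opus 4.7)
The natural strategy is to recast the problem via $w := d\,u^{2/(n-2)}$, which by \eqref{eq:main-1c} extends continuously to $\partial\Omega$ with $w|_{\partial\Omega} = 1$. Since $g_u := u^{4/(n-2)}\mathring{g} = (w/d)^2\mathring{g}$, the metric $g_u$ is conformally compact with smooth compactification $\bar g := w^2\mathring{g}$; in Fermi coordinates $(y,t)$ near $\partial\Omega$, with $t = d(x)$, equation \eqref{eq:main-1a} becomes a degenerate quasilinear equation of Fuchsian type
\[
F\bigl(y,\, t,\, w,\, \partial_t w,\, \nabla_T w,\, t\,\partial_t^2 w,\, t\,\partial_t \nabla_T w,\, \nabla_T^2 w\bigr) = 0
\]
with boundary condition $w|_{t=0} = 1$. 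The linearization at the constant $w \equiv 1$ has principal radial part $t^2 \partial_t^2 - (n-1)\,t\,\partial_t$, so its indicial roots are $0$ and $n$. This is the precise source of the polyhomogeneous expansion \eqref{Eq:uPHExp} and of the logarithmic term at order $d^n$, in analogy with the structure arising for Poincar\'e-Einstein and conformally compact problems studied by Fefferman--Graham, Mazzeo, and Andersson--Chru\'sciel--Friedrich.

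I would first build the formal expansion by inserting the ansatz $w \sim 1 + \sum_{p \ge 1} c_{p,0}(y)\,t^p$ into the equation and matching powers of $t$. For $1 \le p \le n-1$ the indicial operator is invertible at these exponents, so the matching is algebraic and uniquely determines $c_{p,0}$ as universal polynomials in the principal curvatures of $\partial\Omega$ and their covariant derivatives. At $p = n$ the indicial operator degenerates, forcing the insertion of the log term $c_{n,1}(y)\,t^n \ln t$; matching fixes $c_{n,1}$ as a specific natural curvature polynomial while leaving $c_{n,0}$ as a free global parameter, and for $p > n$ the expansion continues polyhomogeneously with higher powers of $\ln t$. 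Interior smoothness of $u$, hence of $w$, in $\{0 < d < \delta_1\}$ is inherited from the pointwise lower bound in \eqref{eq:main-1c} combined with the interior $\sigma_k$-regularity theory of \cite{GonLiNg, LiNg21-JMS}.

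The third step is to match the formal series to the viscosity solution. For each $m \ge 1$ the truncation $W_m$ solves the equation up to a residual whose size is controlled by the next term, and I would use barriers of the form $W_m \pm K_\gamma\,t^{m+\gamma}$ together with the viscosity comparison principle of \cite{LiNgWang18-CVPDE, GonLiNg} to sandwich $w$ between them, giving \eqref{Eq:YYwWmEst} for $j = s = 0$. Tangential differentiation preserves the Fuchsian structure, so the argument iterates to produce the estimates on $\nabla_T^s$-derivatives; normal derivatives follow by algebraically solving for $t\,\partial_t^2 w$ from the equation and bootstrapping. Taking $m = n-1$ yields the claimed $C^{n-1,\gamma}$ regularity up to $\partial\Omega$. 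If finally $c_{n,1}$ vanishes on $\partial\Omega \cap B(x_0, r_0)$, no log term appears in the local expansion there; a localized version of the matching argument then iterates to all $m$, yielding $w \in C^\infty$ up to that portion of the boundary.

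The main obstacle is the matching step: for a fully nonlinear, degenerate operator, one must verify that the linearization of $F$ at $W_m$ preserves the sign of the error $\pm K_\gamma t^{m+\gamma}$ uniformly in $m$ and $\gamma$, so that the perturbed functions are genuine super/sub-solutions compatible with the cone constraint $\lambda(-A^u) \in \Gamma_k$. Unlike the case $k = 1$ of \cite{LoewnerNirenberg}, this requires a precise expansion of the full linearization of $F$ at $W_m$ together with a quantitative indicial analysis, carefully matched with the viscosity comparison principle.
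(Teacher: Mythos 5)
Your Step 1 (formal polyhomogeneous expansion with indicial roots at $0$ and $n$, log term forced at order $n$, $c_{n,0}$ free) and the $C^0$ barrier/comparison argument agree in spirit with the paper's construction. However, there are two genuine gaps. The first is your assertion that smoothness of $u$ in the collar $\{0<d<\delta_1\}$ is ``inherited from the pointwise lower bound in \eqref{eq:main-1c} combined with the interior $\sigma_k$-regularity theory of \cite{GonLiNg, LiNg21-JMS}.'' No such interior regularity theory exists for this degenerate (negative-cone) equation: \cite{GonLiNg} yields only local Lipschitz regularity of the viscosity solution, and \cite{LiNg21-JMS} (and Theorem \ref{thm:main} of this paper) show the solution is in general \emph{not} $C^1$ in $\Omega$, so smoothness near $\partial\Omega$ cannot be taken for granted and is in fact the main point to be proved. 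Without it, your subsequent steps --- differentiating the equation tangentially and ``algebraically solving for $t\,\partial_t^2 w$'' --- are not meaningful for a function that is a priori only a Lipschitz viscosity solution. The paper closes exactly this gap by a quantitative mechanism you do not supply: after the $C^0$ bound $|w-W|=O(d^n)$, it rescales about boundary points so that $\hat w_r$ is $O(1/r)$-small, applies Savin's small perturbation theorem \cite{Savin07-CPDE} to get uniform $C^{2,\alpha}$ bounds, and then uses interior Schauder estimates at scale $d$ to convert the $C^0$ closeness into the derivative estimates $|\nabla^j(w-W)|=O(d^{n-j})$ (Proposition \ref{prop:PreOpExp}).

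The second gap is your plan to obtain \eqref{Eq:YYwWmEst} for \emph{every} $m$ by barriers $W_m\pm K_\gamma t^{m+\gamma}$ and the viscosity comparison principle. To apply comparison on $\Omega\setminus\Omega_\delta$ you need the ordering also on the inner hypersurface $\partial\Omega_\delta$, where $w-W_m$ is only known to be $O(\delta^n)$; for $m\geq n$ this forces $K_\gamma\to\infty$ unless you already know the estimate you are trying to prove, so the barrier argument cannot pass the indicial root $n$. Moreover $W_n$ itself requires the coefficient $c_{n,0}$, which you correctly note is a free global parameter not determined by formal matching, but you give no mechanism for extracting it from the solution. In the paper, $c_{n,0}$ is obtained as the limit of $\delta^{-n}(w-W)|_{\partial\Omega_\delta}$ via an ODE argument along the integral curves of $\nabla d$ (Lemma \ref{lem:cn0Birth}), and the expansion beyond order $n$ is propagated by the same ODE reduction combined with the Savin/Schauder derivative estimates --- not by comparison. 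As written, your proposal therefore establishes at best the expansion up to order $n$ at the $C^0$ level and leaves both the boundary regularity and the higher-order expansion unproved.
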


\begin{rem}
Let $\kappa_1, \ldots, \kappa_{n-1}$ denote the principal curvatures of $\partial\Omega$. It is seen from the proof of the theorem that $c_{n,1}$ can be written as an explicit polynomial in terms of the principal curvatures $\kappa_j$'s and their covariant derivatives up to order $2(n-1)$. \end{rem}

\begin{rem}
By the last assertion of the theorem, $c_{n,1} \equiv 0$ on $\partial\Omega$ if and only if $du^{\frac{2}{n-2}}$ is smooth in $\{0 \leq d(x) \leq \delta_1\}$. In addition, by the conformal invariance of \eqref{eq:main-1a}--\eqref{eq:main-1b}, the local vanishing property of $c_{n,1}$ is invariant under M\"obius transformations.
\end{rem}

When $k=1$, Theorem \ref{thm:mainX} was proved by Andersson, Chru\'sciel and Friedrich \cite{Andersson-Chrusciel-Friedrich} and Mazzeo \cite{Mazzeo91-Indiana}. See Han, Jiang and Shen \cite{HanJiangShen20} and Han and Shen \cite{HanShen20-JFA} for results when $\Omega$ is not a smooth domain. Also in the case $k = 1$, it was shown by Gover and Waldron \cite{GoverWaldron17} and by Graham \cite{Graham17-ProcAMS} via a volume renormalization procedure that the obstruction for the smoothness of $d u^{\frac{2}{n-2}}$ (i.e. the vanishing of the coefficient $c_{n,1}$) can be characterized using a conformally invariant energy, which can be viewed as a higher dimensional analogue of the Willmore energy. There has been an interest in understanding if analogous results can be obtained in fully nonlinear settings. For related discussions, see e.g. Fefferman \cite{Fefferman-AnnM76}, Fefferman and Graham \cite{FeffermanGraham-A85}, Gursky, Han and Stolz \cite{GurskyHanStolz21-TrAMS} and the references therein.

For a discussion about the proof of Theorem \ref{thm:mainX} and related works in the literature, see Subsection \ref{SSec:2-1}.

We conclude the introduction with a more general version of Theorem \ref{thm:main} and Theorem \ref{thm:mainX}. Let
\begin{align}  
&\Gamma\subset \Bbb R^n \mbox{ be an open convex symmetric
cone with vertex at  
the origin,} 
	\label{fG1} \\
&\{\lambda \in \Bbb R^n | \lambda_i > 0, 1 \leq i \leq n\} \subset
\Gamma \subset \{\lambda \in \Bbb R^n | \lambda_1 + \ldots + \lambda_n >
0\},
	\label{fG2}\\
&f\in C^\infty (\Gamma) \cap C^0 (\overline{\Gamma}) \mbox{ be concave,
homogeneous of degree one, and
symmetric},
	\label{fG3}\\
&f>0\ \mbox{in}\ \Gamma,
\quad f = 0 \mbox{ on } \partial\Gamma ; \quad
f_{\lambda_i} > 0 \ \mbox{in
} \Gamma  \quad \forall 1 \leq i \leq n.
\label{fG4}
\end{align}
In \eqref{fG3}, we say that $f$ is symmetric if $f(\lambda_1, \ldots, \lambda_n) = f(\lambda_{\sigma(1)}, \ldots, \lambda_{\sigma(n)})$ for any permutation $\sigma$. It follows from a more general result of \cite{GonLiNg} that the problem
\begin{equation}\label{eq:main-fGa}
f (\lambda(-A^u))= 1, \quad \lambda(-A^u) \in \Gamma, \quad u>0 \quad \mbox{in }\Omega, 
\end{equation}
\begin{equation} \label{eq:main-fGb}
u(x)\to \infty \quad \mbox{as }d(x) \to 0.
\end{equation}
has a unique continuous viscosity solution $u$. Moreover, $u \in C^{0,1}_{\rm loc}(\Omega)$. In addition, under the normalization $f(\frac{1}{2}, \ldots, \frac{1}{2}) = 1$, \eqref{eq:main-1c} holds. The proof of Theorem \ref{thm:main} and Theorem \ref{thm:mainX} can be adapted to yield the following result.

\begin{thm} \label{thm:mainfG} 
Let $\Omega$ be a bounded domain in $\R^n$, $n \geq 3$, with smooth boundary $\partial \Omega$, and let $(f,\Gamma)$ satisfy \eqref{fG1}--\eqref{fG4} together with the normalization $f(\frac{1}{2}, \ldots, \frac{1}{2}) = 1$. Then the conclusion of Theorem \ref{thm:mainX} holds for the viscosity solution $u$ of \eqref{eq:main-fGa}--\eqref{eq:main-fGb}. In addition, if $\Gamma \subset \Gamma_2$, then the conclusion of Theorem \ref{thm:main} holds.
\end{thm}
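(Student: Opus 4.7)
The plan is to verify that the proofs of Theorem \ref{thm:main} and Theorem \ref{thm:mainX} go through for general $(f,\Gamma)$ satisfying \eqref{fG1}--\eqref{fG4}, reducing to the $\sigma_k$ arguments via the structural properties of $f$, and then to combine the generalized regularity result with Theorem \ref{thm:MX} as was done in the $\sigma_k$ case.

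For the Theorem \ref{thm:mainX}-type conclusion, the inputs from prior work carry over: existence, uniqueness and local Lipschitz regularity of the viscosity solution $u$ of \eqref{eq:main-fGa}--\eqref{eq:main-fGb} are established in \cite{GonLiNg}, and the boundary asymptotic $d(x)u(x)^{\frac{2}{n-2}}\to 1$ follows from the normalization $f(\frac12,\dots,\frac12)=1$. One then runs the same scheme that will be used to prove Theorem \ref{thm:mainX}: straighten $\pa\Omega$ locally; construct barrier pairs of the form $d^{-\frac{n-2}{2}}(1+a^\pm_1 d+\dots)$; solve the recursive system that determines the coefficients $c_{p,q}$ in \eqref{Eq:uPHExp} order by order; and invoke boundary Schauder estimates in the appropriate weighted spaces to promote the formal expansion to genuine $C^{n-1,\gamma}$ regularity of $du^{\frac{2}{n-2}}$ up to $\partial\Omega$, and $C^\infty$ regularity in $\{0<d<\delta_1\}$. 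The linearization of $f(\lambda(-A^u))=1$ at the model solution $d^{-\frac{n-2}{2}}$ on the half space produces a Fuchsian ODE whose indicial roots are determined by the numbers $f_{\lambda_i}(\frac12,\dots,\frac12)$; by homogeneity $\sum_i \frac12 f_{\lambda_i}(\frac12,\dots,\frac12)=f(\frac12,\dots,\frac12)=1$, and by symmetry each $f_{\lambda_i}(\frac12,\dots,\frac12)=\frac{2}{n}$, so the indicial equation and the solvability at each order coincide with those in the $\sigma_k$ case. Concavity of $f$ and ellipticity $f_{\lambda_i}>0$ supply the Evans--Krylov interior estimates, while the boundary estimates rely only on these structural properties. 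The explicit polynomial form of the obstruction $c_{n,1}$ is a consequence of this recursive construction and therefore inherits the same shape, and the last assertion of Theorem \ref{thm:mainX} (local smoothness where $c_{n,1}$ vanishes) follows by the same termination argument.

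For the non-existence assertion, assume in addition $\Gamma\subset\Gamma_2$, suppose $\pa\Omega$ is disconnected, and suppose for contradiction that $u\in C^1(\Omega)$ is a positive solution of \eqref{eq:main-fGa}--\eqref{eq:main-fGb}. Since $\Gamma\subset\Gamma_2$, we have $\lda(-A^u)\in\Gamma\subset\Gamma_2\subset\overline{\Gamma}_2$ in $\Omega$ in the viscosity sense. By the first part of the theorem, there exists $\delta_1>0$ such that $du^{\frac{2}{n-2}}$ is smooth in $\{0<d<\delta_1\}$ and admits the expansion \eqref{Eq:uPHExp}; computing the mean curvature of $\Sigma_\delta=\{d=\delta\}$ with respect to $g_u=u^{\frac{4}{n-2}}\mathring g$ using this expansion shows that for all small $\delta>0$, $H_{\Sigma_\delta}^{g_u}>0$ with the normal pointing into $\Omega_\delta:=\{d>\delta\}$, exactly as in the passage from Theorem \ref{thm:mainX} to Theorem \ref{thm:main} in the $\sigma_k$ case. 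Then $\Omega_\delta$ is a bounded domain with smooth disconnected boundary for all small $\delta>0$, $u\in C^1(\bar\Omega_\delta)$, $\lda(-A^u)\in\overline{\Gamma}_2$ in the viscosity sense in $\Omega_\delta$, and $\pa\Omega_\delta$ has positive mean curvature with respect to $g_u$. Theorem \ref{thm:MX} applied to $\Omega_\delta$ (or its $W^{2,p}\cap C^{1,\al}$ extension noted after that theorem, which is not actually needed here since $\pa\Omega_\delta$ is smooth) then yields a contradiction.

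The main obstacle is the regularity part: one must check that at every order of the recursion determining $c_{p,q}$, the linearized operator remains non-degenerate and the solvability condition is satisfied, using only the symmetry, homogeneity, concavity and ellipticity of $f$ rather than the algebraic identities of $\sigma_k$. The computation above of $f_{\lda_i}(\frac12,\dots,\frac12)=\frac{2}{n}$ is the principal reason this succeeds, and together with the standard Evans--Krylov--Caffarelli machinery that applies to any $(f,\Gamma)$ satisfying \eqref{fG1}--\eqref{fG4} it reduces the general case to the model already handled for $\sigma_k$. Once this is in place, the deduction of the non-existence statement from Theorem \ref{thm:MX} is essentially a black-box application, as outlined above.
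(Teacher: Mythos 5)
Your overall route coincides with the paper's: both statements are obtained by rerunning the $\sigma_k$ arguments for general $(f,\Gamma)$, and your key observation that symmetry, homogeneity of degree one and the normalization force $\nabla f(\tfrac12,\ldots,\tfrac12)=(\tfrac2n,\ldots,\tfrac2n)$ is exactly the paper's modification of the recursion: instead of the identity \eqref{eq:sigma-k-fact}, one Taylor-expands $f$ about $\tfrac12 I$ and checks, using \eqref{Eq:SWmRec}, that $f(\lambda(-d^2S(W_m)))-f(\lambda(-d^2S(W_{m-1})))$ has the same leading structure as \eqref{Eq:sigmakWW}, so Lemma \ref{lem:Wm-1Wm}, Proposition \ref{prop:expansion-1} and the barrier Lemma \ref{lem:Bar1} go through with $2^{-(k-1)}\tbinom{n-1}{k-1}$ replaced by $\tfrac2n$. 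Your second paragraph (mean convexity of $\Sigma_\delta$ from the boundary expansion, then Theorem \ref{thm:MX} applied to $\Omega_\delta$, using $\Gamma\subset\Gamma_2$) is precisely the paper's proof of the second statement, which is identical to the proof of Theorem \ref{thm:main}.

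One caveat about how you describe the analytic core of the regularity part: the promotion of the $C^0$ barrier estimate to derivative estimates is not done by Evans--Krylov interior estimates or weighted boundary Schauder theory, and taken literally that step would not run, because $u$ is only a locally Lipschitz viscosity solution and the equation is degenerate elliptic, so uniform ellipticity at $u$ is not known a priori; this is the very difficulty the paper emphasizes (global differentiability in fact fails). The paper's Step 2 instead applies Savin's small perturbation theorem \cite{Savin07-CPDE} to the rescaled difference $\hat w_r$ from the exact half-space solution $-\ln x_n$, and only afterwards uses Schauder estimates and an ODE argument along integral curves of $\nabla d$ to extract $c_{n,0}$ and the higher coefficients. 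For general $(f,\Gamma)$ the only thing to verify at this step is that the modified operator $\hat G$ built from $f$ is smooth and uniformly elliptic near $0$, which follows from \eqref{fG3}--\eqref{fG4} near $\lambda=(\tfrac12,\ldots,\tfrac12)$ --- i.e. from the same structural facts you invoke. So your adaptation points are the right ones, but the machinery in this step must be Savin's small perturbation result (or an equivalent), not Evans--Krylov/Schauder applied directly to the viscosity solution.
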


The rest of the paper is organized as follows. We start with the proof of Theorem \ref{thm:mainX} and the first statement of Theorem \ref{thm:mainfG} in Section \ref{Sec:BdrExp}. We then give the proof of Theorem \ref{thm:main}, Theorem \ref{thm:MX} and the second statement of Theorem \ref{thm:mainfG} in Section \ref{Sec:MainProof}.

\bigskip

\noindent {\bf Acknowledgement:} The authors would like to thank Fang-Hua Lin for stimulating discussions. Part of this work was completed while J. Xiong was visiting Rutgers University, to which he is grateful for providing very stimulating research environments and supports. 

\section{Regularity near the boundary: Theorem \ref{thm:mainX} and its generalization}\label{Sec:BdrExp}

In this section, we prove Theorem \ref{thm:mainX} and the first part of Theorem \ref{thm:mainfG}.

\subsection{The setup and main ideas of the proof}\label{SSec:2-1}

Before discussing the strategy of the proof of Theorem \ref{thm:mainX} (and its generalization in Theorem \ref{thm:mainfG}), some comments are in order. In fully nonlinear settings, closest to our result above are those of Gursky, Streets and Warren \cite{Gursky-Streets-Warren} and Wang \cite{Wang21-CVPDE} for the equation $\sigma_k (\lambda(-Ric_{\tilde g})) = 1$ on a given compact manifold $M$ with non-empty boundary $\partial M$  for an unknown metric $\tilde g$ which is conformal to a given Riemannian metric $g$ on $M$ and whose conformal factor blows up at $\partial M$. It is known that this equation has better ellipticity property than the $\sigma_k$-Loewner--Nirenberg equation \eqref{eq:main-1a}. In \cite{Gursky-Streets-Warren}, it was established that this problem has a unique solution which is smooth in $M \setminus \partial M$. In \cite{Wang21-CVPDE}, a finite boundary expansion as in \eqref{Eq:uPHExp} up to term of order $O(d^{n}\,\ln d)$ was established in $C^0$ sense. In these works as well as works cited earlier on in the case $k = 1$, the smoothness of the solution in $\Omega$ or $M \setminus \partial M$ plays a role. In contrast, in our case, smoothness of solution is not available a priori; in fact global differentiability fails in light of Theorem \ref{thm:main}. Thus, a novelty of our proof of Theorem \ref{thm:mainX} concerns not only the smoothness of the solution near the boundary but also the quantitative nature of estimate \eqref{Eq:YYwWmEst}. In fact, even if it is given a priori that the solution is smooth near the boundary or in the whole domain, we do not know of a simpler proof. Of importance in our argument is our use of a result of Savin \cite{Savin07-CPDE} for small perturbation solutions for elliptic equations to obtain certain needed quantitative $C^2$ estimate near the boundary. See below for more details.

 Let $u=e^{\frac{n-2}{2} w}$ and 
\[
S(w)= -\nabla^2 w + \nabla w \otimes \nabla w -\frac{1}{2}|\nabla w|^2 I. 
\]
Then we see that 
\begin{align*}
A^{u}
	& =-\frac{2}{n-2} e^{-2 w}\Big(\frac{n-2}{2 }\nabla^2 w+ \frac{(n-2)^2}{4} \nabla w\otimes \nabla w\Big)\\
	&\qquad	+\frac{n}{2} e^{-2w}  \nabla w\otimes \nabla w -\frac{1}{2} e^{-2w} |\nabla w|^2\\
	&=e^{-2w} S(w). 
\end{align*}
Thus problem  \eqref{eq:main-1a}--\eqref{eq:main-1b} can be rewritten as 
\begin{equation} \label{eq:main-2a}
\sigma_k (\lambda (-S(w)))= N_k  e^{2kw}, \quad   \lambda(-S(w)) \in \Gamma_k \quad \mbox{in }\Omega,
\end{equation}
\begin{equation} \label{eq:main-2b}
w(x)\to \infty \quad \mbox{as }d(x) \to 0,
\end{equation} and \eqref{eq:main-1c} is equivalent to  
\begin{equation} \label{eq:main-2c}
\lim_{d(x)\to 0} |w(x)+\ln d(x) |=0. 
\end{equation}

For any function $v \in C^2(\Omega)$, define 
\begin{align*}
G(v)
	&:=(\sigma_k (\lambda (-S(v)))- N_k  e^{2kv})  d^{2k}= \sigma_k (\lambda (-d^2S(v)))- N_k d^{2k} e^{2kv}.
\end{align*}

Let $\Omega_{\delta_0} = \{x\in \Omega: d(x) > \delta_0\}$ with $\delta_0>0$ sufficiently small such that $d(x) \in C^\infty(\overline{\Omega \setminus \Omega_{\delta_0}})$, and $\pi: \overline{\Omega \setminus \Omega_{\delta_0}} \rightarrow \partial\Omega$ be the orthogonal projection map, i.e. $|x-\pi(x)|=d(x)$.

The idea of the proof of Theorem \ref{thm:mainX} is roughly as follows.

\medskip
\noindent\underline{Step 1:} We first show that there is a function of the form
\begin{equation}
W = -\ln d+\sum_{p=1}^{n-1} (c_{p,0} \circ \pi) d^p + (c_{n,1} \circ \pi) d^n\ln d
	\label{Eq:01II22-W1}
\end{equation}
such that $\lambda(-S(W)) \in \Gamma_k$ and $G(W) = o(d^{n})$ near $\partial\Omega$. Moreover, for any $\mu \in C^\infty(\partial\Omega)$, there exists a unique sequence $W_n =  W + (\mu \circ \pi) d^n$, $W_{n+1}$, \ldots of smooth functions near $\partial\Omega$, depending on $\mu$, with the following properties:
\begin{align*}
W_m - W_{m-1} &= o(d^{m-1})  \text{ near } \partial\Omega \text{ for } m \geq n+1,\\
\lambda(-S(W_m)) &\in \Gamma_k \text{ and } G(W_m) = o(d^m) \text{ near } \partial\Omega  \text{ for } m \geq n. 
\end{align*}
See Lemma \ref{lem:Wm-1Wm} and Proposition \ref{prop:expansion-1}. Similar constructions have appeared in other contexts, see e.g. \cite{Andersson-Chrusciel-Friedrich, Gursky-Streets-Warren, Wang21-CVPDE}. 

\medskip
\noindent\underline{Step 2:} Using a barrier argument, we obtain $w - W = O(d^n)$ at $C^0$ level. This gives the first $n$ terms in the expansion  \eqref{Eq:uPHExp} at $C^0$ level. We then use a result of Savin \cite{Savin07-CPDE} for small perturbation solutions for elliptic equations to show that $w - W = O(d^n)$ also at derivative level -- see Proposition \ref{prop:PreOpExp}. This gives local smoothness near the boundary of $u$ and the $C^{n-1,\gamma}$-regularity of $d^{\frac{n-2}{2}}u$ up to the boundary.

To explain briefly how Savin's result is applied, let us consider a point $x_0 \in \partial\Omega$ near which we would like to prove our estimate. Without loss of generality, we may assume that $x_0$ is the origin, the $x_n$ axis point toward $\Omega$ and the axes $x_1, \ldots, x_{n-1}$ are along directions tangential to $\partial\Omega$. Then the function $h(x) := -\ln x_n$ satisfies $G(h) = 0$ in $\{x_n > 0\}$. This leads us to consider the function $\hat w(x) = -h(x) + w(x)$ which satisfies 
\[
\hat G[w]
	:= \sigma_k (\lambda(-\hat S(\hat w)) - N_k e^{2k \hat w} 
		= 0,
\]
\[
\hat S(\hat w)(x)
	:= x_n^2 S(w)(x) 
	= x_n^2 S(\hat w)(x)
		- x_n (e_n \otimes \nabla \hat w +\nabla \hat w \otimes  e_n)
		+ x_n e_n \partial_n \hat w I
		- \frac{1}{2} I.
\]

Clearly $\hat G$ is smooth and, as $G(h) = 0$, $0$ is a solution for $\hat G$, i.e. $\hat G[0] = 0$, and $\hat G$ is elliptic near $0$. Note that $\hat G$ has the following scaling property: If we define $\hat w_r(x) = \hat w(x/r)$ for $r > 0$, then
\[
\hat G[\hat w_r](x)
	= \hat G[\hat w](x/r) = 0.
\]
Now, the fact that $|w - W| = O(d^n)$ obtained in Step 1 implies that, for large $r$, $|\hat w_r| \leq \frac{C}{r}$ in a small ball centered at $(0, \ldots, 0, 1)$. We may then apply Savin's result to $\hat w_r$, yielding the $C^{2,\alpha}$ regularity of $\hat w_r$ in a smaller ball around $(0, \ldots, 0, 1)$. Returning to $w$, this shows the $C^{2,\alpha}$ regularity of $w$ near $\partial\Omega$.

\medskip
\noindent\underline{Step 3:} The estimates obtained in Step 2 have the consequence that, along the integral curves of $\nabla d$, the equation $G(w) = 0$ can be recast as an ODE in the form
\[
d^2 w''(d)  - (n-2) d w'(d) - n w(d) = \text{correction terms},
\]
from which one can deduce the existence of the coefficient function $c_{n,0}$ and hence all higher coefficient functions in the expansion \eqref{Eq:uPHExp}.

\subsection{Step 1: The coefficient functions $c_{1,0}, \ldots, c_{n-1,0}$ and $c_{n,1}$}

The starting point is the following lemma.

\begin{lem}\label{lem:W0}
Let $\Omega$ be a bounded domain in $\R^n$, $n \geq 3$, with smooth boundary $\partial \Omega$. Let $\kappa_1, \ldots, \kappa_{n-1}$ denote the principal curvatures of $\partial\Omega$. Then the function $W_0 := -\ln d$ satisfies $\lambda(-S[W_0]) \in \Gamma_k$ near $\partial\Omega$ and
\[
G(W_0)
= \sigma_k\Big(\frac{1}{2}+ \frac{\kappa_1 \circ \pi d}{1-\kappa_1 \circ \pi  d}, \ldots, \frac{1}{2}+ \frac{\kappa_i \circ \pi  d}{1-\kappa_i \circ \pi  d}, \frac{1}{2}\Big) - N_k.
\]
In particular, in a neighborhood of $\partial\Omega$, $G(W_0) = O(d)$, and $G(W_0)$ is smooth and can be written as a convergent power series of $d$:
\[
G(W_0)(x) = \sum_{p=1}^\infty (a^{(0)}_{p} \circ \pi)(x) d(x)^p
\]
where $a^{(0)}_{p} $ is an explicitly computable polynomial of the principal curvatures $\kappa_1, \ldots, \kappa_{n-1}$.
\end{lem}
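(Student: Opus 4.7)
The plan is a direct computation followed by a spectral analysis of $\nabla^2 d$ near $\partial\Omega$. First, I would compute the gradient and Hessian of $W_0 = -\ln d$ explicitly:
\[
\nabla W_0 = -\frac{\nabla d}{d}, \qquad \nabla^2 W_0 = -\frac{\nabla^2 d}{d} + \frac{\nabla d \otimes \nabla d}{d^2}.
\]
Using $|\nabla d| \equiv 1$ in $\Omega \setminus \Omega_{\delta_0}$, the cross term cancels in the definition of $S$, and one obtains
\[
-S(W_0) = -\frac{\nabla^2 d}{d} + \frac{1}{2d^2} I, \qquad -d^2 S(W_0) = -d\,\nabla^2 d + \frac{1}{2} I.
\]

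Next, I would invoke the standard fact from the geometry of tubular neighborhoods: the vector $\nabla d(x)$ is an eigenvector of $\nabla^2 d(x)$ with eigenvalue $0$, and the remaining eigenvalues, in the directions tangential to $\partial\Omega$ (parallel-transported along the normal line through $\pi(x)$), are $-\kappa_i(\pi(x))/(1-\kappa_i(\pi(x))\,d(x))$ for $i = 1, \ldots, n-1$, where the $\kappa_i$ are the principal curvatures of $\partial\Omega$ relative to the inward normal. Consequently, the eigenvalues of $-d^2 S(W_0)$ are exactly
\[
\frac{1}{2} + \frac{\kappa_i(\pi(x))\,d(x)}{1 - \kappa_i(\pi(x))\,d(x)}, \quad i = 1, \ldots, n-1, \qquad \text{and} \qquad \frac{1}{2}.
\]
Since $e^{2kW_0} = d^{-2k}$, the correction term in $G$ simplifies: $N_k\,d^{2k} e^{2kW_0} = N_k$, giving the claimed identity for $G(W_0)$.

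For the cone membership, I observe that when $d$ is small enough that $|\kappa_i(\pi(x))\,d(x)| < 1/2$ (say), every eigenvalue above lies in a small neighborhood of $1/2 > 0$, so $\lambda(-d^2 S(W_0)) \in \Gamma_n \subset \Gamma_k$; cone membership of $-S(W_0)$ then follows because $\Gamma_k$ is a (positive) cone. For the final assertion, I expand each factor $\frac{\kappa_i d}{1-\kappa_i d} = \sum_{j\ge 1} \kappa_i^j d^j$ as a geometric series, convergent for small $d$, and plug into $\sigma_k$, which is a polynomial of degree $k$ in its arguments. Subtracting $N_k = \sigma_k(\frac{1}{2},\ldots,\frac{1}{2})$ kills the constant term, yielding a convergent power series in $d$ whose coefficients $a^{(0)}_p \circ \pi$ are explicit polynomials in the $\kappa_i$'s (symmetric in the $\kappa_i$'s, hence polynomials in their elementary symmetric functions). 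The leading order $O(d)$ bound is immediate.

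None of the steps presents a genuine obstacle; the only point that requires slight care is quoting the correct formula for the spectrum of $\nabla^2 d$ in a tubular neighborhood, which is classical (e.g., differentiate $|\nabla d|^2 = 1$ to see that $\nabla^2 d \cdot \nabla d = 0$, then work in Fermi coordinates around $\partial\Omega$ to read off the tangential eigenvalues from the shape operator of the parallel hypersurfaces).
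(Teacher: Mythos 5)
Your proposal is correct and follows essentially the same route as the paper: both compute $-d^2 S(W_0) = -d\,\nabla^2 d + \tfrac{1}{2}I$ and then read off its eigenvalues from the classical spectral description of $\nabla^2 d$ in a tubular neighborhood (the paper does this at a point in a principal coordinate system, citing Gilbarg--Trudinger Lemma 14.17, while you phrase it invariantly). The subsequent expansion of $\frac{\kappa_i d}{1-\kappa_i d}$ as a geometric series and the cone-membership argument match the paper's intent.
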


\begin{proof}
Pick an arbitrary point $\bar x\in \om \setminus \om_{\delta_0} $. Without loss of generality we assume $\bar x=(\bar x',\bar x_n)=(0',\bar x_n)$, $\bar x_n> 0$ and $\pi(\bar x)=0\in \partial \Omega$. Following \cite[Section 14.6]{GT}, we assume that the coordinates system $\{x',x_n\}$ is a principal coordinates system at $0$, and 
\[
\om \cap B_{r_0}=  \{x\in B_{r_0}: x_n>\varphi(x')\},
\]
where 
\[
\varphi(0')=|\nabla _{x'} \varphi(0')|=0, \quad \nabla^2_{x'} \varphi(0')= \mathrm{diag}[\kappa_1, \dots, \kappa_{n-1}].
\] 
Then we have, at $\bar x$,
\begin{equation} \label{eq:distance}
\begin{split}
\nabla d&=(0',1)=:e_n,\\ 
\nabla^2 d&=\mathrm{diag}\left[\frac{-\kappa_1}{1-\kappa_1 d}, \dots, \frac{-\kappa_{n-1}}{1-\kappa_{n-1} d},0 \right ].
\end{split}
\end{equation}
(See \cite[Lemma 14.17]{GT}.) In particular, 
\[
\pa_{ii}^2d = -\kappa_i \sum_{p=0}^\infty(\kappa_i d)^p \quad \mbox{for }1\le i\le n-1.
\]
Similarly, at $\bar x$,
\[
\pa_i \pi=e_i \quad \mbox{if }i<n, \quad \pa_n \pi=0,
\]
\[
\pa_{ij} ^2\pi= -e_n \pa_{ij} ^2 d \quad \mbox{if }i+j<2n, \quad \pa_{nn}\pi =0. 
\]
It follows that, at $\bar x$, 
\begin{align*}
\pa_i W_0
	&= \begin{cases}
-\frac{1}{d}  & \mbox{if }i=n,\\[2mm]
0  & \mbox{otherwise}, 
\end{cases}\\
\pa_{ij}^2 W_0
	&= \begin{cases} 
-\frac{\pa_{ii}^2 d}{d} &\mbox{if }i= j<n, \\[2mm] 
 \frac{1}{d^2}  & \mbox{if }i=j=n,\\[2mm]
0 & \mbox{otherwise}.
\end{cases}
\end{align*} 
Hence, 
\begin{align}
-d^2 S_{ij}(W_0)
	&= d^2 \pa_{ij}^2 W_0 -d^2 \pa_i W_0 \pa_j W_0 +\frac{d^2}{2} |\nabla W_0 |^2 \delta_{ij} \nonumber  \\
	&=\begin{cases}
\frac{1}{2}+ \frac{\kappa_i d}{1-\kappa_i d} & \mbox{if }i=j<n, \\[2mm]
\frac{1}{2} & \mbox{if }i=j=n,\\[2mm]
0 & \mbox{otherwise}. 
\end{cases}
\label{Eq:SW0}
\end{align}
Clearly, this implies that $\lambda(-S(W_0)) \in \Gamma_k$ near $\partial\Omega$ and 
\begin{align*}
\sigma_k(\lambda(-d^2 S(W_0))) 
= \sigma_k\Big(\frac{1}{2}+ \frac{\kappa_1 d}{1-\kappa_1 d}, \ldots, \frac{1}{2}+ \frac{\kappa_i d}{1-\kappa_i d}, \frac{1}{2}\Big).
\end{align*}
The conclusion follows.
\end{proof}

It will be convenient to use the following notations.

\begin{defn}\label{def:polyhom}
Let $\Omega$ be a bounded domain in $\mathbb{R}^n$, $n \geq 3$, with smooth boundary $\partial\Omega$. We denote by $\mathcal{W}$ the set of functions $f$ defined in a neighborbood of $\partial\Omega$ for which there exist a sequence $\{N_p\}$ of non-negative integers and a sequence $\{a_{p,q}\}_{p \geq 0, 0 \leq q \leq N_p}$ of smooth functions on $\partial\Omega$ such that, for every $m \geq 0$, $\gamma \in (0,1)$, $s, j \geq 0$,
\[
\Big|\nabla_T^s \nabla^j \Big(f - \sum_{p=0}^m \sum_{q=0}^{N_p} (a_{p,q} \circ \pi)\, d^{p} \,(\ln d)^q\Big)\Big| = O(d^{m- j+\gamma}) \text{ as } d(x) \rightarrow 0,
\]
where $\nabla_T$ denotes the gradient along the hypersurfaces orthogonal to $\nabla d$. Functions in $\mathcal{W}$ are sometimes known as polyhomogeneous functions. For $\ell \geq 0$, we denote by $\mathcal{W}^\ell$ the set of functions $f \in \mathcal{W}$ satisfying $f = o(d^\ell)$ near $\partial\Omega$, i.e. the coefficients $a_{p,q}$ as above vanish for all $0 \leq p \leq \ell$.
\end{defn}

By Lemma \ref{lem:W0}, $G(W_0) \in \mathcal{W}^0$. The coefficient functions $c_{1,0}, \ldots, c_{n-1,0}$ are constructed inductively via the next lemma. This lemma does not determine the coefficients functions $c_{n,q}$ for all $q$. However, if those coefficient functions are known, it can be used to determine all the coefficient functions $c_{p,q}$ with $p \geq n+1$.

\begin{lem}\label{lem:Wm-1Wm}
Let $\Omega$ be a bounded domain in $\mathbb{R}^n$, $n \geq 3$, with smooth boundary $\partial\Omega$. Assume for some $m \geq 1$ that one has found a sequence $\{N_p\}_{1 \leq p \leq m-1}$ of non-negative integers and a sequence $\{c_{p,q}\}_{1 \leq p \leq m-1, 0 \leq q \leq N_p}$  of smooth functions on $\partial\Omega$ such that the function
\[
W_{m-1} = -\ln d + \sum_{p=1}^{m-1} \sum_{q = 0}^{N_p} (c_{p,q} \circ \pi)\, d^p (\ln d)^q
\]
satisfies $\lambda(-S(W_{m-1})) \in \Gamma_k$ near $\partial\Omega$ and $G(W_{m-1}) \in \mathcal{W}^{m-1}$. If $m \neq n$, then there exist a minimal $N_m \geq 0$ and a unique sequence $\{c_{m,q}\}_{0 \leq q \leq N_m}$ such that the function
\[
W_{m} = W_{m-1} + \sum_{q = 0}^{N_m} (c_{m,q} \circ \pi)\, d^m (\ln d)^q =  -\ln d + \sum_{p=1}^{m} \sum_{q = 0}^{N_p} (c_{p,q} \circ \pi)\, d^p (\ln d)^q
\]
satisfies $\lambda(-S(W_{m})) \in \Gamma_k$ near $\partial\Omega$ and $G(W_{m}) \in \mathcal{W}^{m}$. 
\end{lem}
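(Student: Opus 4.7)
The plan is to seek $\psi := W_m - W_{m-1}$ of the form
\begin{equation*}
\psi = \sum_{q=0}^{N_m}(c_{m,q}\circ \pi)\, d^m (\ln d)^q
\end{equation*}
and determine the coefficients $c_{m,q}$ so that, after Taylor-expanding $G(W_m) = G(W_{m-1}) + L\psi + Q(\psi)$, with $L$ the linearization of $G$ at $W_{m-1}$ and $Q$ the remainder of order $\geq 2$ in $\psi$, the $d^m (\ln d)^q$-terms in $G(W_m)$ vanish for all $q$. The quadratic remainder $Q(\psi)$ lies in $\mathcal{W}^m$ whenever $m \geq 1$: since $\psi$, $\nabla\psi$, $\nabla^2 \psi$ are at worst of order $d^m(\ln d)^{N_m}$, $d^{m-1}(\ln d)^{N_m}$, $d^{m-2}(\ln d)^{N_m}$, one has $Q(\psi) = O(d^{2m}(\ln d)^{2N_m})$, which is $o(d^m)$. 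Closure of $\mathcal{W}$ under products and composition with smooth functions ensures polyhomogeneity throughout.

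The heart of the argument is the leading-order computation of $L$ on a single atom $(c \circ \pi)\,d^m (\ln d)^q$. Working in principal coordinates at a boundary point as in Lemma \ref{lem:W0}, one has $\nabla W_{m-1} = -\nabla d /d + O(1)$ and $-d^2 S(W_{m-1}) \to \frac{1}{2}\, I$ as $d \to 0$, so $\sigma_k^{ij}(-d^2 S(W_{m-1})) \to \frac{\binom{n-1}{k-1}}{2^{k-1}}\, \delta_{ij}$. A direct computation of the diagonal entries of
\begin{equation*}
-d^2\,\delta S[\psi] = d^2\,\nabla^2\psi - d^2 (\nabla W_{m-1}\otimes \nabla\psi + \nabla\psi \otimes \nabla W_{m-1}) + d^2 (\nabla W_{m-1}\cdot \nabla \psi)\,I
\end{equation*}
shows that at leading order the $(n,n)$-entry equals $m^2\, c\, d^m (\ln d)^q$, while each $(i,i)$-entry with $i<n$ equals $-m\, c\, d^m (\ln d)^q$, giving a trace of $m(m-n+1)\,c\, d^m(\ln d)^q$. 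Combining with the linearization $-2kN_k d^{2k} e^{2kW_{m-1}}\,\psi \sim -2kN_k\,\psi$ of the right-hand side and using $k\binom{n}{k}=n\binom{n-1}{k-1}$, one obtains
\begin{equation*}
L[(c\circ\pi)\,d^m(\ln d)^q] = \lambda_{m,k,n}\,(c\circ\pi)\,d^m(\ln d)^q + R_q,
\end{equation*}
with $\lambda_{m,k,n} := \frac{\binom{n-1}{k-1}}{2^{k-1}}(m-n)(m+1)$ and $R_q$ consisting of terms at the $d^m$-level with strictly lower powers of $\ln d$ plus terms in $\mathcal{W}^m$. The factor $(m-n)(m+1)$ is nonzero precisely when $m\neq n$, which is the content of the hypothesis.

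By the induction hypothesis one may write $G(W_{m-1}) = \sum_{q=0}^{N^*_m}(a_q\circ \pi)\, d^m (\ln d)^q + \mathcal{W}^m$ for some minimal integer $N^*_m \geq 0$ and smooth $a_q$ on $\partial\Omega$. Set $N_m := N^*_m$. The condition $G(W_m)\in \mathcal{W}^m$ becomes, upon matching coefficients of $d^m(\ln d)^q$, the triangular system (in descending $q$)
\begin{equation*}
\lambda_{m,k,n}\, c_{m,q} = -a_q - (\text{linear combination of }c_{m,q+1},\ldots,c_{m,N_m}),
\end{equation*}
uniquely solvable because $\lambda_{m,k,n}\neq 0$, with smooth $c_{m,q}$ on $\partial\Omega$. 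Since $d^2\nabla^2 \psi \to 0$ uniformly as $d\to 0$, the eigenvalues of $-d^2 S(W_m)$ stay close to those of $-d^2 S(W_{m-1})\in \Gamma_k$, hence inside $\Gamma_k$ after possibly shrinking the neighborhood of $\partial\Omega$; uniqueness of the sequence $\{c_{m,q}\}$ follows from the same triangular system applied to any competitor. I expect the main technical hurdle to be establishing the clean factorization $\lambda_{m,k,n} = \frac{\binom{n-1}{k-1}}{2^{k-1}}(m-n)(m+1)$, which transparently singles out $m=n$ as the resonant exponent; the rest is routine bookkeeping.
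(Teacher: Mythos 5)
Your proposal is correct and follows essentially the same route as the paper: your linearization of $G$ at $W_{m-1}$, with leading diagonal entries $m^2$ and $-m$ giving the factor $2^{-(k-1)}\binom{n-1}{k-1}(m+1)(m-n)$ after subtracting the $2kN_k$ term, reproduces the paper's expansion \eqref{Eq:GWW}, and your triangular system in descending powers of $\ln d$ is exactly \eqref{Eq:CoefRecForm}, solvable uniquely precisely because $m \neq n$. The treatment of the quadratic remainder, the minimal choice of $N_m$, and the cone condition via $-d^2 S(W_m) = \tfrac{1}{2}I + o(1)$ all match the paper's argument.
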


The following remarks are clear from the proof of the lemma.
\begin{rem}
\begin{enumerate}[(i)]
\item When $m \neq n$, the coefficient functions $c_{m,0}, \ldots, c_{m, N_m}$ can be written as some (explicitly computable) polynomials of the principal curvatures $\kappa_i$ of $\partial\Omega$, the coefficient functions $c_{p,q}$ ($1 \leq p \leq m-1$, $0 \leq q \leq N_p$) and their derivatives up to second order.

\item If some function
\[
\tilde W_{m} = W_{m-1} + \sum_{q = 0}^{\tilde N_m} (\tilde c_{m,q} \circ \pi)\, d^m (\ln d)^q 
\]
satisfies $\lambda(-S(\tilde W_{m})) \in \Gamma_k$ near $\partial\Omega$ and $G(\tilde W_{m}) \in \mathcal{W}^{m}$, then $\tilde N_m \geq N_m$, $\tilde c_{m,q} = c_{m,q}$ for $0 \leq q \leq N_m$ and $\tilde c_{m,q} \equiv 0$ for $N_m < q \leq \tilde N_m$.
\end{enumerate}
\end{rem}

\begin{proof}
For convenience, we also write $c_{p,q} = 0$ for $q > N_p$. Let $\hat c_{p,q} = c_{p,q} \circ \pi$. Pick an arbitrary point $\bar x\in \om \setminus \om_{\delta_0} $ and, after a translation and rotation of the coordinate axes, write $\bar x=(\bar x',\bar x_n)=(0',\bar x_n)$, $\bar x_n> 0$ and $\pi(\bar x)=0\in \partial \Omega$ as in the proof of Lemma \ref{lem:W0}. For $N_m$, $c_{m,1}, \ldots, c_{m,N_m}$ to be chosen, we compute at $\bar x$,
\begin{align*}
\partial_i W_m
	&= \begin{cases}
	\underbrace{\partial_i W_{m-1}}_{O(d)} + \sum_{q=0}^{N_m} \partial_i \hat c_{m,q} d^m  (\ln d)^q & \text{ if } i < n,\\[2mm]
	\underbrace{\partial_n W_{m-1}}_{= -\frac{1}{d} + O(1)} 
		+ \sum_{q=0}^{N_m}[m \hat c_{m,q} + (q+1)\hat c_{m,q+1}] d^{m-1}  (\ln d)^q &\text{if } i = n.
	\end{cases}
\end{align*}
and
\begin{align*}
\partial_i\partial_j W_m
	&= \begin{cases}
	\partial_i \partial_j W_{m-1} + \sum_{q=0}^{N_m} \partial_i\partial_j \hat c_{m,q} d^m  (\ln d)^q\\
		\qquad 
		+ \sum_{q=0}^{N_m} [m \hat c_{m,q} + (q+1)\hat c_{m,q+1}] d^{m-1}  (\ln d)^q \partial_i  \partial_j d
		&\text{if }i,j < n,\\[2mm]
	\partial_i \partial_n W_{m-1}
		+ \sum_{q=0}^{N_m} [m \partial_i \hat c_{m,q} + (q+1) \partial_i \hat c_{m,q+1}]  d^{m-1}  (\ln d)^q
		& \text{if } i < j = n,\\[2mm]
	\partial_n^2 W_{m-1} 
		+  \sum_{q=0}^{N_m} \Big[m(m-1)\hat c_{m,q} + (2m - 1)(q+1)\hat c_{m,q+1}\\
			\quad\qquad
			+  (q+1) (q+2)\hat c_{m,q+2}\Big]d^{m-2}  (\ln d)^q
		&\text{if } i = j = n.
\end{cases}
\end{align*}

In the sequel, we shall use $\textrm{err}_m$ to denote some function of the form $\sum_{i=m+1}^\infty \sum_{j=0}^{M_i} (b_{i,j} \circ \pi)\, d^{i} \,(\ln d)^j \in \mathcal{W}^m$ where each coefficient function $b_{i,j}$ are polynomials of the principal curvatures $\kappa_1$, \ldots, $\kappa_{n-1}$, the functions $\hat c_{p,q}$ with $1 \leq p \leq m, 0 \leq q \leq N_p$ and their derivatives up to the second order. 

We have, at $\bar x$,
\begin{align*}
d^2\partial_i W_m \partial_j W_m
	&= \begin{cases}
	d^2\partial_i W_{m-1} \partial_j W_{m-1}	
		 + \textrm{err}_{m} & \text{if } i, j < n,\\[2mm]
	d^2\partial_i W_{m-1} \partial_n W_{m-1}
		+ \textrm{err}_{m} & \text{if } i < n = j,\\[2mm]
	d^2(\partial_n W_{m-1})^2	
		- \sum_{q=0}^{N_m} 2\Big[m \hat c_{m,q} \\
			\qquad+ (q+1)\hat c_{m,q+1}\Big] d^{m}  (\ln d)^q
		+ \textrm{err}_{m}
		&\text{ if } i = j = n,
	\end{cases}\\
d^2|\nabla W_m|^2
	&= d^2|\nabla W_{m-1}|^2
			- \sum_{q=0}^{N_m} 2[m \hat c_{m,q} + (q+1)\hat c_{m,q+1}] d^{m}  (\ln d)^q 
			+\textrm{err}_{m}.
\end{align*}
As $-d^2 S_{ij}(W_m)= d^2 \pa_{ij}^2 W_m-d^2 \pa_i W_m\pa_j W_m+\frac{d^2}{2} |\nabla W_m|^2 \delta_{ij}$, we thus have
\begin{align}
-d^2 S_{ij}(W_m) 
	&= -d^2 S_{ij}(W_{m-1})\nonumber\\
		&\quad + \begin{cases}
		-\sum_{q=0}^{N_m} 	\Big[m \hat c_{m,q} \\
			\qquad + (q+1)\hat c_{m,q+1}\Big] \delta_{ij} d^{m}  (\ln d)^q
		  + \textrm{err}_{m} & \text{if } i,j < n,\\[2mm]
		\textrm{err}_{m} & \text{if } i < n = j,\\[2mm]
		 \sum_{q=0}^{N_m} \Big[m^2\hat c_{m,q} + 2m(q+1)\hat c_{m,q+1}\\
			\qquad
			+  (q+1) (q+2)\hat c_{m,q+2}\Big]d^{m}  (\ln d)^q
			+ \textrm{err}_{m}
			&\text{if } i = j = n.
		\end{cases}
		\label{Eq:SWmRec}
\end{align}

Recall that $-d^2S(W_0) = \frac{1}{2}I + O(d)$. Thus a simple induction using \eqref{Eq:SWmRec} gives that $-d^2S(W_{m-1}) = \frac{1}{2}I + O(d)$, $-d^2S(W_m) = \frac{1}{2}I + O(d)$ and in particular $\lambda(-S(W_m)) \in \Gamma_k$ near $\partial\Omega$. Furthermore, in view of the fact 
\begin{equation} \label{eq:sigma-k-fact}
\sigma_{k}(\lambda(-d^2 S(W_m) ))=(-1)^kd^{2k} \frac{1}{k!} \delta_{j_1\dots j_k}^{i_1\dots i_k} S_{i_1j_1}(W_m) \dots S_{i_kj_k}(W_m)
\end{equation}
where $\delta_{j_1\dots j_k}^{i_1\dots i_k}$ denotes the generalized Kronecker symbol, we have that
\begin{align}
&\sigma_k(\lambda(-d^2 S(W_m)))
	- \sigma_k(\lambda(-d^2 S(W_{m-1})))\nonumber\\
	&\quad= 2^{-(k-1)}\tbinom{n-1}{k-1} \sum_{q=0}^{N_m} 	\Big[ m(m-n+1)\hat c_{m,q}\nonumber \\
		&\qquad+ (2m - n+1)(q+1)\hat c_{m,q+1}
			+  (q+1) (q+2)\hat c_{m,q+2}\Big]d^m  (\ln d)^q
			+ \textrm{err}_m.
	\label{Eq:sigmakWW}
\end{align}
Since
\[
d^{2k} e^{2k W_m} - d^{2k} e^{2k W_{m-1}}
	= 2k \sum_{q=0}^{N_m} \hat c_{m,q} d^m (\ln d)^q + \textrm{err}_m,
\]
we deduce that
\begin{align}
&G(W_m) - G(W_{m-1})
	= 2^{-(k-1)}\tbinom{n-1}{k-1} \sum_{q=0}^{N_m} 	\Big[ (m+1)(m-n)\hat c_{m,q} \nonumber\\
		&\qquad+ (2m - n+1)(q+1)\hat c_{m,q+1}
			+  (q+1) (q+2)\hat c_{m,q+2}\Big]d^m  (\ln d)^q
			+ \textrm{err}_m.
		\label{Eq:GWW}
\end{align}

To proceed, we consider separately the cases $G(W_{m-1}) \in \mathcal{W}^{m-1} \setminus \mathcal{W}^{m}$ and $G(W_{m-1}) \in \mathcal{W}^{m-1}$.

Consider the case $G(W_{m-1}) \in \mathcal{W}^{m-1} \setminus \mathcal{W}^{m}$. Then there exist $\tilde N_{m} \geq 0$ and a sequence $\{a_{m-1,m,q}\}_{0 \leq q \leq \tilde N_{m}}$ with $a_{m-1,m,\tilde N_{m}} \not\equiv 0$ such that, for every $j \geq m$ and $\gamma \in (0,1)$, 
\[
G(W_{m-1}) - \sum_{q=0}^{\tilde N_{m}} (a_{m-1,m,q} \circ \pi) d^m \,(\ln d)^q \in \mathcal{W}^m.
\]
From \eqref{Eq:GWW}, we have that $G(W) \in \mathcal{W}^m$ if and only if $N_m \geq \tilde N_{m}$ and $c_{m,q}$'s satisfy
\begin{multline}
2^{-(k-1)}\tbinom{n-1}{k-1} \Big[ (m+1)(m-n) c_{m,q} 
		+ (2m - n+1)(q+1) c_{m,q+1}\\
		+  (q+1) (q+2) c_{m,q+2}\Big]
		= -a_{m-1,m,q} \quad \text{ for } 0 \leq q \leq N_m.
		\label{Eq:CoefRecForm}
\end{multline}
When $N_m$ takes the minimal values $\tilde N_m$, since $m \neq n$ and $c_{m,N_m+1}= c_{m,N_m+2} = 0$ (by our convention), we may solve \eqref{Eq:CoefRecForm} successively and uniquely for $c_{m,N_m}$, $c_{m, N_m - 1}$, \ldots, and eventually $c_{m,0}$, which concludes the proof in this case.

It remains to consider the case $G(W_{m-1}) \in \mathcal{W}^{m}$. The same argument as above, but simpler, gives that $G(W) \in \mathcal{W}^m$ if and only if $c_{m,q} = 0$ for all $q$. The conclusion then follows with $N_m = 0$ and $c_{m,0} \equiv 0$.
\end{proof}

While Lemma \ref{lem:Wm-1Wm} cannot determine the coefficient functions $c_{n,q}$ for all $q$, a careful inspection of the proof gives that $c_{n,1}$ can be determined this way. We have:

\begin{prop} \label{prop:expansion-1} Let $\Omega$ be a bounded domain in $\mathbb{R}^n$, $n \geq 3$, with smooth boundary $\partial\Omega$. Then there exist $n$ smooth functions $c_{1,0},\ldots, c_{n-1,0}, c_{n,1}$ on $\pa \om$ such that, for any smooth function $\mu$ on $\partial\Omega$, the function
\[
W= -\ln d+\sum_{p=1}^{n-1} (c_{p,0} \circ \pi) d^p + (c_{n,1} \circ \pi)\, d^n\ln d + (\mu \circ \pi) d^n
\]
satisfies $\lambda(-S(W)) \in \Gamma_k$ near $\partial\Omega$ and $G(W) \in \mathcal{W}^n$. Moreover, if $\tilde c_{1,0},\ldots, \tilde c_{n-1,0}, \tilde c_{n,1}, \tilde \mu$ are $n+1$ smooth functions on $\partial\Omega$ such that the function 
\[
\tilde W= -\ln d+\sum_{p=1}^{n-1}( \tilde c_{p,0} \circ \pi) d^p +  (\tilde c_{n,1} \circ \pi)\, d^n\ln d + (\tilde\mu \circ \pi) d^n
\]
satisfies $G(\tilde W) \in \mathcal{W}^n$, then $\tilde c_{1,0} = c_{1,0},\ldots, \tilde c_{n-1,0} = c_{n-1,0}$ and $\tilde c_{n,1} = c_{n,1}$.
\end{prop}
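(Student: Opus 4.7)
My approach is to iterate Lemma \ref{lem:Wm-1Wm} starting from $W_0 = -\ln d$ (handled by Lemma \ref{lem:W0}) for $m = 1, 2, \ldots, n-1$, then treat $m = n$ separately since Lemma \ref{lem:Wm-1Wm} does not apply there. The crucial observation I would make first is that if at some stage $W_m + \ln d$ is polynomial in $d$ with smooth coefficients on $\partial\Omega$ (i.e., no $\ln d$ factors), then every quantity built from $W_m$ that enters $G$---namely $d\nabla W_m$, $d^2\nabla^2 W_m$, and $d^{2k}e^{2kW_m} = e^{2k(W_m+\ln d)}$---is smooth in $d$ near $\partial\Omega$ without any $\ln d$ factor. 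Consequently $G(W_m)$ has a convergent power-series expansion in $d$ with no log factors. Starting from $G(W_0)$, which is log-free by Lemma \ref{lem:W0}, this forces the coefficients $a_{m-1,m,q}$ in the expansion of $G(W_{m-1})$ to vanish for $q \geq 1$ at each induction step $m \leq n-1$, so the recursion \eqref{Eq:CoefRecForm} yields a unique $c_{m,0}$ with minimal $N_m = 0$. The induction then produces $c_{1,0}, \ldots, c_{n-1,0}$.

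At $m = n$, the factor $(m+1)(m-n)$ appearing in \eqref{Eq:CoefRecForm} vanishes, eliminating $c_{n,0}$ from that equation---this is precisely the mechanism allowing $c_{n,0}$ to remain free. Setting $c_{n,q} = 0$ for $q \geq 2$ satisfies the $q \geq 1$ equations automatically, while the $q = 0$ equation reduces to $2^{-(k-1)}\binom{n-1}{k-1}(n+1)\,c_{n,1} = -a_{n-1,n,0}$, uniquely determining $c_{n,1}$ in terms of the principal curvatures and the previously determined $c_{p,0}$, $p \leq n-1$. I would then check that $\lambda(-S(W)) \in \Gamma_k$ still holds near $\partial\Omega$, since by the same computation as in the proof of Lemma \ref{lem:Wm-1Wm}, $d^2S(W) = d^2S(W_{n-1}) + O(d^n |\ln d|) = -\tfrac12 I + O(d|\ln d|^{1})$, which lies in the $\Gamma_k$-cone for $d$ small. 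Adding $(\mu\circ\pi)d^n$ for an arbitrary smooth $\mu$ on $\partial\Omega$ preserves $G(W) \in \mathcal{W}^n$ precisely because the contribution of a pure $d^n$ correction to the order-$n$, $q=0$ coefficient of $G$ is proportional to the vanishing factor $(n+1)(n-n)\mu$. This establishes existence.

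For uniqueness, given another $\tilde W$ of the stated form with $G(\tilde W) \in \mathcal{W}^n$, I would argue inductively on $m$. Define $\tilde W_m = -\ln d + \sum_{p=1}^m (\tilde c_{p,0} \circ \pi) d^p$ for $m \leq n-1$, and verify by the same polynomial computation as in Lemma \ref{lem:Wm-1Wm} that $G(\tilde W_m) - G(\tilde W) \in \mathcal{W}^m$ (since $\tilde W - \tilde W_m$ is a finite sum of terms of the form $(\text{smooth}) d^p (\ln d)^q$ with $p \geq m+1$, and each such term contributes to $G$ only at order $\geq m+1$). Combined with $G(\tilde W) \in \mathcal{W}^n \subset \mathcal{W}^m$, this gives $G(\tilde W_m) \in \mathcal{W}^m$, and the uniqueness remark following Lemma \ref{lem:Wm-1Wm} then yields $\tilde c_{m,0} = c_{m,0}$. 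Once all $c_{p,0}$ for $p \leq n-1$ are matched, the same truncation argument at order $n$ (truncating away the $(\tilde\mu\circ\pi)d^n$ term, which does not enter the $q=0$ equation) together with the $q = 0$ equation of \eqref{Eq:CoefRecForm} at level $n$ gives $\tilde c_{n,1} = c_{n,1}$, while $\tilde\mu$ remains unconstrained.

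The main technical point---and the only place where genuine care is needed---is the bookkeeping verifying that the formal expansion of $G$ introduces no unintended $\ln d$ factors below order $n$ and that adding $o(d^m)$-corrections to $W_m$ does not disturb lower-order coefficients of $G$. Both facts reduce to the algebraic structure of $S$ and the identity $d^{2k}e^{2kW} = e^{2k(W+\ln d)}$ noted above, and the remaining work is a direct assembly of the machinery already developed in Lemmas \ref{lem:W0} and \ref{lem:Wm-1Wm}.
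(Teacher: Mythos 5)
Your proof is correct and follows essentially the same route as the paper's: iterate Lemma \ref{lem:Wm-1Wm} from $m=1$ to $n-1$ (noting that no logarithmic terms can arise below order $n$ because $W_0$ and hence each $G(W_m)$ is log-free), then treat $m = n$ separately using the vanishing of the $(m+1)(m-n)$ factor in \eqref{Eq:CoefRecForm} to pin down $c_{n,1}$ while leaving $c_{n,0}$ free, and establish uniqueness by comparing coefficients inductively via the uniqueness remark. The only tiny imprecision is the error bound $d^2 S(W) = -\tfrac12 I + O(d\,|\ln d|)$; since $W_{n-1}$ is log-free the correct bound is $O(d)$ (the log appears only at order $d^n$), but this does not affect the conclusion that $\lambda(-S(W)) \in \Gamma_k$ near $\partial\Omega$.
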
 

\begin{rem}
It is seen from the proof that each $c_{p,q}$ in the proposition above other than $c_{n,0}$ is a polynomial in terms of the principal curvatures $\kappa_1, \ldots, \kappa_{n-1}$ of $\partial\Omega$ and their covariant derivatives up to order $2(p-1)$. In particular, 
\[
c_{1,0} =\frac{1}{2(n-1)} (\kappa_1 + \ldots + \kappa_{n-1}). 
\]
\end{rem}

\begin{proof} 
Using Lemma \ref{lem:W0} and applying Lemma \ref{lem:Wm-1Wm} successively $(n-1)$ times with $N_1 = \ldots = N_{m-1} = 0$, we find $(n-1)$ smooth functions $ c_{1,0},\ldots, c_{n-1,0}$ on $\partial\Omega$ such that, for each $1 \leq m \leq n-1$ and with
\[
W_m = -\ln d + \sum_{p=1}^m (c_{p,0} \circ \pi) d^p,
\]
we have $\lambda(-S(W_m)) \in \Gamma_k$ near $\partial\Omega$ and $G(W_m) \in \mathcal{W}^m$. In addition, it is seen from the proof of Lemma \ref{lem:Wm-1Wm} that if $G(\tilde W) \in \mathcal{W}^m$ for some $1 \leq m \leq n-1$, then $\tilde c_{1,0} = c_{1,0}, \ldots, \tilde c_{m,0} = c_{m,0}$. Moreover, so far no logarithmic terms appear in the expansion of $G(W_m)$ near $\partial\Omega$, i.e. $G(W_m) = \sum_{p=m+1}^\infty (a_{p,0} \circ \pi) d^p$. In particular, there exists a smooth function $a_{n,0}$ on $\partial\Omega$ such that
\[
G(W_{n-1}) - a_{n,0} \circ \pi d^n  \in \mathcal{W}^n.
\]
Thus, by formula \eqref{Eq:GWW}, a function of the form
\[
W= W_{n-1} + (c_{n,1} \circ \pi) d^n(\ln d)^q + (\mu \circ \pi) d^n
\]
satisfies $G(W) \in \mathcal{W}^{n}$ if and only if  
\[
c_{n,1} = - \frac{a_{n,0}}{2^{-(k-1)}(n+1) \tbinom{n-1}{k-1}} \text{ and } \mu \text{ is arbitrary}.
\]
On the other hand, by \eqref{Eq:SWmRec} and the fact that $\lambda(-S(W_{n-1})) \in \Gamma_k$ near $\partial\Omega$, we also have that $\lambda(-S(W)) \in \Gamma_k$ near $\partial\Omega$. The conclusion follows.
\end{proof}

\subsection{Step 2: Approximations of $w$ up to order $O(d^n)$}

We next show that the function $W$ constructed in Proposition \ref{prop:expansion-1} is a good approximation for the solution of \eqref{eq:main-2a}--\eqref{eq:main-2b} up to an $O(d^n)$ error at any derivative level. To show that the approximation is good in $C^0$ sense, we use the following lemma on sub- and super-solutions.

\begin{lem} \label{lem:Bar1} Let $\Omega$ be a bounded domain in $\mathbb{R}^n$, $n \geq 3$, with smooth boundary $\partial\Omega$. Let $c_{1,0},\ldots, c_{n-1,0}, c_{n,1}$ and $W$ be given by Proposition \ref{prop:expansion-1} with the choice $\mu = 0$. Then for $\theta\in (n,n+1)$  there exists $0<\delta_1\le \delta_0/2$, depending only on $n,k,\theta$ and $\om$,  such that for any positive  constants $0<\delta\le \delta_1$ and $\beta\ge 1$  with $\beta \delta^n \le 1 $, there hold  
\begin{align} \label{eq:barrier-up}
G(W+ \beta (d^n-d^{\theta})) &< 0 \quad \mbox{in }\om \setminus \om_\delta,\\
 \label{eq:barrier-low}
G(W- \beta (d^n-d^{\theta})) &>0 \quad \mbox{in }\om \setminus \om_\delta.
\end{align} 
\end{lem}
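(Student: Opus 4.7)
Plan. The proof proceeds by a direct Taylor expansion of $G(W\pm V)$ around $W$, where $V=\beta(d^n-d^\theta)$, and exploits an indicial-identity structure already visible in \eqref{Eq:GWW}: linearizing $G$ at $W$ in the direction $d^p$ gives, at leading order, the coefficient $2^{-(k-1)}\binom{n-1}{k-1}(p+1)(p-n)\,d^p$, which vanishes exactly at $p=n$ (the reason $c_{n,0}$ remained a free parameter in Proposition \ref{prop:expansion-1}) but is strictly signed at $p=\theta\in(n,n+1)$. Consequently the dominant new contribution to $G(W\pm V)-G(W)$ is $\mp c_0\beta(\theta-n)(\theta+1)\,d^\theta$ with $c_0:=2^{-(k-1)}\binom{n-1}{k-1}>0$, which already carries the correct sign for both required inequalities.

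Step 1. Fix an arbitrary $\bar x\in\Omega\setminus\Omega_{\delta_0}$ and work in the principal coordinate system at $\pi(\bar x)\in\partial\Omega$ from Lemma \ref{lem:W0}. Starting from
\[
-d^2S(W\pm V)+d^2S(W)=\mp d^2\,\delta S|_W\cdot V-d^2\bigl(\nabla V\otimes\nabla V-\tfrac12|\nabla V|^2 I\bigr),
\]
and expanding $\sigma_k$ around the base matrix $-d^2S(W)=\tfrac12 I+O(d)$, I follow the template of Lemmas \ref{lem:W0} and \ref{lem:Wm-1Wm}, with the non-integer exponent $\theta$ treated formally in the same way as the integer $m$ in \eqref{Eq:SWmRec}. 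The linearizations of $\sigma_k(\lambda(-d^2S(\cdot)))$ and of $N_k d^{2k}e^{2k\cdot}$ in $V$ then combine to yield
\[
G(W\pm V)-G(W)=\mp c_0\beta(\theta-n)(\theta+1)\,d^\theta+E^{\pm}_{\mathrm{lin}}+E_{\mathrm{quad}},
\]
where $E^{\pm}_{\mathrm{lin}}=O\bigl(\beta d^{n+1}|\log d|^{M}\bigr)$ is the linear-in-$V$ subleading remainder (the logarithms arising from differentiating the $c_{n,1}d^n\log d$ summand of $W$) and $E_{\mathrm{quad}}=O(\beta^2 d^{2n})$ is the quadratic-and-higher-in-$V$ correction, assembling the contributions from $-d^2(\nabla V\otimes\nabla V-\tfrac12|\nabla V|^2 I)$, from $\sigma_k''$, and from the Taylor remainder of $e^{2kV}-1-2kV$.

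Step 2. Since $G(W)\in\mathcal W^n$ by Proposition \ref{prop:expansion-1}, one has $|G(W)|\leq Cd^{n+\gamma}$ for every $\gamma\in(0,1)$; pick $\gamma>\theta-n$ so that $d^{n+\gamma}=o(d^\theta)$. Under the hypothesis $\beta\delta^n\leq 1$, one verifies that each of $|G(W)|$, $|E^\pm_{\mathrm{lin}}|$, and $|E_{\mathrm{quad}}|$ is, after shrinking $\delta_1\leq\delta_0/2$ depending on $n,k,\theta,\Omega$, bounded by $\tfrac12 c_0\beta(\theta-n)(\theta+1)d^\theta$ throughout $\Omega\setminus\Omega_{\delta_1}$. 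This delivers both inequalities \eqref{eq:barrier-up} and \eqref{eq:barrier-low}, and the same calculation also shows that $\lambda(-S(W\pm V))\in\Gamma_k$ near $\partial\Omega$ (needed so that $\sigma_k$ behaves as expected).

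The main technical obstacle will be the quadratic error $E_{\mathrm{quad}}$: when $\beta$ saturates $\beta=\delta^{-n}$ and $d$ approaches $\delta$, one has $\beta^2 d^{2n}=O(1)$ while $\beta d^\theta=O(\delta^{\theta-n})$ is small, so a naive application of the bound $|E_{\mathrm{quad}}|\lesssim \beta^2 d^{2n}$ is insufficient. The resolution is a careful bookkeeping of the coefficient of $\beta^2 d^{2n}$ in $E_{\mathrm{quad}}$, combining the $\sigma_k'$-action on the intrinsic $\nabla V\otimes\nabla V$ term with the $\sigma_k''$-action on $(-d^2\delta S\cdot V)^{\otimes 2}$ and with the exponential remainder; using $\binom{n-2}{k-2}=\tfrac{k-1}{n-1}\binom{n-1}{k-1}$ and $N_k=2^{-k}\binom{n}{k}$ one can track the sign of this aggregate coefficient and show it is compatible with both inequalities in the specified range of $\beta,\delta$.
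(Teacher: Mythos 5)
Your Step 1 is exactly the computation the paper performs (perturb $W$ by $\pm\beta(d^n-d^\theta)$, use that the linearization of $G$ at $W$ carries the indicial factor $(p+1)(p-n)$, so the $d^n$ part drops out at first order and the $d^\theta$ part produces the signed main term $\mp 2^{-(k-1)}\tbinom{n-1}{k-1}(\theta-n)(\theta+1)\beta d^\theta$, with $G(W)\in\mathcal{W}^n$ absorbed by shrinking $\delta_1$). The genuine gap is precisely the point you flag and then claim to resolve: the quadratic-in-$\beta$ contribution in the saturated regime $\beta\sim\delta^{-n}$, $d$ comparable to $\delta$. Your proposed fix --- tracking the sign of the aggregate coefficient of $\beta^2 d^{2n}$ and showing it is ``compatible with both inequalities'' --- cannot work. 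Since the first variation annihilates the $d^n$ direction, the $\beta^2d^{2n}$ term is the \emph{leading} effect of the $d^n$ part of the barrier, and being quadratic it has the same sign for $W+V$ and $W-V$; it can therefore help at most one of \eqref{eq:barrier-up}, \eqref{eq:barrier-low} and must oppose the other. Concretely, in the flat model $W=-\ln x_n$ (boundary curvature only enters at relative order $O(d)$), with $s=\beta x_n^n$ the eigenvalues of $-x_n^2S(-\ln x_n\mp\beta x_n^n)$ are $\tfrac12(1\pm ns)^2$ (multiplicity $n-1$) and $\tfrac12\mp n^2 s-\tfrac{n^2}{2}s^2$, and a direct expansion gives
\[
G(-\ln x_n\mp\beta x_n^n)=\tfrac12\,F''(0)\,s^2+O(s^3),
\qquad
F''(0)=-2^{1-k}\,k\binom{n}{k}\big[(2k-3)n^2+(4k-2)n+2k\big]<0
\]
for every $2\le k\le n$. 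So the quadratic term pushes $G$ \emph{down} in both directions: it reinforces \eqref{eq:barrier-up} but directly opposes \eqref{eq:barrier-low}. In the admissible corner $\beta=\delta^{-n}$, $d\sim\delta$ (exactly the range used when the lemma is invoked in Step 1 of Proposition \ref{prop:PreOpExp}), this term has size of order one while your favorable term is only $O(\delta^{\theta-n})$; no bookkeeping of binomial identities changes its sign, so your Step 2 claim that $|E_{\mathrm{quad}}|\le\tfrac12 c_0(\theta-n)(\theta+1)\beta d^\theta$ throughout $\Omega\setminus\Omega_{\delta}$ fails, and with it the asserted inclusion $\lambda(-S(W-V))\in\Gamma_k$ there.

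For comparison, the paper's proof makes no sign analysis at all: every contribution that is at least quadratic in $\beta$ is swept into $O(|\beta|d^{n+1})$, an absorption that is legitimate only when $\beta d^{n-1}=O(1)$, i.e.\ on $\{d\lesssim\beta^{-1/(n-1)}\}$ rather than on all of $\{0<d\le\delta\}$ with $\beta\delta^n\le1$. So you have correctly located the delicate step, but the way out is not a cancellation argument; any correct completion must keep the quadratic terms genuinely lower order, i.e.\ restrict to (or reduce the comparison argument to) a regime where $\beta d^{n-1}$, and in particular $\beta d^{n}$, stays small, adjusting the choice of $\beta$, $\delta$ and the region where the barrier inequalities are asserted. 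As written, your proposal does not prove the lemma in the stated generality, and the announced resolution of the ``main technical obstacle'' is not available because the sign of the $\beta^2d^{2n}$ coefficient is unfavorable for \eqref{eq:barrier-low}.
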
 

\begin{proof} Let $W_\beta = W - \beta (d^n-d^{\theta})$. In the proof, all implicit constants in big $O$-terms are independent of $\beta$. Pick an arbitrary point $\bar x\in \om \setminus \om_{\delta} $ and write $\bar x=(\bar x',\bar x_n)=(0',\bar x_n)$, $\bar x_n> 0$ and $\pi(\bar x)=0\in \partial \Omega$ as in the proof of Lemma \ref{lem:W0}. Computing as in the proof of Lemma \ref{lem:Wm-1Wm}, we have, at $\bar x$,
\begin{align*}
\partial_i W_\beta
	&= \begin{cases}\partial_i W = O(d) & \text{ if } i < n,\\[2mm]
	\underbrace{\partial_n W}_{=-\frac{1}{d} + O(1)}
		- n \beta d^{n-1} + \theta\beta d^{\theta-1} &\text{if } i = n.
	\end{cases}
\end{align*}
and
\begin{align*}
\partial_i\partial_j W_\beta
	&= \begin{cases}
	\partial_i \partial_j W \underbrace{- n \beta d^{n-1} \partial_i \partial_j d + \theta \beta d^{\theta-1}\partial_i \partial_j d}_{=  O(|\beta| d^{n-1})}&\text{if }i,j < n,\\[2mm]
	\partial_i \partial_n W& \text{if } i < j = n,\\[2mm]
	\partial_n^2 W
		- n(n-1)\beta d^{n-2} + \theta(\theta-1)\beta d^{\theta-2}.
		&\text{if } i = j = n.
\end{cases}
\end{align*}
Hence, at $\bar x$,
\begin{align*}
d^2\partial_i W_\beta \partial_j W_\beta
	&= \begin{cases}
	d^2\partial_i W \partial_j W& \text{if } i, j < n,\\[2mm]
	d^2\partial_i W \partial_n W
		+ O(|\beta| d^{n+2})& \text{if } i < n = j,\\[2mm]
	d^2(\partial_n W_{m-1})^2	+ 2n\beta d^n -2 \theta \beta d^\theta + O(|\beta| d^{n+1})
		&\text{ if } i = j = n,
	\end{cases}\\
d^2|\nabla W_\beta|^2
	&= d^2|\nabla W|^2  + 2n\beta d^n -2 \theta \beta d^\theta + O(|\beta| d^{n+1}),
\end{align*}
and
\begin{align*}
-d^2 S_{ij}(W_\beta)&=  d^2 \pa_{ij}^2 W_\beta-d^2 \pa_i W_\beta\pa_j W_\beta+\frac{d^2}{2} |\nabla W_\beta|^2 \delta_{ij} \nonumber  \\&=-d^2 S_{ij}(W) + \begin{cases}
n\beta  d^{n}  - \theta \beta  d^{\theta} + O(|\beta|d^{n+1})     &\mbox{if }i=j<n, \\[2mm]
 -n^2 \beta d^n+  \theta^2 \beta d^{\theta}  + O(|\beta|d^{n+1}) & \mbox{if }i=j=n,\\[2mm]
O(|\beta|d^{n+1}) & \mbox{otherwise}. 
\end{cases}
\end{align*}
Using the fact  \eqref{eq:sigma-k-fact} with  $k\ge 2$ and that $-d^2 S(W) = \frac{1}{2}I + O(d)$, we have 
\begin{align*}
\sigma_k (\lda(-d^2 S_{ij}(W_\beta) ))&= \sigma_k (\lda(-d^2 S_{ij}(W) )) 
	 - 2^{-k+1} \tbinom{n-1}{k-1}n \beta  d^{n}  \\
	 &\qquad
	 +2^{-k+1} \tbinom{n-1}{k-1}   \theta ( \theta+1 -n  ) \beta d^{\theta} +O(|\beta| d^{n+1}).   
\end{align*}
Also,
\begin{align*}
d^{2k}e^{2k W_\beta} - d^{2k} e^{2kW}&= 
-2k \beta d^n + 2k\beta d^\theta + O(|\beta| d^{n+1}).
\end{align*}
Therefore,
\begin{align*}
G(W_\beta) - G(W)=  2^{-k+1}\tbinom{n-1}{k-1} (\theta-n)(\theta+1)   \beta d^\theta+ O(|\beta|d^{n+1}). 
\end{align*}
Since $G(W) \in \mathcal{W}^{n}$, we have $G(W) = O(d^{\frac{1}{2}(\theta + n+1)}$ and so \eqref{eq:barrier-low} follows. Reversing the sign of $\beta$ in the above computation, we obtain also \eqref{eq:barrier-up}.
\end{proof}

We now show that $w - W = O(d^n)$ at all derivative level and in particular the regularity of $w$ near the boundary.

\begin{prop} \label{prop:PreOpExp} 
Let $\Omega$ be a bounded domain in $\R^n$, $n \geq 3$, with smooth boundary $\partial \Omega$. Then there exists $0<\delta_1\le \delta_0/2$, depending only on $n,k$ and $\om$, such that the solution of \eqref{eq:main-2a}--\eqref{eq:main-2b} satisfies
\[
 w + \ln d \in C^\infty(\Omega \setminus\Omega_{\delta_1}) \cap C^{n-1,\gamma}(\overline{\Omega \setminus \Omega_{\delta_1}}) \text{ for every } \gamma \in (0,1).
\]
Moreover, with $c_{1,0},\ldots, c_{n-1,0}, c_{n,1}$ given by Proposition \ref{prop:expansion-1} and
\[
W= -\ln d+\sum_{p=1}^{n-1} (c_{p,0} \circ \pi) d^p + (c_{n,1} \circ \pi)\, d^n\ln d,
\]
we have for every $j \geq 0$ that
\begin{equation} \label{eq:u-expan}
\Big|\nabla^j \big(w(x) - W(x)\big)\Big| = O(d(x)^{n-j}) \quad \text{ as } d(x) \rightarrow 0.
\end{equation}
Finally, for any $s, j \geq 0$ and $\va > 0$, we have
\begin{equation}\label{eq:u-expan-b}
|\nabla_T^s  \nabla^j (w - W)| = O(d^{n-\va - j})  \text{ as } d(x) \rightarrow 0.
\end{equation}

\end{prop}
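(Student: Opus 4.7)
The plan is to first establish the $C^0$ bound $|w - W| \leq C d^n$ near $\partial\Omega$ by combining the barrier construction of Lemma~\ref{lem:Bar1} with the comparison principle for continuous viscosity solutions from \cite{LiNgWang18-CVPDE, GonLiNg}, and then to upgrade it to the smoothness of $w$ in $\{0 < d < \delta_1\}$ and the derivative decay \eqref{eq:u-expan}, \eqref{eq:u-expan-b} via a rescaling argument based on Savin's small perturbation theorem \cite{Savin07-CPDE}, followed by standard Evans--Krylov / Schauder bootstrap.

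For the $C^0$ step, fix $\theta \in (n, n+1)$. Lemma~\ref{lem:Bar1} provides, for each small $\delta > 0$ and each $\beta \geq 1$ with $\beta \delta^n \leq 1$, the two functions $W \pm \beta(d^n - d^\theta)$ as strict one-sided barriers for \eqref{eq:main-2a} in $\Omega \setminus \Omega_\delta$. Using that $w - W$ has a known modulus of continuity on the inner layer $\{d = \delta\}$ coming from the Lipschitz estimate of \cite{GonLiNg}, one can calibrate $\beta$ so that these barriers bracket $w$ on $\partial(\Omega \setminus \Omega_\delta)$, and the comparison principle then yields $|w - W| \leq C d^n$ uniformly near $\partial\Omega$.

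For the regularity upgrade, I follow the setup of Subsection~\ref{SSec:2-1}: at each $x_0 \in \partial\Omega$, work in boundary-adapted coordinates with $x_0 = 0$ and the $x_n$-axis pointing into $\Omega$, and introduce the half-space reference $h(x) = -\ln x_n$, which satisfies $G(h) = 0$ in $\{x_n > 0\}$. The perturbation $\hat w := w - h$ satisfies the scale-invariant equation $\hat G[\hat w] = 0$; from the $C^0$ step and the explicit form of $W$, one has $|\hat w(x)| \leq C x_n$ near $\partial\Omega$. Setting $\hat w_r(y) := \hat w(y/r)$, the scale invariance gives $\hat G[\hat w_r] = 0$ with $\|\hat w_r\|_{L^\infty(B_{1/2}(e_n))} \leq C/r$, which falls below the threshold of Savin's small perturbation theorem for $r$ large; that theorem returns $\|\hat w_r\|_{C^{2,\alpha}(B_{1/4}(e_n))} \leq C/r$, and unscaling gives a quantitative $C^{2,\alpha}$ bound on $w$ near $\partial\Omega$. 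Since the equation is now uniformly elliptic and smooth in the interior, Evans--Krylov and Schauder bootstrap give $w \in C^\infty(\{0 < d < \delta_1\})$. To obtain the sharp decay \eqref{eq:u-expan}, apply interior Schauder estimates to $w - W$ on balls of radius $\sim d(\bar x)$ around each point $\bar x$ with $d(\bar x)$ small: the equation for $w - W$ is uniformly elliptic at this scale with right-hand side controlled via $G(W) \in \mathcal{W}^n$, so $|w - W| \leq C d(\bar x)^n$ on the ball upgrades to $|\nabla^j(w - W)(\bar x)| \leq C d(\bar x)^{n-j}$. The $C^{n-1,\gamma}$ regularity of $w + \ln d$ up to the boundary then drops out from the explicit form of $W + \ln d$ (smooth polynomial part plus a $d^n \ln d$ term of class $C^{n-1,\gamma}$) together with \eqref{eq:u-expan}. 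Finally, \eqref{eq:u-expan-b} follows by commuting $\nabla_T^s$ through the equation and repeating the rescaling; the only source of $\varepsilon$-loss is $\nabla_T^s$ acting on the $d^n \ln d$ term of $W$, producing at worst a factor $|\ln d|^{O(1)} = O(d^{-\varepsilon})$.

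The main obstacle is verifying the hypotheses of Savin's small perturbation theorem for the rescaled equation $\hat G$: one must check uniform ellipticity and sufficient smoothness of $\hat G$ in a neighborhood of the trivial solution, and track the explicit $x_n$-dependence inherited from the passage $S(w) \mapsto \hat S(\hat w)$. Once this setup is in place, the rescaling and bootstrap are fairly standard, with only the $\varepsilon$-loss in \eqref{eq:u-expan-b} requiring a little additional care.
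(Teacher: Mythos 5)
Your treatment of the $C^0$ bound, the Savin rescaling step, the interior smoothness, and the sharp estimate \eqref{eq:u-expan} follows the paper's proof essentially verbatim (the paper calibrates the barriers on $\{d=\delta\}$ using \eqref{eq:main-2c} rather than a Lipschitz modulus, and handles the blow-up of both $w$ and $W$ at $\partial\Omega$ by comparing with $w\pm\varepsilon$ on $\Omega_{\hat\delta(\varepsilon)}\setminus\Omega_\delta$ before letting $\varepsilon\to 0$; these are minor points). The genuine gap is in your argument for \eqref{eq:u-expan-b}. ``Commuting $\nabla_T^s$ through the equation and repeating the rescaling'' cannot yield \eqref{eq:u-expan-b}: the rescaled Schauder argument on balls of radius comparable to $d$ costs a full power of $d$ per derivative, tangential or not, so it only gives $|\nabla_T^s\nabla^j(w-W)|=O(d^{n-s-j})$. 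The whole content of \eqref{eq:u-expan-b} is that each tangential derivative costs only $d^{-\va}$, i.e.\ a gain of almost a full power of $d$ per tangential derivative over what rescaling provides, and this does not come for free. Your diagnosis of the $\va$-loss is also off: since $\nabla_T d=0$, $\nabla_T^s$ applied to $(c_{n,1}\circ\pi)\,d^n\ln d$ only differentiates the smooth coefficient and is harmless; the $d^n\ln d$ term of $W$ is not the obstruction.

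What is missing is the mechanism of the paper's Step 4. For a tangential field $Y_1$ orthogonal to and commuting with $\nabla d$, one differentiates the linearized identity \eqref{Eq:04XII21-E1} and uses the structure of the coefficients (from \eqref{Eq:SWmRec} together with \eqref{eq:u-expan}) to show that $\chi:=\partial_{Y_1}(w-W)$ satisfies a degenerate elliptic equation whose leading part is $L=d^2\Delta-(n-2)\,d\,\nabla d\cdot\nabla-n$, with right-hand side $O(d^{n+1-\va})$ because tangential differentiation does not lower the power of $d$ in $G(W)\in\mathcal{W}^n$. Since the indicial roots of $L$ in the normal variable are $-1$ and $n$, the function $d^{n-1+\gamma}$, $\gamma\in(0,1)$, is an admissible barrier, and the maximum principle upgrades the a priori bound $\chi=O(d^{n-1})$ coming from \eqref{eq:u-expan} to $\chi=O(d^{n-1+\gamma})$; the residual $\va$-loss comes from the logarithmic terms in $\mathcal{W}^n$ and from the fact that $n$ is itself an indicial root, which forbids $\gamma=1$. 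Only after this gain for $s=1$, $j=0$ can one feed the improved tangential bound back into the rescaled Schauder argument of Step 3 to control mixed derivatives, and then iterate in $s$. Without this indicial-root/barrier argument your proposal does not establish \eqref{eq:u-expan-b}; the remainder of the proposal matches the paper.
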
 

\begin{proof}

\medskip
\noindent\underline{Step 1:} We prove that
\begin{equation}
|w(x) - W(x)| = O(d(x)^n) \text{ as } d(x) \rightarrow 0.
	\label{eq:uc0exp}
\end{equation}
This gives \eqref{eq:u-expan} for $j = 0$.

Let $\theta=n+1/2$, by Lemma \ref{lem:Bar1}, there exists $0<\tilde \delta_1\le \delta_0/2$, depending only on $n,k$ and $\om$,  such that for any positive  constants $0<\delta\le \tilde\delta_1$ and $A\ge 1$  with $A \delta^n \le 1 $, there hold  
\begin{equation*} 
G(W+ A (d^n-d^{\theta})) <0 \quad \mbox{in }\om \setminus \om_\delta
\end{equation*} 
and 
\begin{equation*}
G(W-A (d^n-d^{\theta})) >0 \quad \mbox{in }\om \setminus \om_\delta.
\end{equation*}

By \eqref{eq:main-2c}, we can find a small $0 < \delta < \min(\tilde\delta_1,1)$ such that $|w - W| < \frac{1}{10}$ in $\Omega \setminus \Omega_\delta$ and $\delta^n - \delta^\theta \geq \frac{1}{2} \delta^n$. We then select $A = \delta^{-n}$ so that $A \geq 1$, $A\delta^n = 1$ and
\[
-A(d^n-d^\theta)\le w-W\le A (d^n-d^{\theta}) \quad \mbox{on }\pa \om_\delta. 
\]
 
Note that for every $\varepsilon >0$, $w+\varepsilon$ and $w-\varepsilon$  are respectively super- and sub-solutions of \eqref{eq:main-2a} in $\Omega$. By \eqref{eq:main-2c}, we have 
\[
W -A(d^n-d^\theta) \le w+\varepsilon , \quad w-\varepsilon \le W+A(d^n-d^\theta) \quad  \mbox{ in } \Omega \setminus \Omega_{\hat \delta(\varepsilon)},
\] 
where $\hat \delta(\varepsilon) \to 0$ as  $\varepsilon \to 0$. By the comparison principle (see \cite[Theorem 1.3]{LiNgWang18-CVPDE}), we have 
\[
W-A(d^n-d^\theta)   \le w+\varepsilon , \quad w-\varepsilon \le W+A(d^n-d^\theta)  \quad \mbox{in  }\Omega_{\hat \delta(\varepsilon)} \setminus \Omega_\delta \text{ and hence in } \Omega \setminus \Omega_\delta. 
\] 
Sending $\varepsilon $ to $0$, we obtain \eqref{eq:uc0exp} and conclude Step 1.

\medskip
\noindent\underline{Step 2:} We prove that $w$ is smooth in $\Omega \setminus \Omega_{\delta_1}$ for some $\delta_1 \in (0,\tilde\delta_1)$. This will be achieved by using \eqref{eq:uc0exp} to recast \eqref{eq:main-1a} near $\partial\Omega$ in a form suitable to apply a $C^{2,\alpha}$ result of Savin \cite{Savin07-CPDE} for small perturbation solutions for elliptic equations.

Consider a point $x_0 \in \partial\Omega$, which is taken without loss of generality as the origin, and set up coordinate axes so that the $x_n$ axis point toward $\Omega$ and the axes $x_1, \ldots, x_{n-1}$ are along direction tangential to $\partial\Omega$. Since $\partial\Omega$ is smooth, there exists $\varrho_0 > 0$ depending only on $\partial\Omega$ such that, after possibly shrinking $\tilde\delta_1$, the wedges 
\[
\Omega_{x_0,\varrho,\delta} := \{x \in \mathbb{R}^n: |x'| < \delta,  \varrho |x'| < x_n < \delta\}
\]
are subsets of $\Omega$ for $\varrho > \varrho_0$ and $0 < \delta < \delta_1$. It suffices to show that, for some $\delta_1 \in (0,\tilde\delta_1)$ independent of $x_0$, $w$ is $C^{2,\alpha}$ regular in $\Omega_{x_0, 2\varrho_0,\delta_1}$, as smoothness follows from elliptic regularity theories.

It is important to observe that the function $h(x) := -\ln x_n$ satisfies $G(h) = 0$ in $\{x_n > 0\}$ and hence in $\Omega_{x_0,\varrho_0,\delta_0}$. Consider the function
\[
\hat w(x) = -h(x) + w(x) = \ln x_n + w(x)
\]
and define 
\[
\hat S(\hat w)(x)
	:= x_n^2 S(w)(x) 
	= x_n^2 S(\hat w)(x)
		- x_n (e_n \otimes \nabla \hat w +\nabla \hat w \otimes  e_n)
		+ x_n e_n \partial_n \hat w I
		- \frac{1}{2} I.
\]
Since $x_n^{2k}[\sigma_k(\lambda(-S(w)) - N_k e^{2kw}]	= 0$  in $\Omega_{x_0,\varrho_0,\delta_0}$ in the viscosity sense, we have
\[
\hat G(\nabla^2 \hat w, \nabla \hat w, \hat w, x)
	:= \sigma_k (\lambda(-\hat S(\hat w)) - N_k e^{2k \hat w} 
		= 0 \text{ in } \Omega_{x_0,\varrho_0,\delta_0} \text{ in the viscosity sense.}
\]
Clearly $\hat G$ is smooth and, as $G(h) = 0$, $0$ is a solution for $\hat G$, i.e. $\hat G(0,0,0,x) = 0$. Note that $\hat G$ has the following scaling property: If we define $\hat w_r(x) = \hat w(x/r)$ for $r > 0$ and $x \in \Omega_{x_0,\varrho_0, r\tilde\delta_1}$, then $\hat S(\hat w_r)(x) = \hat S(\hat w)(x/r)$ and 
\[
\hat G(\nabla^2 \hat w_r(x), \nabla \hat w_r(x), \hat w_r(x), x)
	= \hat G(\nabla^2 \hat w(x/r), \nabla \hat w(x/r), \hat w(x/r), x/r) = 0 \text{ in } \Omega_{x_0,\varrho_0, r\tilde\delta_1}.
\]
In particular, for $r > 2\tilde\delta_1^{-1}$,
\[
\hat G(\nabla^2 \hat w_r(x), \nabla \hat w_r(x), \hat w_r(x), x)
	= 0 \text{ in } \{x \in \Omega_{x_0,\varrho_0, r\tilde\delta_1}: |x_n - 1| < 1/2\}.
\]
Now observe that $\hat G$ is uniformly elliptic near zero, i.e. there exist $\eta_0 > 0$ and $C_0 > 1 $ depending only on $n$ and $k$ such that, for $\|M\| + |p| + |z| < \eta_0$ and $x \in \Omega_{x_0,\varrho_0, r\tilde\delta_1}$ with $r > 2\tilde\delta_1^{-1}$ and $|x_n - 1| < 1/2$,
\[
\frac{1}{C_0} I \leq \left(\frac{\partial \hat G(M,p,z,x)}{\partial M_{ij}}\right) \leq C_0I.
\]
Moreover, by \eqref{eq:uc0exp}, we have $|\hat w(x)| \leq C_1x_n$ in $\Omega_{x_0,\varrho_0, \tilde\delta_1}$ and so
\[
|\hat w_{r}(x)| \leq \frac{C_1}{r} \text{ in } \{x \in \Omega_{x_0,\varrho_0, r\tilde\delta_1}: |x_n - 1| < 1/2\},
\]
for some constant $C_1$ independent of $x_0$ and $r$. Therefore, by a $C^{2,\alpha}$ result of Savin \cite[Theorem 1.3]{Savin07-CPDE} for small perturbation solutions for elliptic equations, for every $\eta_1 \in (0,\eta_0)$, we can select $r_1$ sufficiently large, depending only on $n, k, \eta_1$, $C_0$ and $C_1$, such that
\[
\|\hat w_{r}\|_{C^{2,\alpha}(\{x \in \Omega_{x_0,2\varrho_0, r\tilde\delta_1}: |x_n - 1| < 1/4\})} \leq \eta_1 \text{ for all } r > r_1.
\]
Thus, with $\delta_1 = \tilde\delta_1/r_1$, we have $w \in C^{2,\alpha}_{\rm loc}(\Omega \setminus \Omega_{\delta_1})$. Moreover, 
\begin{equation}
|\nabla^j (w - \ln d)(x)| \leq (1 + O(d(x))\eta_1\,d(x)^{-j} \text{ for all } x \in \Omega \setminus \Omega_{\delta_1}, j = 0, 1, 2,
\label{Eq:w2Savinest}
\end{equation}
which finishes Step 2.

\medskip
\noindent\underline{Step 3:} We prove \eqref{eq:u-expan} for $j \geq 1$. Note that this implies that $w + \ln d \in C^{n-1,\gamma}(\overline{\Omega \setminus \Omega_{\delta_1}})$ for every $\gamma \in (0,1)$.

For a function $v$, we write $F(\nabla^2 v, \nabla v, v) := \sigma_k(\lambda(-S(v))) - N_k e^{2kv}$. Fix an arbitrary ball $B(x_0, 2r_0) \subset \Omega \setminus \Omega_{\delta_1}$. For $x \in B(0,1)$, define
\begin{align*}
w_{x_0} (x) 
	&= \ln (r_0) + w(x_0 + r_0 x) ,\\
W_{x_0} (x) 
	&= \ln (r_0) + W(x_0 + r_0 x).
\end{align*}
Then
\begin{align*}
F(\nabla^2w_{x_0}, \nabla w_{x_0}, w_{x_0}) = 0, \quad \lambda(-S(w_{x_0})) \in \Gamma_k \quad \text{ in } B(0,1),\\
F(\nabla^2 W_{x_0}, \nabla W_{x_0}, W_{x_0}) = f_{x_0},  \quad \lambda(-S(W_{x_0})) \in \Gamma_k \quad \text{ in } B(0,1).
\end{align*}
Subtracting these equations, we obtain
\[
L_{x_0} (w_{x_0} - W_{x_0}) = -f_{x_0}  \text{ in } B(0,1)
\]
where $L_{x_0} = \sum_{i,j=1}^na_{x_0}^{ij}(x)\pa_{ij}^2+\sum_{i=1}^n b_{x_0}^i(x) \pa_i +c_{x_0}(x)$ is a uniform elliptic operator with 
\begin{align*}
a_{x_0}^{ij}
	&=\int_{0}^1 \frac{\pa }{\pa M_{ij}} F(\nabla^2 \xi_{t,x_0}, \nabla \xi_{t,x_0}, \xi_{t,x_0})\,\ud t,\\
b_{x_0}^{i}
	&=\int_{0}^1 \frac{\pa }{\pa p_{i}} F(\nabla^2 \xi_{t,x_0}, \nabla \xi_{t,x_0}, \xi_{t,x_0} )\,\ud t,\\
c_{x_0}
	&=\int_{0}^1 \frac{\pa }{\pa z} F(\nabla^2 \xi_{t,x_0}, \nabla \xi_{t,x_0}, \xi_{t,x_0} )\,\ud t,\\
\xi_{t,x_0} 
	&= t w_{x_0} + (1-t)W_{x_0}.
\end{align*}
Now, by \eqref{Eq:w2Savinest} and the fact that $-d^2S(-\ln d) = \frac{1}{2}I + O(d)$, we have that the ellipticity of $L_{x_0}$ in $B(0,1/2)$ is uniform with respect to $x_0 \in \Omega \setminus \Omega_{\delta_1}$. Moreover, by \eqref{eq:uc0exp},
\[
|w_{x_0} - W_{x_0}| \leq Cr_0^n \text{ in } B(0,1),
\]
and, since $G(W) \in \mathcal{W}^n$, $f_{x_0}$ satisfies
\begin{equation}
|\nabla^j f_{x_0}|\leq C_{j} r_0^{n} \text{ in } B(0,1/2)
	\label{Eq:04XII21-M1}
\end{equation}
for some constants $C$ and $C_{j}$ which are independent of $x_0 \in \Omega \setminus \Omega_{\delta_1}$. Therefore, by Schauder's estimates, we have for $j \geq 1$ that
\[
|\nabla^j(w_{x_0} - W_{x_0})| \leq C_j'r_0^n \text{ in } B(0,1/4).
\]
where the constants $C_j'$ are also independent of $x_0$. Returning to $w$, we obtain that
\[
|\nabla^j(w - W)| = O(d^{n-j}) \text{ in } \Omega \setminus \Omega_{\delta_1},
\]
which proves \eqref{eq:u-expan}. Clearly, this implies that $w - W \in C^{n-1,1}(\overline{\Omega \setminus \Omega_{\delta_1}})$ and so $w + \ln d \in C^{n-1,\gamma}(\overline{\Omega \setminus \Omega_{\delta_1}})$ for every $\gamma \in (0,1)$.

\medskip
\noindent\underline{Step 4:} We prove \eqref{eq:u-expan-b}.

Consider first the case $s = 1$ and $j=0$, namely, we show that $|\partial_{Y_1} (w - W)| = O(d^{n-\va})$ for any $\va \in (0,1)$ and any smooth vector field $Y_1$ with $Y_1 \cdot \nabla d = 0$. Since the space of vector fields orthogonal to $\nabla d$ is generated by those which are orthogonal to and commute with $\nabla d$, we may assume that $[Y_1, \nabla d] = 0$ in addition to $Y_1 \cdot \nabla d = 0$.
We write
\begin{equation}
-G(W) = G(w) - G(W) = d^2 \sum_{i,j=1}^n a_{ij} \partial_{ij} (w - W) + d \sum_{i=1}^n b_i \partial_i(w - W) + c(w - W)
	\label{Eq:04XII21-E1}
\end{equation}
where
\begin{align*}
a_{ij} 
	&= \int_0^1 \frac{\partial\sigma_k}{\partial S_{ij}}(-d^2 S(tw + (1-t)W))\,\ud t,\\
b_{i} 
	&= - 2\sum_{j=1}^n \int_0^1 \frac{\partial\sigma_k}{\partial S_{ij}}(-d^2 S(tw + (1-t)W)) d\partial_j(tw + (1-t)W) \,\ud t\\
		&\quad + \int_0^1 \sum_{p,q=1}^n \frac{\partial\sigma_k}{\partial S_{pq}}(-d^2 S(tw + (1-t)W)) d\partial_i(tw + (1-t)W) \,\ud t,\\
c
	&= - 2k N_k d^{2k} \int_0^1 e^{2k(tw + (1-t)W)} \,\ud t.
\end{align*}
Recall that, by \eqref{Eq:SWmRec}, $-d^2S(W)$ takes the form
\[
-d^2S_{ij}(W) = \frac{1}{2}\delta_{ij} + \sum_{p=1}^{n-1} s_{ij,p} \circ \pi d^p + \textrm{err}_{n-1}
\]
where $s_{ij,p} \in C^\infty(\partial\Omega)$ and where $\textrm{err}_{n-1} \in \mathcal{W}^{n-1}$. This together with \eqref{eq:u-expan} and the fact that $[Y_1, \nabla d] = 0$ implies that
\begin{align*}
&a_{ij} 
	= 2^{-(k-1)}\tbinom{n-1}{k-1}(\delta_{ij} + O(d)),\\
&b_i 
	= - 2^{-(k-1)}\tbinom{n-1}{k-1} (n-2) (\partial_i d  + O(d)), \\
&c 
	= 2^{-(k-1)}\tbinom{n-1}{k-1} n (1 + O(d)),\\
&|\partial_{Y_1} a_{ij}| + |[Y_1, \sum_i b_i \partial_i]| + |\partial_{Y_1} c| 
	= O(d).
\end{align*}
Also, since $G(W) \in \mathcal{W}^{n}$, $|\partial_{Y_1} G(W)| = O(d^{n+\va})$. Therefore, by differentiating \eqref{Eq:04XII21-E1} and using once again \eqref{eq:u-expan}, we see that the function $\chi := \partial_{Y_1} (w - W)$ satisfies
\[
L\chi := d^2 \Delta \chi - (n-2) d \nabla d \cdot \nabla \chi
		 - n  \chi = O(d^{n+1-\va}) \text{ and } \chi = O(d^{n-1}).
\]
By a straightforward computation and after possibly shrinking $\delta_1$, we have for small $\va > 0$ that $L(d^{n-2}) \leq 0$ and $L(d^{n-1+\gamma}) \leq - \frac{1}{C} d^{n+\gamma}$ in $\Omega \setminus \Omega_{\delta_1}$. A simple application of the maximum principle thus gives that $|\chi| = O(d^{n-1+\gamma})$. This proves \eqref{eq:u-expan-b} for $s = 1$ and $j = 0$.

To proceed, note that the fact that $G(W) \in \mathcal{W}^n$ in fact gives an estimate stronger than \eqref{Eq:04XII21-M1}, namely
\[
|\nabla_T^s \nabla^j  f_{x_0}| \leq C_{j,s} r_0^{n+s}, \quad \text{ for } s, j \geq 0,
\]
where, by a slight abuse of notation, $\nabla_T$ now denotes the gradient along directions orthogonal to $\nabla d(x_0 + K_0^{-1} r_0 \cdot)$. Also, estimate \eqref{eq:u-expan-b} for $s = 1$ and $j = 0$ gives
\[
|\nabla_T (w_{x_0} - W_{x_0})| \leq Cr_0^{n+1-\va}.
\]
Therefore, the argument in Step 3 in fact gives
\[
|\nabla_T \nabla^j  (w_{x_0} - W_{x_0})| \leq C_j'' r_0^{n+1-\va},
\]
which up on returning to $w$ gives \eqref{eq:u-expan-b} for $s = 1$ and all $j \geq 0$. We may then repeatedly differentiate \eqref{Eq:04XII21-E1} and apply the above argument to obtain \eqref{eq:u-expan-b} for all $s \geq 1, j \geq 0$. 
\end{proof}

\subsection{Step 3: The coefficient function $c_{n,0}$ and the rest of the proof}

The next step in the proof of Theorem \ref{thm:mainX} is to find $W_n = W + (c_{n,0} \circ \pi)d^n$ where $W$ is given by Proposition \ref{prop:PreOpExp} such that $w - W_n = o(d^n)$ near $\partial\Omega$. Once this is done, $W_n$ satisfies the condition of Lemma  \ref{lem:Wm-1Wm} with $m = n+1$ (by Proposition \ref{prop:expansion-1}), and we can resume the application of Lemma \ref{lem:Wm-1Wm} to construct higher order approximations $W_{n+1}, W_{n+2}, \ldots$ of $w$. We introduce some notations. For small $\delta$, set $\pi_\delta = \pi|_{\partial\Omega_\delta}:\partial\Omega_\delta  \rightarrow \partial\Omega$ and let $\pi_\delta^{-1}: \partial\Omega  \rightarrow \partial\Omega_\delta$ be its inverse. Define 
\begin{equation}
c_{\delta} = \frac{1}{\delta^n} (w - W)\big|_{\partial\Omega_\delta} \circ \pi_\delta^{-1}: \partial\Omega \rightarrow \mathbb{R}.
	\label{Eq:cdeltaDef}
\end{equation}
By estimate \eqref{eq:u-expan} in Proposition \ref{prop:PreOpExp}, the family $\{c_\delta\}$ is bounded in $C^0(\partial\Omega)$ as $\delta \rightarrow 0$. However, because of the loss of $d^{-\varepsilon}$ in estimate \eqref{eq:u-expan-b} we do not know yet the boundedness of $\{c_\delta\}$ in stronger norms. This is addressed in the next lemma.

\begin{lem}\label{lem:cn0Birth}
Let $\Omega$ be a bounded domain in $\mathbb{R}^n$, $n \geq 3$, with smooth boundary $\partial\Omega$. Let $c_1 = c_{1,0},\ldots, c_{n-1} = c_{n-1,0}, c_{n,1}$ and $W$ be given by Proposition \ref{prop:PreOpExp}, and let $c_\delta$ be defined by \eqref{Eq:cdeltaDef}. Then there exists $c_{n,0} \in C^\infty(\partial\Omega)$ such that $c_\delta$ converges to $c_{n,0}$ in $C^j(\partial\Omega)$ for all $j \geq 0$ as $\delta \rightarrow 0$, and, for any $\gamma \in (0,1)$ and $s \geq 0$,
\begin{equation}
|\nabla_T^s (c_\delta - c_{n,0})|  = O(\delta^\gamma) \text{ for small } \delta > 0.
	\label{Eq:cdrate}
\end{equation}
\end{lem}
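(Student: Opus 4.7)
Set $\chi := w - W$. By the linearization computation in Step 4 of the proof of Proposition~\ref{prop:PreOpExp}, after dividing through by the positive normalizing constant $2^{-(k-1)}\tbinom{n-1}{k-1}$, the function $\chi$ satisfies
\[
\mathcal L\chi := d^2\Delta\chi - (n-2)\,d\,\nabla d\cdot\nabla\chi - n\chi = \mathcal R,
\]
where $\mathcal R$ combines $-G(W)$ with the $O(d)$ correction terms arising from the deviations of the linearization coefficients $a_{ij}, b_i, c$ from their leading values $2^{-(k-1)}\tbinom{n-1}{k-1}\,(\delta_{ij},-( n-2)\partial_i d, n)$. Since $G(W)\in\mathcal W^n$ by Proposition~\ref{prop:expansion-1}, and since the corrections can be bounded using \eqref{eq:u-expan}--\eqref{eq:u-expan-b}, one checks that
\[
|\nabla_T^s\mathcal R| = O(d^{n+\gamma})\qquad\text{for every }s\geq 0\text{ and }\gamma\in(0,1).
\]

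\textbf{Radial ODE and $C^0$-convergence.} Work in the Fermi coordinates $(y,t)\leftrightarrow x = y + t\nu(y)$, where $\nu(y)$ is the inward unit normal to $\partial\Omega$ at $y$. Then $\nabla d = \partial_t$, the integral curves of $\nabla d$ are straight segments, and $\Delta = \partial_t^2 + H(y,t)\partial_t + \Delta_{g(y,t)}$, where $\Delta_{g(y,t)}$ is the Laplacian of the induced metric on the level set $\partial\Omega_t$. Setting $u(y,t) := \chi(y + t\nu(y))$ and $\Phi(y,t):=y+t\nu(y)$, the equation for $\chi$ rewrites as
\[
t^2\,\partial_t^2 u - (n-2)\,t\,\partial_t u - n\,u \;=\; \tilde{\mathcal R}(y,t),
\]
where $\tilde{\mathcal R} := \mathcal R\circ\Phi - t^2 H\partial_t u - t^2\Delta_{g}u - (\text{corrections})$ satisfies $|\tilde{\mathcal R}| = O(t^{n+\gamma})$ by Paragraph~1 and \eqref{eq:u-expan-b}. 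The Euler-type operator on the left has characteristic exponents $n$ and $-1$; writing $u = t^n\phi$ transforms the equation into the exact identity
\[
\bigl(t^{n+2}\,\partial_t\phi\bigr)_t \;=\; \tilde{\mathcal R}(y,t).
\]
Since $t^{n+2}\partial_t\phi = t^2\partial_t u - n t\,u = O(t^{n+1}) \to 0$ as $t \to 0^+$ by \eqref{eq:u-expan}, integrating yields
\[
\partial_t\phi(y,t) = t^{-(n+2)}\int_0^t\tilde{\mathcal R}(y,s)\,\ud s = O(t^{-1+\gamma})
\]
uniformly in $y$. Hence $\phi(y,\cdot)$ is Cauchy as $t\to 0^+$; define $c_{n,0}(y) := \lim_{t\to 0^+}\phi(y,t)$. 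Since $c_\delta(y) = \phi(y,\delta)$, integrating $\partial_t\phi$ from $0$ to $\delta$ gives the $s=0$ case of \eqref{Eq:cdrate}.

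\textbf{Tangential smoothness and main obstacle.} For $s \geq 1$, extend smooth tangent vector fields on $\partial\Omega$ along the $\nabla d$-flow to smooth vector fields $Y_1,\ldots,Y_s$ near $\partial\Omega$, which satisfy $Y_j\cdot\nabla d = 0$ and $[Y_j,\nabla d] = 0$ (in Fermi coordinates, these are $y$-vector fields). Setting $Y := Y_{i_1}\cdots Y_{i_s}$ and applying $Y$ to $\mathcal L\chi = \mathcal R$, the commutator reduces to $[Y,\mathcal L]\chi = d^2[Y,\Delta]\chi$, which, by the decomposition of $\Delta$ in Fermi coordinates and \eqref{eq:u-expan-b}, is $O(d^{n+\gamma})$; hence
\[
\mathcal L(Y\chi) = Y\mathcal R - [Y,\mathcal L]\chi = O(d^{n+\gamma}).
\]
By \eqref{eq:u-expan-b}, $Y\chi = O(d^{n-\varepsilon})$ and $\partial_t(Y\chi) = O(d^{n-1-\varepsilon})$, so with $\phi_Y := (Y\chi)\circ\Phi/t^n$ one still has the vanishing $t^{n+2}\partial_t\phi_Y = t^2\partial_t(Yu) - n t\,(Yu) = O(t^{n+1-\varepsilon}) \to 0$. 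The ODE integration of Paragraph~2 then gives $\partial_t\phi_Y = O(t^{-1+\gamma})$ and a uniform-in-$y$ limit $c^Y(y) := \lim_{t\to 0^+}\phi_Y(y,t)$ with rate $O(t^\gamma)$. Because $Y$ commutes with $\partial_t$ in Fermi coordinates, $\phi_Y = Y_y\phi$; combined with the uniform $C^0$-convergence $\phi\to c_{n,0}$ from Paragraph~2, this identifies $c^Y = Y c_{n,0}$, showing $c_{n,0} \in C^s(\partial\Omega)$ and yielding $|\nabla_T^s(c_\delta - c_{n,0})| = O(\delta^\gamma)$. As $s$ is arbitrary, $c_{n,0}$ is smooth. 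The principal obstacle is the $d^{-\varepsilon}$ loss in the tangential estimates of \eqref{eq:u-expan-b}, which forbids deducing even $C^1$-convergence of $c_\delta$ directly from derivative bounds on $\chi$. The radial ODE above circumvents this loss by exploiting the non-resonance of the characteristic exponents $n$ and $-1$ of $\mathcal L$ with the boundary behavior $\chi = O(d^n)$, and this structure is preserved under tangential differentiation, absorbing the $\varepsilon$-losses into the error $\tilde{\mathcal R}_Y$ that integrates harmlessly against the factor $t^{-(n+2)}$.
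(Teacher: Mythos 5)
Your proposal is correct and takes essentially the same route as the paper's proof: both differentiate the linearized equation for $w-W$ tangentially, restrict along the integral curves of $\nabla d$ where the PDE collapses to the Euler-type ODE with exponents $n$ and $-1$ (the tangential second derivatives being harmless by \eqref{eq:u-expan-b}), set $\xi = d^{-n}\varphi$, and integrate $(t^{n+2}\xi')' = O(t^{n+1-\va})$ to get a Cauchy limit with rate $O(\delta^{\gamma})$. The differences (global Fermi coordinates and explicit commutators with the constant-coefficient model operator, plus the explicit identification of the limits of tangential derivatives with derivatives of $c_{n,0}$, versus the paper's pointwise principal coordinates and perturbed-coefficient equation) are purely presentational.
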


\begin{proof} 
Fix some $\va \in (0,1)$. Observe that if $Y_1, \ldots, Y_s$ are vector fields which are orthogonal to and commute with $\nabla d$, we have as in Step 4 of the proof of Proposition \ref{prop:PreOpExp} that the function $\varphi := \partial_{Y_1} \cdots \partial_{Y_s}(w - W)$ satisfies
\[
d^2 \sum_{i,j=1}^n(\delta_{ij} + O(d)) \partial_{ij} \varphi - (n-2) d \sum_{i=1}^n(\partial_i d + O(d)) \partial_i \varphi - n (1 + O(d)) \varphi = O(d^{n+1-\va}).
\]
In the coordinate system in which \eqref{eq:distance} holds, along the $x_n$-axis, we have by \eqref{eq:u-expan-b} that $|\nabla_{x'}^\ell \varphi_{m_0 + 1}(0,x_n)| = O(d^{n-\va})$ for $\ell \geq 0$. Hence, by \eqref{Eq:04XII21-E1}, along the $x_n$-axis, $\varphi = w - W$ satisfies the ODE
\[
x_n^2 \partial_n^2 \varphi(0,x_n) - (n-2) x_n \partial_n \varphi (0,x_n)
		 - n \varphi = O(x_n^{n+1-\va}).
\]
Also by \eqref{eq:u-expan}, we have $|\partial_n^\ell \varphi(0,x_n)| = O(x_n^{n-\ell})$ for $\ell \geq 0$. Thus, introducing $\xi(x_n) := x_n^{-n} \varphi(0,x_n)$, we have
\[
(x_n^{n+2} \xi')' = O(x_n^{n+1-\va}) \quad\text{ and } \quad |\xi| + x_n\,|\xi'| = O(1).
\]
Integrating, we obtain $\xi'(x_n) = O(x_n^{-\va})$, which gives
\[
|\partial_{Y_1} \cdots \partial_{Y_s} c_{x_n}(0) - \partial_{Y_1} \cdots \partial_{Y_s} c_{\tilde x_n}(0)| = |\xi(x_n) - \xi(\tilde x_n)| \leq C |x_n^{1-\va} - \tilde x_n^{1-\va}|.
\]
The conclusion follows.
\end{proof}

\begin{proof}[Proof of Theorem \ref{thm:mainX}] Let $c_1 = c_{1,0}, \ldots, c_{n-1} = c_{n-1,0}, c_{n,1}$ and $W$ be given by Proposition \ref{prop:PreOpExp}  and $c_{n,0}$ be given by Lemma \ref{lem:cn0Birth}. Fix some $\gamma \in (0,1)$. By Proposition \ref{prop:PreOpExp} there exists $\delta_1 > 0$ such that $w + \ln d \in C^\infty(\Omega \setminus \Omega_{\delta_1}) \cap C^{n-1,\gamma}(\overline{\Omega \setminus \Omega_{\delta_1}})$. Let 
\[
W_n = W + c_{n,0}d^n.
\]
By Proposition \ref{prop:expansion-1}, we have that $\lambda(-S(W_n)) \in \Gamma_k$ near $\partial\Omega$ and $G(W_n) \in \mathcal{W}^n$. Using Lemma \ref{lem:Wm-1Wm}, we can find a sequence $\{N_p\}_{p \geq n+1}$ and coefficients $\{c_{p,q}\}_{p \geq n+1, 0 \leq q \leq N_p}$ such that the functions
\[
W_m = W_n + \sum_{p=n+1}^m \sum_{q=0}^{N_p} c_{p,q} \circ \pi d^p (\ln d)^q, \quad m \geq n +1,
\]
satisfy $\lambda(-S(W_m)) \in \Gamma_k$ near $\partial\Omega$ and $G(W_m) \in \mathcal{W}^{m}$. Note also that, if $c_{n,1}$ vanishes in some $\partial\Omega \cap B(x_0,r_0)$, then it can be seen from the proof of Lemma \ref{lem:Wm-1Wm} that, in the expansion of $G(W_n)$, all coefficients carrying a non-trivial power of $\ln d$ actually vanish in $\partial\Omega \cap B(x_0,r_0)$, and so \eqref{Eq:CoefRecForm} implies that $c_{n+1,1}$, \ldots, $c_{n+1,N_{n+1}}$ also vanish in $\partial\Omega \cap B(x_0,r_0)$. Repeating this arguments shows that $c_{p,q} = 0$ in $\partial\Omega \cap B(x_0,r_0)$ for $p \geq n, q \geq 1$.

It remains to prove \eqref{Eq:YYwWmEst}. By estimate \eqref{Eq:cdrate}, for any smooth vector fields $s \geq 0$,
\[
|\nabla_T^s(w - W_n)| = O(d^{n+\gamma}).
\]
We may then argue as in Step 3 of the proof of Proposition \ref{prop:PreOpExp} to obtain for any $s, j \geq 0$ that
\[
|\nabla_T^s  \nabla^j (w - W_n)| = O(d^{n-j+\gamma}) .
\]
We thus have that \eqref{Eq:YYwWmEst} holds for $m = n$ and hence for all $m \leq n$. Suppose by induction that \eqref{Eq:YYwWmEst} has been established for some $m = m_0 \geq n$. Let us prove \eqref{Eq:YYwWmEst} for $m = m_0 + 1$. As in Step 4 of the proof of Proposition \ref{prop:PreOpExp}, if $Y_1, \ldots, Y_s$ are vector fields which are orthogonal to and commute with $\nabla d$, we have that $\varphi_{m_0+1} := \partial_{Y_1} \cdots \partial_{Y_s} (w - W_{m_0+1})$ satisfies
\begin{multline*}
d^2 \sum_{i,j=1}^n(\delta_{ij} + O(d)) \partial_{ij} \varphi_{m_0+1} - (n-2) d \sum_{i=1}^n(\partial_i d + O(d)) \partial_i \varphi_{m_0+1}\\
	 - n (1 + O(d)) \varphi_{m_0+1} = O(d^{m_0+1+\gamma}).
\end{multline*}
We then use the argument in the proof of Lemma \ref{lem:cn0Birth}. In the coordinate system in which \eqref{eq:distance} holds, along the $x_n$-axis, we have by \eqref{Eq:YYwWmEst} for $m = m_0$ that $|\nabla_{x'}^\ell \varphi_{m_0 + 1}(0,x_n)| = O(d^{m_0+\gamma})$ for $\ell \geq 0$, and so the above PDE reduces to an ODE:
\[
x_n^2 \partial_n^2 \varphi_{m_0+1}(0,x_n) - (n-2) x_n \partial_n \varphi_{m_0+1} (0,x_n)
		 - n \varphi_{m_0+1}  = O(d^{m_0+1+\gamma}).
\]
Also by \eqref{Eq:YYwWmEst} for $m = m_0$, we have $|\partial_n^\ell \varphi_{m_0 + 1}(0,x_n)| = O(d^{m_0-\ell+\gamma})$ for $\ell \geq 0$. Thus, introducing $\xi_{m_0+1}(x_n) := x_n^{-n} \varphi_{m_0+1}(0,x_n)$, we have
\[
(x_n^{n+2} \xi_{m_0+1}')' = O(d^{m_0+1+\gamma}) \quad\text{ and } \quad |\xi_{m_0+1}| + x_n\,|\xi_{m_0+1}'| = O(d^{m_0 - n +\gamma}).
\]
Integrating, we obtain $\xi_{m_0+1} = O(d^{m_0 + 1-n+\gamma})$, which gives $\varphi_{m_0+1}(0,x_n) = O(d^{m_0+1+\gamma})$. We may then argue as in Step 3 of the proof of Proposition \ref{prop:PreOpExp} to obtain for any $j \geq 0$ that
\[
|\partial_{Y_1} \cdots \partial_{Y_s} \nabla^j (w - W_{m_0+1})| = O(d^{m_0+1-j+\gamma}),
\]
which gives \eqref{Eq:YYwWmEst} for $m = m_0+1$. The proof is complete.
\end{proof}

\begin{proof}[Proof of the first statement of Theorem \ref{thm:mainfG}] 
The proof is essentially the same as that of Theorem \ref{thm:mainX} except for a few changes which we list below.
\begin{enumerate}[1.]
\item By a rescaling, we may assume that $f(1/2, \ldots, 1/2) = 1$. The expression of $G$ is modified to
\[
G(v) = (f(\lambda(S(-v))) - e^{2v})d^2.
\]

\item In Lemma \ref{lem:W0}, the conclusion is that $G(W_0) \in \mathcal{W}^0$ and the polyhomogeneous expansion of $G(W_0)$ has no logarithmic term, i.e. $G(W_0) = \sum_{p=0}^\infty (a_{p,0} \circ \pi) d^p$ in the sense of Definition \ref{def:polyhom} (instead of in the sense of a convergent power series). 

\item In the proof of Lemma \ref{lem:Wm-1Wm}, we need to estimate $f(\lambda(-d^2 S(W_m))) - f(\lambda(-d^2 S(W_{m-1})))$ differently since we cannot use \eqref{eq:sigma-k-fact}. Since $f(1/2, \ldots, 1/2) = 1$, $f$ is symmetric and homogeneous of degree one (see \eqref{fG3}), we have $\nabla f(1/2, \ldots, 1/2) = (2/n, \ldots, 2/n)$. Moreover, since $f$ is smooth near $\lambda = (1/2, \ldots, 1/2)$, we have for any $t \geq 1$ that
\begin{align}
f(\lambda(M))
	 &= 1 + \frac{2}{n} \sum_{1 \leq i \leq n} (M_{ii} - 1/2)\nonumber\\
		 &\quad+ \sum_{2 \leq |\alpha| \leq t} c_{\alpha} (M - \frac{1}{2}I)^\alpha + O(|M - \frac{1}{2}I|^{t+1})
	\label{Eq:24II22-1}
\end{align}
for symmetric $n\times n$ matrices $M$ close to $\frac{1}{2}I$, where $\alpha = (\alpha_{ij})_{1 \leq i,j\leq n}$ denotes a multi-index with non-negative integer entries, $|\alpha| := \sum_{1 \leq i,j\leq n} \alpha_{ij}$, $(M - \frac{1}{2}I)^\alpha := \prod_{1 \leq i,j\leq n} (M_{ij} - 1/2\delta_{ij})^{\alpha_{ij}}$, and the coefficients $c_\alpha$ are independent of $t$. Recalling that $-d^2S(W_{m-1})) = \frac{1}{2}I + O(d)$ and $-d^2S(W_{m})) = \frac{1}{2}I + O(d)$ near $\partial\Omega$, we deduce from \eqref{Eq:24II22-1} that
\begin{align}
&f(\lambda(-d^2 S(W_m)))
	- f(\lambda(-d^2 S(W_{m-1})))\nonumber\\
	&~ = \frac{2}{n}  \sum_{1 \leq i \leq n} [-d^2 S_{ii}(W_{m})  + d^2 S_{ii}(W_{m-1})]\nonumber\\
		&\quad + 
		\sum_{2 \leq |\alpha| \leq m} c_{\alpha} \Big\{ [-d^2 S(W_m) - \frac{1}{2}I]^{\alpha} - [-d^2 S(W_{m-1}) - \frac{1}{2}I]^{\alpha}\Big\}
			+ \textrm{err}_m.
		\label{Eq:24II22-2}
\end{align}
Observe from \eqref{Eq:SWmRec} that, for $|\alpha| \geq 2$ and $1 \leq i,j \leq n$,
\[
[-d^2 S_{ij}(W_{m}) - \frac{1}{2}\delta_{ij}]^\alpha -  [-d^2 S_{ij}(W_{m-1}) - \frac{1}{2}\delta_{ij}]^\alpha  = \textrm{err}_m.
\]
Thus all terms in the last line of \eqref{Eq:24II22-2} is $\textrm{err}_m$. Using this as well as \eqref{Eq:SWmRec} in \eqref{Eq:24II22-2} we obtain
\begin{align*}
&f(\lambda(-d^2 S(W_m)))
	- f(\lambda(-d^2 S(W_{m-1})))\\
	&\quad= \frac{2}{n}   \sum_{1 \leq i \leq n} [-d^2 S_{ii}(W_{m})  + d^2 S_{ii}(W_{m-1})]
			+ \textrm{err}_m\\
	&\quad= \frac{2}{n}  \sum_{q=0}^{N_m} 	\Big[ m(m-n+1)\hat c_{m,q}\nonumber \\
		&\qquad+ (2m - n+1)(q+1)\hat c_{m,q+1}
			+  (q+1) (q+2)\hat c_{m,q+2}\Big]d^m  (\ln d)^q
			+ \textrm{err}_m,
\end{align*}
which resembles estimate \eqref{Eq:sigmakWW} for $\sigma_k(\lambda(-d^2 S(W_m))) - \sigma_k(\lambda(-d^2 S(W_{m-1})))$.

\item In the proof of Lemma \ref{lem:Bar1}, we follow the argument in the previous point to estimate $f(\lambda(-d^2 S(W_\beta)))	- f(\lambda(-d^2 S(W)))$.

\item In Step 2 of the proof of Proposition \ref{prop:PreOpExp}, the expression of $\hat G$ is modified to
\[
\hat G(\nabla^2 \hat w, \nabla \hat w, \hat w, x)
	:= f (\lambda(-\hat S(\hat w)) - e^{2\hat w} 
	= x_n^2\big[f(\lambda(-x_n^2 S(w)) - e^{2w}\big].
\]

\item In Step 3 of the proof of Proposition \ref{prop:PreOpExp}, the expression of $F$ is modified to
\[
F(\nabla^2 v, \nabla v, v) := f(\lambda(-S(v))) - e^{2v}.
\]
\end{enumerate}
\end{proof}

\section{Non-differentiability: Theorem \ref{thm:MX} and its consequences}  \label{Sec:MainProof}

Let us first assume Theorem \ref{thm:MX} and give the

\begin{proof}[Proof of Theorem \ref{thm:main} and the second statement of Theorem \ref{thm:mainfG}]
We will only prove Theorem \ref{thm:main}. The proof of the second statement of Theorem \ref{thm:mainfG} is the same.

We write $\Omega = \Omega_\ell \setminus (\Omega_1 \cup \ldots \cup \Omega_{\ell-1})$ and  $\partial \Omega= \partial\Omega_1 \cup \dots \cup \partial \Omega_\ell$ where $\ell\ge 2$, the domains $\Omega_i$'s are bounded and the sets $\partial\Omega_i$'s are disjoint connected components of $\partial \Omega$. For small $\delta > 0$ we write 
\begin{align*}
\om_\delta := \{y\in \om: d_{\mathring{g}}(y, \pa \om)>\delta\} = \Omega_{\ell,\delta} \setminus (\Omega_1^\delta \cup \ldots \cup \Omega_{\ell - 1}^\delta)
\end{align*}
where $\Omega_i \subset \Omega_i^\delta$ for $1 \leq i \leq \ell - 1$, $\Omega_{\ell,\delta} \subset \Omega_\ell$, and the sets $\partial \Omega_1^\delta, \ldots, \partial\Omega_{\ell-1}^\delta, \partial \Omega_{\ell,\delta}$ are disjoint, closed smooth hypersurfaces in $\om$. (Recall that $\mathring{g}$ denotes the Euclidean metric.) See Figure \ref{FigOmd}.

\begin{figure}[h]
\caption{The sets $\Omega$ and $\Omega_\delta$.}\label{FigOmd}
\begin{center}
\begin{tikzpicture}
\draw (0,0) ellipse (4 and 2);
\draw[dashed] (0,0) ellipse (3.8 and 1.8);
\draw (2.8, 1.8) node {$\Omega_\ell$};
\draw (2, 1.2) node {$\Omega_{\ell,\delta}$};
\draw (-1.5,0) ellipse (.5 and 1);
\draw[dashed] (-1.5,0) ellipse (.7 and 1.2);
\draw (-1.6,0) node {$\Omega_1$};
\draw (-2.5,0) node {$\Omega_1^\delta$};

\draw (1.5,0) ellipse (1 and .5);

\draw[dashed] (1.5,0) ellipse (1.2 and .7);
\draw (1.5,-.2) node {$\Omega_2$};
\draw (1.5,-1) node {$\Omega_2^\delta$};

\end{tikzpicture}
\end{center}
\end{figure}
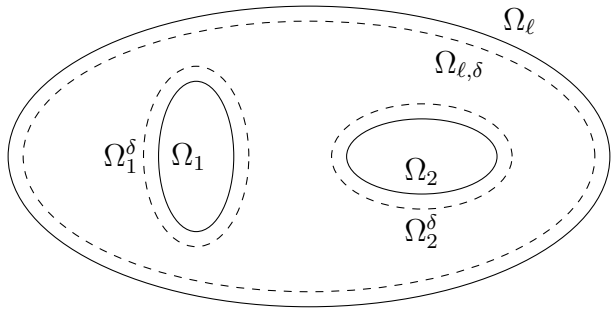

Note that, since $u \in C^1(\Omega)$, if $\Sigma$ is a smooth surface in $\Omega$, then the mean curvature $H_\Sigma$ of $\Sigma$ (in a specified normal direction) with respect to $g$ is well-defined. To dispel confusion, in our notation, the mean curvature is the trace of the second fundamental form. Moreover, if we denote by $\mathring{H}_\Sigma$ the mean curvature of $\Sigma$ with respect to the Euclidean metric $\mathring{g}$, then 
\begin{equation}
H_\Sigma = u^{-\frac{2}{n-2}}\Big(-\frac{2(n-1)}{n-2}\partial_\nu \ln u  + \mathring{H}_\Sigma \Big).
	\label{Eq:06IV20-X1}
\end{equation}
where $\nu$ is the $\mathring{g}$-unit normal to $\Sigma$ along the specified normal direction of $\Sigma$.

Since $u$ is the solution of \eqref{eq:main-1a}-\eqref{eq:main-1b}, by Theorem \ref{thm:mainX}, there exists $\delta_0 > 0$ such that $u \in C^\infty(\Omega \setminus \Omega_{\delta_0})$ and
\begin{equation}
H_{\partial\Omega_\delta} > 0 \text{ for all }0 < \delta < \delta_0,
	\label{Eq:MCBarrier}
\end{equation}
where the mean curvature is computed with respect to the normal pointing towards $\Omega_\delta$. Using \eqref{Eq:MCBarrier} and applying Theorem \ref{thm:MX} to any $\Omega_\delta$ in place of $\Omega$ with some small $\delta > 0$, the result follows immediately.
\end{proof}

The rest of the section is devoted to the proof of Theorem \ref{thm:MX}. The plan is as follows. In Subsection \ref{SSec:3.1}, we set up an obstacle problem to look for a set $\overline{\Omega_1} \subset E \subset \overline{\Omega} \cup \overline{\Omega_1}$ which minimizes the perimeter function with respect to $g$. We show that either a component of $\partial\Omega$ has zero mean curvature or $\Sigma = \partial E$ detaches from $\partial\Omega$, in which case $\Sigma$ is a $C^1$ hypersurface away from a singular set $\mathcal{S} \subset \Sigma$ of Hausdorff dimension at most $n - 8$ and $\Sigma \setminus \mathcal{S}$ has zero mean curvature (see Proposition \ref{prop:Conc}). In Subsection \ref{SSec:3.2}, we then extend an argument made in \cite{LiNg21-JMS} using an obstruction for the existence of a minimal hypersurface for metrics whose Schouten tensors belong to the negative $\bar\Gamma_2$ cone in a smooth context to the current situation (see Lemma \ref{Lem:12XI21MC}) to deduce a contradiction and conclude the proof.

\subsection{Existence and regularity of a minimal hypersurface}\label{SSec:3.1}

Let us recall the notion of perimeter with respect to $g = g_u = u^{\frac{4}{n-2}}\mathring{g}$ when $u > 0$ and $u^{\frac{2n}{n-2}} \in W^{1,1}(\Omega_\ell)$, where throughout the section, unless otherwise stated, all Sobolev spaces are defined using the measure given by the Euclidean metric. For an open set $A \subset \Omega_\ell$, let $\mathcal{X}(A)$ be the set of smooth vector fields compactly supported in $A$. Recall that, for a measurable set $E \subset \Omega_\ell$ and an open set $A \subset \Omega_\ell$, the perimeter $Per_g(E,A)$ of $E$ in $A$ with respect to the metric $g$ is defined as the total variation of the distributional gradient $\nabla_g I_E$  of the characteristic function $I_E$ of $E$ in $A$, i.e. 
\[
Per_g(E,A)  := |\nabla_g I_E|(A)\\
	=  \sup \Big\{ \int_{A} I_E \,\mathrm{div}_{g}  w\, \ud vol_{g}:  w\in \mathcal{X} (A), |w|_{g}\le 1 \text{ in } A\Big\} .
\]
When it is clear from the context, we simply write $Per(E,A)$ instead of $Per_g(E,A)$. If $Per(E,\Omega_\ell) < \infty$ and $\partial E$ is $C^1$ away from a set of zero $(n-1)$-dimensional Hausdorff measure (defined using the metric $g$), then 
\begin{equation}
Per_g(E,A) = \mathcal{H}^{n-1}_g(\partial E \cap A).
	\label{Eq:Per=H}
\end{equation}
We also have the general inequality
\begin{equation}
	Per_g(E_1 \cup E_2, A) + Per_g(E_1 \cap E_2, A) \leq Per_g(E_1,A) + Per_g(E_2,A).
	\label{Eq:PerInOut}
\end{equation}
We refer readers to e.g. \cite{AmbrosioFuscoPallara, Giusti-MinSurf, Maggi12-Book} for more details.

As in the Euclidean case, it is an easy consequence of Lebesgue's dominated convergence theorem that the perimeter function is lower semi-continuous, namely if $I_{E_j} \rightarrow I_E$ a.e. in $\Omega_\ell$ then
\[
Per(E,\Omega_\ell) \leq \liminf_{j \rightarrow \infty} Per(E_j,\Omega_\ell).
\]

The following lemma gives a relation between the perimeter functions defined using two conformal metrics.

\begin{lem}\label{Lem:PerRel}
Let $B_r(x_0)$ be a Euclidean ball and $g = u^{\frac{4}{n-2}} \mathring{g}$ with $\ln u \in L^\infty(B_r(x_0)) \cap W^{1,1}(B_{r}(x_0))$. For any measurable set $E$, we have
\begin{align*}
Per_{\mathring{g}}(E, B_r(x_0)) \leq \|u^{-1}\|_{L^\infty(B_r(x_0))}^{\frac{2(n-1)}{n-2}}  Per_g(E, B_r(x_0)),\\
Per_g(E, B_r(x_0)) \leq \|u\|_{L^\infty(B_r(x_0))}^{\frac{2(n-1)}{n-2}}   Per_{\mathring{g}}(E, B_r(x_0)). 
\end{align*}
\end{lem}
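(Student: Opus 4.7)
\smallskip

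\textbf{Proof plan.} The proof will rest on three elementary conformal relations between $g$ and $\mathring g$:
\[
dvol_g = u^{\frac{2n}{n-2}}\,dx, \qquad |w|_g = u^{\frac{2}{n-2}}|w|_{\mathring g}, \qquad (\mathrm{div}_g w)\,dvol_g = \mathrm{div}\bigl(u^{\frac{2n}{n-2}}\,w\bigr)\,dx,
\]
the last being a distributional restatement of $\mathrm{div}_g w = (\det g)^{-1/2}\partial_i((\det g)^{1/2} w^i)$ with $\sqrt{\det g} = u^{2n/(n-2)}$. The hypothesis $\ln u \in L^\infty\cap W^{1,1}(B_r(x_0))$ will be used via the chain rule to conclude $u^{\pm\frac{2n}{n-2}} = \exp(\pm\tfrac{2n}{n-2}\ln u) \in L^\infty\cap W^{1,1}(B_r(x_0))$, so that for any smooth compactly supported $w$ the product $u^{\frac{2n}{n-2}}\,w$ lies in $W^{1,1}_c(B_r(x_0))\cap L^\infty$ and the third identity above is rigorous.

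\smallskip

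For the second inequality, I would start from an arbitrary $w \in C_c^\infty(B_r(x_0);\mathbb{R}^n)$ with $|w|_g \leq 1$ and set $\tilde w := u^{\frac{2n}{n-2}}\,w$. The pointwise identity $|\tilde w|_{\mathring g} = u^{\frac{2(n-1)}{n-2}}|w|_g \leq C_1 := \|u\|_{L^\infty}^{\frac{2(n-1)}{n-2}}$ follows by direct substitution. A standard Euclidean mollification $\tilde w_\epsilon := \rho_\epsilon * \tilde w$ produces vector fields in $C_c^\infty(B_r(x_0);\mathbb{R}^n)$ (for small $\epsilon$) with $\tilde w_\epsilon \to \tilde w$ in $W^{1,1}$; moreover, the nonnegative-kernel bound $|\tilde w_\epsilon(x)|_{\mathring g} \leq (\rho_\epsilon * |\tilde w|_{\mathring g})(x) \leq C_1$ shows that $\tilde w_\epsilon / C_1$ is an admissible test field for $Per_{\mathring g}$. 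Hence $\int I_E\,\mathrm{div}(\tilde w_\epsilon)\,dx \leq C_1\,Per_{\mathring g}(E,B_r(x_0))$, and passing $\epsilon\to 0$ using $L^1$-convergence of $\mathrm{div}\,\tilde w_\epsilon$ yields
\[
\int I_E\,\mathrm{div}_g w\,dvol_g = \int I_E\,\mathrm{div}(\tilde w)\,dx \leq C_1\,Per_{\mathring g}(E,B_r(x_0)).
\]
Taking the supremum over $w$ gives $Per_g \leq C_1\,Per_{\mathring g}$. The first inequality will follow by symmetry, observing that $\mathring g = (u^{-1})^{\frac{4}{n-2}}\,g$ so the roles of $(g,\mathring g,u)$ may be swapped with $(\mathring g,g,u^{-1})$; the hypothesis on $\ln u$ is invariant under $u \mapsto u^{-1}$, and the same argument applied to the swapped data produces $Per_{\mathring g} \leq \|u^{-1}\|_{L^\infty}^{\frac{2(n-1)}{n-2}}\,Per_g$.

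\smallskip

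The main obstacle I anticipate is the preservation of the pointwise norm bound through mollification when $u$ is merely in $L^\infty\cap W^{1,1}$ rather than continuous. It will be important to mollify $\tilde w = u^{\frac{2n}{n-2}} w$ directly (rather than $w$), since Jensen's inequality for a nonnegative kernel then gives the sharp constant $C_1$ automatically; mollifying $w$ itself and only afterwards multiplying by the factor $u^{\frac{2n}{n-2}}$ would instead produce the nonsharp constant $\|u\|_{L^\infty}^{\frac{2}{n-2}}\|u^{-1}\|_{L^\infty}^{\frac{2n}{n-2}}$, because pointwise products do not commute with convolution. Routing everything through the Euclidean mollification of $\tilde w$ and using the symmetry $u \leftrightarrow u^{-1}$ for the reverse bound avoids this loss entirely.
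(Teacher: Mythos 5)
Your treatment of the second inequality is correct and is essentially the paper's argument (the paper runs the identical computation, just for the first inequality, pairing $I_E$ with the conformally rescaled field and using $|u^{\pm\frac{2n}{n-2}}w|$-norm identities); your Euclidean mollification of $\tilde w$ even supplies an approximation detail the paper leaves implicit. The gap is in the ``by symmetry'' derivation of the first inequality. The conformal identities are indeed symmetric under $(g,\mathring{g},u)\leftrightarrow(\mathring{g},g,u^{-1})$, but the one step of your proof that uses the flatness of $\mathring{g}$ --- convolution with a nonnegative kernel preserving the \emph{constant} bound $|\tilde w_\epsilon|_{\mathring{g}}\le C_1$ via Jensen --- does not transfer. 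In the swapped situation the target functional is $Per_g$, whose admissible test fields are smooth with $|\cdot|_g=u^{\frac{2}{n-2}}|\cdot|_{\mathring{g}}\le 1$; for the candidate $\tilde w=u^{-\frac{2n}{n-2}}w$ one only has the variable bound $|\tilde w|_{\mathring{g}}\le u^{-\frac{2n}{n-2}}$, and Euclidean mollification gives merely $|\rho_\epsilon*\tilde w|_g\le u^{\frac{2}{n-2}}\,\rho_\epsilon*\big(u^{-\frac{2n}{n-2}}\big)\le \|u\|_{L^\infty}^{\frac{2}{n-2}}\|u^{-1}\|_{L^\infty}^{\frac{2n}{n-2}}$, i.e.\ exactly the non-sharp constant you set out to avoid; there is no mollification adapted to $g$ when $u$ is merely $L^\infty\cap W^{1,1}$, so ``the same argument applied to the swapped data'' is not available.

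What is actually needed for the first inequality --- and what the paper's one-line proof uses implicitly when it pairs $I_E$ with the non-smooth field $u^{-\frac{2n}{n-2}}w$ and immediately bounds by $Per_g$ --- is that compactly supported $W^{1,1}\cap L^\infty$ fields $\phi$ with $|\phi|_g\le C$ a.e.\ are legitimate competitors for $Per_g$ up to the factor $C$. This requires its own argument when $u$ is discontinuous: for instance, first obtain the crude bound $Per_{\mathring{g}}\le \|u\|_{L^\infty}^{\frac{2}{n-2}}\|u^{-1}\|_{L^\infty}^{\frac{2n}{n-2}}Per_g$ by testing with the smooth fields $u_\epsilon^{-\frac{2n}{n-2}}w$, $u_\epsilon=\rho_\epsilon*u$, conclude that $E$ has finite Euclidean perimeter in $B_r(x_0)$, and then recover the sharp constant via the representation of $Per_g$ as the weighted measure $\int_{\partial^*E}u^{\frac{2(n-1)}{n-2}}\,d\mathcal{H}^{n-1}$ (Gauss--Green with precise representatives of $W^{1,1}\cap L^\infty$ fields on $\partial^*E$). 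Alternatively, observe that the non-sharp constant already suffices for every later use of the lemma (Lemma \ref{Lem:AlMin} only needs constants controlled by $\|\ln u\|_{L^\infty}$), or that in the applications $u$ is continuous, in which case $\rho_\epsilon*\big(u^{-\frac{2n}{n-2}}\big)\le(1+o(1))\,u^{-\frac{2n}{n-2}}$ locally uniformly and your scheme closes with the stated constant. As written, however, the first inequality with its sharp constant is not proved.
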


\begin{proof}
We will only prove the first inequality. The proof of the second inequality is the same. For any $w \in \mathcal{X}(B_r(x_0))$ with $|w|_{\mathring{g}} \leq 1$ in $B_r(x_0)$, we have
\begin{align*}
\int_{E \cap B_r(x_0)} \mathrm{div}_{\mathring{g}} w\,\ud vol_{\mathring{g}} 
	&= \int_{E \cap B_r(x_0)} \mathrm{div}_{g} \big(u^{-\frac{2n}{n-2}}w\big)\, \,\ud vol_{g}\\
	&\leq \|u^{-1}\|_{L^\infty(B_r(x_0))}^{\frac{2(n-1)}{n-2}} Per_g(E,B_r(x_0)),
\end{align*}
where we have used $|u^{-\frac{2n}{n-2}}w|_g = u^{-\frac{2(n-1)}{n-2}} |w|_{\mathring{g}} \leq \|u^{-1}\|_{L^\infty(B_r(x_0))}^{\frac{2(n-1)}{n-2}}$. Taking the supremum on the left hand side, we conclude the proof.
\end{proof}

Consider the obstacle problem
\begin{equation}\label{Eq:ObsProb}
J= \inf \left\{Per_g(E,\Omega_\ell) : \overline{\Omega_1} \subset E \subset  \overline{\Omega} \cup \overline{\Omega_1} \right \}. 
\end{equation}

\begin{lem} \label{lem:ex} 
Let $\Omega = \Omega_\ell \setminus (\Omega_1 \cup \ldots \cup \Omega_{\ell-1})$ be a bounded domain in $\R^n$, $n \geq 3$, with smooth and disconnected boundary $\partial \Omega = \partial\Omega_1 \cup \ldots \cup \partial \Omega_\ell$, $\ell \geq 2$. Let $\ln u \in L^\infty({\Omega_\ell}) \cap W^{1,1}(\Omega_\ell)$. Then the minimization problem $J$ in \eqref{Eq:ObsProb} is attained by some set $E$. 
\end{lem}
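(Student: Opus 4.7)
The plan is to apply the direct method of the calculus of variations, exploiting Lemma \ref{Lem:PerRel} to transfer compactness from the $g$-perimeter to the Euclidean perimeter, where standard BV theory applies.

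First, I would show that $J$ is finite. The set $E_0 = \overline{\Omega_1}$ is admissible, and since $\partial \Omega_1$ is smooth and compact, $Per_{\mathring{g}}(E_0, \Omega_\ell) < \infty$. Applying the second inequality of Lemma \ref{Lem:PerRel} on a neighborhood of $\partial \Omega_1$ (using $\ln u \in L^\infty(\Omega_\ell)$) gives $Per_g(E_0, \Omega_\ell) < \infty$, hence $J < \infty$.

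Next, take a minimizing sequence $\{E_j\}$ with $\overline{\Omega_1} \subset E_j \subset \overline{\Omega} \cup \overline{\Omega_1}$ and $Per_g(E_j, \Omega_\ell) \to J$. Since $\ln u \in L^\infty(\Omega_\ell)$, the first inequality of Lemma \ref{Lem:PerRel} (applied by covering $\Omega_\ell$ by finitely many balls) yields a uniform bound
\[
Per_{\mathring{g}}(E_j, \Omega_\ell) \leq \|u^{-1}\|_{L^\infty(\Omega_\ell)}^{\frac{2(n-1)}{n-2}} Per_g(E_j, \Omega_\ell) \leq C.
\]
Since $\Omega_\ell$ is bounded, the characteristic functions $I_{E_j}$ are bounded in $BV(\Omega_\ell)$, so by the standard BV compactness theorem (see e.g. \cite{AmbrosioFuscoPallara, Giusti-MinSurf, Maggi12-Book}) we may extract a subsequence, still denoted $\{E_j\}$, and a measurable set $E \subset \Omega_\ell$ such that $I_{E_j} \to I_E$ in $L^1(\Omega_\ell)$ and almost everywhere.

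To finish, I would verify admissibility and apply lower semicontinuity. Since $I_{E_j} = 1$ a.e.\ on $\Omega_1$ and $I_{E_j} = 0$ a.e.\ on $\Omega_\ell \setminus (\overline{\Omega} \cup \overline{\Omega_1})$ for every $j$, the a.e.\ limit satisfies the same identities, so after redefining $E$ on a set of Lebesgue measure zero we may arrange $\overline{\Omega_1} \subset E \subset \overline{\Omega} \cup \overline{\Omega_1}$; this modification does not change $I_E$ as an $L^1$ function and hence does not alter $Per_g(E, \Omega_\ell)$. The lower semicontinuity of the $g$-perimeter with respect to $L^1$ convergence (which, as noted after \eqref{Eq:Per=H}, is an immediate consequence of dominated convergence applied to the defining supremum, since the test fields $w$ are fixed and smooth) then gives
\[
Per_g(E, \Omega_\ell) \leq \liminf_{j \to \infty} Per_g(E_j, \Omega_\ell) = J,
\]
so $E$ attains the infimum. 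The only subtle point is the preservation of the two-sided set-theoretic constraint in the limit, but this is harmless because the constraint is equivalent to the pointwise a.e.\ condition $I_{E_j} \geq I_{\Omega_1}$ and $I_{E_j} \leq I_{\overline{\Omega} \cup \overline{\Omega_1}}$, which is stable under a.e.\ convergence.
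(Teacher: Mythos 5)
Your proof is correct and follows essentially the same route as the paper: the direct method with a minimizing sequence, BV compactness (with the $g$- and Euclidean perimeters compared via the $L^\infty$ bound on $\ln u$), and lower semicontinuity of $Per_g$ under $L^1$ convergence. You simply spell out details the paper leaves as standard, namely the finiteness of $J$, the explicit use of Lemma \ref{Lem:PerRel}, and the preservation of the constraint $\overline{\Omega_1} \subset E \subset \overline{\Omega} \cup \overline{\Omega_1}$ up to a null set.
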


\begin{proof}
The proof is standard and is included for completeness. Let $E_j$ be a minimizing sequence so that $Per(E_j,\Omega_\ell) \rightarrow J$. Then $\{I_{E_j}\}$ is a bounded sequence in $BV(\Omega_\ell)$. By the compactness theorem of BV functions, we may assume, after extracting a subsequence, that $I_{E_j}$ converges in $L^1(\Omega_\ell)$ and almost everywhere to some $BV$ function, which takes value in $\{0,1\}$ and so is the characteristic function of some measurable set $E$ with $\overline{\Omega_1}  \subset E \subset \overline{\Omega} \cup \overline{\Omega_1}$. By the lower semi-continuity of the total variation of BV functions with respect to $L^1$ convergence, we have that $J = Per(E,\Omega_\ell)$.  
\end{proof}

It is convenient to denote the symmetric difference of two sets $A$ and $B$ as $A \Delta B = (A \setminus B) \cup (B \setminus A)$.

\begin{lem}\label{Lem:AlMin}
Under the notations of Lemma \ref{lem:ex}, the set $E$ is a local almost minimizer of $Per_{\mathring{g}}$, namely there exist $r_1 = r_1(\Omega) > 0$ and $C = C(n,\Omega,\|\ln u\|_{L^\infty({\Omega_\ell})}) > 0$ such that
\begin{equation}
Per_{\mathring{g}}(E,B_{r}(x_0)) \leq Per_{\mathring{g}}(F,B_{r}(x_0)) + C(r + \mathop{\mathrm{ess\,osc}}_{\bar B_r(x_0) \cap \bar\Omega_\ell} \ln u) r^{n-1}
	\label{Eq:12I22-CX}
\end{equation}
for all $x_0 \in \partial E$, $r \in (0,r_1]$ and $F$ such that $\overline{F \Delta E} \subset B_{r}(x_0) \cap \Omega$.
\end{lem}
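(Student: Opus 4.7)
The plan is to transfer the $Per_g$-minimality of $E$ from Lemma \ref{lem:ex} into an almost-minimality for $Per_{\mathring{g}}$ via the conformal comparison of Lemma \ref{Lem:PerRel}, with the essential oscillation of $\ln u$ controlling the resulting discrepancy. First, fix $r_1 = r_1(\Omega) > 0$ smaller than half the minimum Euclidean distance between distinct connected components of $\partial\Omega$. Given any $F$ with $\overline{F\Delta E} \subset B_r(x_0) \cap \Omega$ for some $x_0 \in \partial E$ and $r \in (0, r_1]$, I would first verify that $F$ is an admissible competitor in \eqref{Eq:ObsProb}: since the modification region $\Omega$ is disjoint from $\overline{\Omega_1}$, the inclusion $\overline{\Omega_1} \subset F$ is inherited from $E$; since the modification stays within $\Omega \subset \overline{\Omega}$ and $E \subset \overline{\Omega} \cup \overline{\Omega_1}$, also $F \subset \overline{\Omega} \cup \overline{\Omega_1}$. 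The minimality of $E$ then yields $Per_g(E, \Omega_\ell) \le Per_g(F, \Omega_\ell)$, which localizes to $Per_g(E, B_r(x_0)) \le Per_g(F, B_r(x_0))$ for those (a.e.) $r$ on which the perimeter measures $|\nabla_g I_E|$ and $|\nabla_g I_F|$ do not charge $\partial B_r(x_0)$; for general $r$, a standard approximation through a.e. radii $r^\ast \in (r, r + \va r)$ introduces an additive slack of order $r^n$.

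Apply Lemma \ref{Lem:PerRel} (whose proof extends verbatim to any open set in place of a Euclidean ball) in both directions, setting $\kappa := 2(n-1)/(n-2)$ and $\eta := \mathop{\mathrm{ess\,osc}}_{\bar B_r \cap \bar\Omega_\ell} \ln u$:
\[
Per_{\mathring{g}}(E, B_r) \le \|u^{-1}\|_{L^\infty(B_r)}^{\kappa}\|u\|_{L^\infty(B_r)}^{\kappa} Per_{\mathring{g}}(F, B_r) + O(r^n) = e^{\kappa \eta} Per_{\mathring{g}}(F, B_r) + O(r^n).
\]
Since $\eta \le 2 \|\ln u\|_{L^\infty(\Omega_\ell)}$, the elementary bound $e^{\kappa \eta} - 1 \le C \eta$ converts this into
\[
Per_{\mathring{g}}(E, B_r) \le Per_{\mathring{g}}(F, B_r) + C \eta \, Per_{\mathring{g}}(F, B_r) + O(r^n).
\]

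To upgrade this to \eqref{Eq:12I22-CX}, I would test the same argument with the filling-in competitor $F^\ast := E \cup (B_r(x_0) \cap \Omega)$. By the choice of $r_1$, $B_r(x_0)$ meets at most one connected component of $\partial\Omega$, and a direct case analysis (using $\Omega \cup \Omega_1 = \Omega_\ell \setminus (\Omega_2 \cup \cdots \cup \Omega_{\ell-1})$ up to measure zero, together with $\overline{\Omega_1} \subset E$ and $E \cap \Omega_j$ essentially empty for $j = 2, \ldots, \ell-1$) shows that the reduced boundary of $F^\ast$ inside $B_r(x_0)$ lies in the smooth hypersurface $\partial\Omega \cap B_r(x_0)$ (or is empty), whose Euclidean $\mathcal{H}^{n-1}$-measure is $\le C r^{n-1}$ by the $C^{1,\alpha}$-smoothness of $\partial\Omega$. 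Hence $Per_{\mathring{g}}(F^\ast, B_r) \le C r^{n-1}$, and running the above argument with $F^\ast$ in place of $F$ gives $Per_{\mathring{g}}(E, B_r) \le C r^{n-1}$. Consequently, for any competitor $F$ one either has $Per_{\mathring{g}}(F, B_r) \le 2 Per_{\mathring{g}}(E, B_r) \le 2C r^{n-1}$, in which case the preceding display yields \eqref{Eq:12I22-CX}, or $Per_{\mathring{g}}(F, B_r) > 2 Per_{\mathring{g}}(E, B_r)$, in which case \eqref{Eq:12I22-CX} holds trivially. The main technical difficulty throughout is the handling of the boundary contribution at $\partial B_r(x_0)$ when localizing the $Per_g$-minimality, since the perimeter measure may charge $\partial B_r$ for non-generic $r$; passing through nearby a.e. radii is exactly what generates the $Cr \cdot r^{n-1}$ slack appearing in \eqref{Eq:12I22-CX}.
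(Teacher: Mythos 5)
Your argument is correct for the statement as literally phrased, but it proves a weaker fact than the paper's proof does, by a genuinely simpler route. Because you only admit competitors with $\overline{F \Delta E} \subset B_r(x_0) \cap \Omega$, every competitor automatically satisfies the constraint $\overline{\Omega_1} \subset F \subset \overline{\Omega} \cup \overline{\Omega_1}$, so the obstacle never enters and the estimate reduces to the conformal bookkeeping of Lemma \ref{Lem:PerRel}, a density bound, and a dichotomy --- essentially the paper's Cases 1 and 2. The paper's proof instead treats arbitrary $F$ with $\overline{F \Delta E} \subset B_r(x_0)$, so $F$ may dig into $\overline{\Omega_1}$; restoring admissibility via $F \cup \overline{\Omega_1}$ is what forces the submodularity inequality \eqref{Eq:PerInOut} together with Tamanini's estimate $Per_{\mathring{g}}(\overline{\Omega_1},B_r) - Per_{\mathring{g}}(F \cap \overline{\Omega_1},B_r) \leq Cr^n$ for the smooth hypersurface $\partial\Omega_1$ (Case 3), and it is this obstacle term --- not any localization issue --- that generates the $Cr\cdot r^{n-1}$ contribution in \eqref{Eq:12I22-CX}. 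The unrestricted form is what the regularity theory for almost minimizers uses at points of $\partial E \cap \partial\Omega$ in Corollaries \ref{cor:IntReg} and \ref{cor:BdrReg}, so your version, though valid, would not by itself support those applications.

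Two points in your write-up need repair, neither fatal for the restricted statement. First, the localization of the $Per_g$-minimality is exact, not approximate: since $K := \overline{F \Delta E}$ is a compact subset of the open set $B_r(x_0) \cap \Omega$, the measures $|\nabla_g I_E|$ and $|\nabla_g I_F|$ coincide on $\Omega_\ell \setminus K$, hence $Per_g(E,B_r(x_0)) \leq Per_g(F,B_r(x_0))$ with no slack at all; the a.e.-radii device is unnecessary, and your claim that it produces an $O(r^n)$ error is unjustified as stated (a naive annulus estimate only yields $O(r^{n-1})$, which would be too large), so your closing sentence attributing the $Cr\cdot r^{n-1}$ term in \eqref{Eq:12I22-CX} to this step is a misreading. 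Second, the filling-in competitor $F^* = E \cup (B_r(x_0) \cap \Omega)$ does not satisfy $\overline{F^* \Delta E} \subset B_r(x_0) \cap \Omega$, so you cannot literally ``run the above argument'' with it; instead compare $E$ with $F^*$ directly in the global problem \eqref{Eq:ObsProb} (for which $F^*$ is admissible) and localize to the closed ball, at the cost of an extra term $|\nabla_g I_{F^*}|(\partial B_r(x_0) \cap \Omega_\ell) \leq Cr^{n-1}$; this is harmless for the density bound $Per_{\mathring{g}}(E,B_r(x_0)) \leq Cr^{n-1}$, after which your dichotomy closes the proof as written.
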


\begin{proof} Fix some $x_0 \in \partial E$ and $r_1 < \frac{1}{2}\min_{i \neq j} \textrm{dist}(\partial\Omega_i, \partial\Omega_j)$ so that the ball $B_{r_1}(x_0)$ intersects at most one component of $\partial\Omega$. We will only consider the case either $B_{r_1}(x_0) \subset \Omega$ or $B_{r_1}(x_0) \cap \partial\Omega_1 \neq \emptyset$. The other cases can be dealt with in the same way. Note that the above implies $B_{r_1}(x_0) \subset \overline{\Omega} \cup \overline{\Omega_1}$.

Let us collect a few inequalities. Since $E$ is a minimizer for the minimization problem $J$, we have
\begin{equation}
Per_g(E,B_{r}(x_0)) \leq Per_g(F \cup \overline{\Omega_1},B_{r}(x_0))
	\label{Eq:12I22-E1}
\end{equation}
for all $r \in (0,r_1)$ and $F$ such that $\overline{F \Delta E} \subset B_{r}(x_0)$. (Here we have used that $B_{r_1}(x_0) \subset \overline{\Omega} \cup \overline{\Omega_1}$.) By \eqref{Eq:12I22-E1}, there exists $C_1 > 0$ depending only on $n$ and $\|\ln u\|_{L^\infty(\bar B_{r_1}(x_0))}$ such that, for $r \in (0,r_1)$,
\begin{equation}
Per_g(E,B_{r}(x_0))  \leq \mathcal{H}^{n-1}_g(\partial B_r(x_0)) \leq C_1r^{n-1}.
\label{Eq:RegVG}
\end{equation}
By \eqref{Eq:PerInOut} and \eqref{Eq:12I22-E1}, we have
\begin{equation}
Per_g(E,B_{r}(x_0)) \leq Per_g(F,B_{r}(x_0)) + Per_g(\overline{\Omega_1},B_{r}(x_0)) - Per_g(F \cap \overline{\Omega_1},B_{r}(x_0))
	\label{Eq:12I22-E2}
\end{equation}
for all $r \in (0,r_1)$ and $F$ such that $\overline{F \Delta E} \subset B_{r}(x_0)$. 

We now prove \eqref{Eq:12I22-CX}. Take an arbitrary $F$ such that $\overline{F \Delta E} \subset B_{r}(x_0)$.

\medskip
\noindent\underline{Case 1:} If $Per_g(F,B_{r}(x_0)) > C_1r^{n-1}$, then \eqref{Eq:12I22-CX} follows from \eqref{Eq:RegVG} and Lemma \ref{Lem:PerRel}. 

\medskip
\noindent\underline{Case 2:} If $Per_g(F,B_{r}(x_0)) \leq C_1r^{n-1}$ and $Per_g(F \cap \overline{\Omega_1}, B_{r}(x_0)) > Per_g(\overline{\Omega_1},B_{r}(x_0))$, then \eqref{Eq:12I22-E2} gives
\[
Per_g(E,B_{r}(x_0)) \leq Per_g(F,B_{r}(x_0)) ,
\]
and so \eqref{Eq:12I22-CX} follows from Lemma \ref{Lem:PerRel}. 

\medskip
\noindent\underline{Case 3:} If the above two cases do not hold, then $Per_g(F,B_{r}(x_0)) \leq C_1r^{n-1}$ and $Per_g(F \cap \overline{\Omega_1}, B_{r}(x_0)) \leq Per_g(\overline{\Omega_1},B_{r}(x_0))$. By the regularity of $\partial\Omega_1$, $Per_g(\overline{\Omega_1},B_{r}(x_0)) = O(r^{n-1})$ as $r \rightarrow 0$ where the implicit constant depends only on $n$, $\Omega$ and $\|\ln u\|_{L^\infty(\bar B_{r_1}(x_0))}$. By \eqref{Eq:RegVG}, \eqref{Eq:12I22-E2} and Lemma  \ref{Lem:PerRel}, we then have
\begin{multline*}
Per_{\mathring{g}}(E,B_{r}(x_0)) \leq Per_{\mathring{g}}(F,B_{r}(x_0)) \\
	+ Per_{\mathring{g}}(\overline{\Omega_1},B_{r}(x_0)) - Per_{\mathring{g}}(F \cap \overline{\Omega_1},B_{r}(x_0))
	+ C \mathop{\textrm{osc}}_{\bar B_r(x_0) \cap \overline{\Omega_\ell}} \ln u\,r^{n-1},
\end{multline*}
where the constant $C$ depends only on $n$, $\Omega$ and $\|\ln u\|_{L^\infty(\bar B_{r_1}(x_0))}$. Since $\partial\Omega_1$ is smooth and 
\[
\overline{(F \cap \bar\Omega_1) \Delta \overline{\Omega_1}} \subset \overline{\overline{\Omega_1} \setminus F} \subset \overline{E \setminus F} \subset B_{r(x_0)},
\] 
it is known that (see \cite[Proposition 1]{Tamanini82-JRAM}) 
\[
Per_{\mathring{g}}(\overline{\Omega_1},B_{r}(x_0)) - Per_{\mathring{g}}(F \cap \overline{\Omega_1},B_{r}(x_0)) \leq Cr^n.
\]
The conclusion follows.
\end{proof}

In view of known results for almost minimizing sets, we have the following two consequences of Lemma \ref{Lem:AlMin} when the $u$ is suitably Dini continuous. In the sequel, we suppose that $\omega: (0,r_0] \rightarrow [0,\infty)$ for some $r_0 > 0$ satisfies
\begin{equation}
\frac{\omega(r)}{r} \text{ in non-increasing in $(0,r_0]$ and } \int_0^{r_0}  \big[\omega(r)\big]^{1/2} \frac{\ud r}{r} < \infty.
	\label{Eq:DiniC}
\end{equation}

\begin{cor} \label{cor:IntReg} 
Under the notations of Lemma \ref{lem:ex}, suppose further that $u$ satisfies
\[
\sup_{x_0 \in \Omega}  \mathop{\mathrm{osc}}_{B_r(x_0) \cap \Omega} \ln u \leq \omega(r) \text{ for } 0 < r < r_0 \text{ and some $\omega$ satisfying \eqref{Eq:DiniC}}.
\]
Then there exists a set $\mathcal{S}$ of Hausdorff dimension at most $n-8$ such that $\partial E \setminus \mathcal{S}$ is a $C^1$-hypersurface (containing the reduced boundary $\partial^* E$ of $E$).
\end{cor}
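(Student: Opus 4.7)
\medskip

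The plan is to apply the classical regularity theory for almost-minimizers of the perimeter functional (in the Euclidean sense). First I would substitute the hypothesis $\mathop{\mathrm{osc}}_{B_r(x_0) \cap \Omega} \ln u \leq \omega(r)$ into the estimate \eqref{Eq:12I22-CX} of Lemma \ref{Lem:AlMin}. This yields, for all $x_0 \in \partial E$, all $r \in (0, \min(r_0, r_1)]$, and every competitor $F$ with $\overline{F \Delta E} \subset B_r(x_0) \cap \Omega$,
\[
Per_{\mathring{g}}(E, B_r(x_0)) \leq Per_{\mathring{g}}(F, B_r(x_0)) + C \psi(r) \, r^{n-1},
\]
where $\psi(r) := r + \omega(r)$. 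By the assumption \eqref{Eq:DiniC} on $\omega$, the function $\psi$ is non-decreasing with $\psi(r)/r$ essentially decreasing near $0$ and satisfies the Dini integrability
\[
\int_0^{r_0} \sqrt{\psi(r)} \,\frac{\ud r}{r} < \infty.
\]
Thus $E$ is an almost-minimizer of the Euclidean perimeter in $\Omega$ with Dini-type deviation function $\psi$.

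Next, I would invoke the regularity theory for such almost-minimizers developed by Tamanini \cite{Tamanini82-JRAM} and related works (compare also the treatments in Maggi \cite{Maggi12-Book} and Ambrosio--Fusco--Pallara \cite{AmbrosioFuscoPallara}). This theory asserts that if $E$ is a $(\psi, r_1)$-almost minimizer of the Euclidean perimeter in an open set and $\psi$ satisfies the Dini condition above, then the reduced boundary $\partial^* E$ of $E$ is a $C^1$-hypersurface (indeed $C^{1,\alpha}$ for an appropriate modulus controlled by $\psi$), and $\partial^* E$ agrees with the topological boundary $\partial E$ up to a closed singular set $\mathcal{S}$ consisting of points where no approximate tangent hyperplane exists.

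The final step is the dimension bound for $\mathcal{S}$. Since the excess decays at a sufficiently fast rate (consequence of the Dini condition), tangent cones at every singular point are area-minimizing in $\mathbb{R}^n$. By the classical dimension reduction argument of Federer combined with the Bernstein-type theorem of Simons (ruling out singular area-minimizing hypercones in $\mathbb{R}^k$ for $k \leq 7$), one concludes that $\dim_{\mathcal{H}} \mathcal{S} \leq n - 8$. Putting these ingredients together yields the corollary. I do not anticipate a serious obstacle here, as the key work — namely converting the obstacle-problem minimality into an honest Euclidean almost-minimality property — has already been carried out in Lemma \ref{Lem:AlMin}; the present corollary is essentially a citation of standard regularity theory applied to that conclusion.
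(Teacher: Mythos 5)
Your proposal is correct and follows essentially the same route as the paper: both reduce the statement to an application of Lemma~\ref{Lem:AlMin} (which makes $E$ a Euclidean almost-minimizer with a deviation of the form $\psi(r) = r + \omega(r)$) and then cite the standard regularity theory for almost-minimizers, including the Federer dimension reduction and Simons cone argument for the $\dim_{\mathcal{H}}\mathcal{S} \leq n-8$ bound; the paper cites Lin's thesis and Tamanini's 1984 memoir for exactly this, while you cite Tamanini (1982), Maggi, and Ambrosio--Fusco--Pallara, but the content is the same.
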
  

\begin{proof}
This is a direct consequence of Lemma \ref{Lem:AlMin} and known regularity for almost minimizing sets (see e.g. \cite[Chapter 1]{LinThesis}, \cite[Theorem 1.9]{Tamanini84}).
\end{proof}

\begin{cor} \label{cor:BdrReg} 
Under the notations of Lemma \ref{lem:ex}, suppose further for some $\delta > 0$ that $u$ satisfies
\[
\sup_{x_0 \in \Omega,\mathrm{dist}(x,\partial\Omega) < \delta}  \mathop{\mathrm{osc}}_{B_r(x_0) \cap \Omega} \ln u \leq \omega(r) \text{ for } 0 < r < r_0 \text{ and some $\omega$ satisfying \eqref{Eq:DiniC}}.
\]
Then for every point $a \in \partial E \cap \partial\Omega$, there is a ball $B_{r}(a)$ such that $\partial E \cap B_r(a)$ is a $C^1$-hypersurface.
\end{cor}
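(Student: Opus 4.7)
The plan is to leverage the local almost-minimality of $E$ established in Lemma \ref{Lem:AlMin}---which, as stated, applies at every $a\in\partial E$ including points on $\partial\Omega$, with competitors $F$ restricted by $\overline{F\Delta E}\subset B_r(a)\cap\Omega$---and then to invoke the $C^1$ boundary regularity theorem for perimeter almost minimizers against a smooth one-sided obstacle.

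Fix $a\in\partial E\cap\partial\Omega$ and pick $r_*>0$ so small that $B_{2r_*}(a)$ meets exactly one connected component $\partial\Omega_i$ of $\partial\Omega$, namely the one containing $a$. Then $\Omega\cap B_{2r_*}(a)$ is precisely one open side of the smooth hypersurface $\partial\Omega_i$ in the ball, so the admissible competitors in Lemma \ref{Lem:AlMin}---those $F$ with $\overline{F\Delta E}\subset B_\rho(a)\cap\Omega$ for $\rho<r_*$---are exactly the modifications of $E$ that respect the smooth one-sided obstacle $\partial\Omega_i$. Combining Lemma \ref{Lem:AlMin} with the hypothesis $\mathop{\mathrm{ess\,osc}}_{B_\rho(a)\cap\Omega}\ln u\le\omega(\rho)$, one obtains the almost-minimality inequality
\begin{equation*}
Per_{\mathring{g}}(E,B_\rho(a))\leq Per_{\mathring{g}}(F,B_\rho(a))+\Phi(\rho)\,\rho^{n-1},\qquad \Phi(\rho):=C\bigl(\rho+\omega(\rho)\bigr),
\end{equation*}
for every admissible $F$ and every $\rho\in(0,r_*]$. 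Hypothesis \eqref{Eq:DiniC} on $\omega$, together with $\int_0^{r_*}\rho^{-1/2}\,d\rho<\infty$, then gives $\int_0^{r_*}\Phi(\rho)^{1/2}\,d\rho/\rho<\infty$.

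Since $\partial\Omega_i$ is smooth and the excess $\Phi$ is of square-root Dini type, the $C^1$ boundary regularity theorem for perimeter almost minimizers against a smooth one-sided obstacle---provable, for instance, by locally straightening and reflecting across $\partial\Omega_i$ to reduce to the interior theory of Tamanini \cite{Tamanini82-JRAM, Tamanini84} already used in Corollary \ref{cor:IntReg}, or by a direct boundary-regularity adaptation of those techniques---yields $r'\in(0,r_*)$ such that $\partial E\cap B_{r'}(a)$ is a $C^1$ hypersurface. The main technical obstacle is the upgrade from square-root Dini excess to genuine $C^1$ regularity at the obstacle, as opposed to merely $C^{0,\alpha}$ regularity of the free boundary or H\"older continuity of the generalized normal: this is exactly the strength that \eqref{Eq:DiniC} is calibrated to deliver. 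Once it is in place, the smoothness of $\partial\Omega_i$ ensures that near $a$ either $\partial E$ coincides with $\partial\Omega_i$ (trivially regular) or it detaches with a well-defined tangent hyperplane at $a$ and is locally a $C^1$ graph, as claimed.
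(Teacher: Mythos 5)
Your reduction to Lemma \ref{Lem:AlMin} and the verification that the excess $\Phi(\rho)=C(\rho+\omega(\rho))$ satisfies the square-root Dini condition are fine, but the crux of the corollary --- $C^1$ regularity of $\partial E$ \emph{at a contact point} $a\in\partial E\cap\partial\Omega$ --- is not proved: you appeal to a ``$C^1$ boundary regularity theorem for perimeter almost minimizers against a smooth one-sided obstacle'' without a precise citation, offering only the suggestion that it could be obtained ``by locally straightening and reflecting across $\partial\Omega_i$'' or by ``a direct adaptation.'' That is exactly the statement being proved, and the proposed reflection route does not work for the obstacle problem: after straightening, the obstacle side is contained in $E$ near $a$, so the union of $E$ with its reflection is (locally) all of space, and neither the intersection nor any odd reflection of the free part of $\partial E$ inherits an almost-minimality property; reflection is a device for free-boundary (orthogonal contact) problems, not for one-sided obstacles. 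Your closing dichotomy --- ``either $\partial E$ coincides with $\partial\Omega_i$ or it detaches with a well-defined tangent hyperplane at $a$'' --- presupposes precisely the existence of a tangent plane at $a$, i.e.\ that $a$ is not a singular point of the almost-minimal boundary, which is the heart of the matter (recall the interior theory only excludes singularities up to dimension $n-8$, so for $n\geq 8$ something must rule out a singular point sitting on $\partial\Omega$).

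The paper's argument supplies the two missing ingredients. First, the obstacle is absorbed rather than respected: the proof of Lemma \ref{Lem:AlMin} actually yields the almost-minimality inequality for \emph{all} competitors $F$ with $\overline{F\Delta E}\subset B_r(x_0)$ (modifying $F$ by $\overline{\Omega_1}$, etc., costs only $O(r^n)$ because $\partial\Omega$ is smooth, via \cite[Proposition 1]{Tamanini82-JRAM}), so Tamanini's interior theory applies at $a$ with no boundary adaptation needed. Second, the singular alternative at $a$ is excluded by a blow-up argument: by \cite[Proposition 3.4]{Tamanini84} the tangent cone of $\partial E$ at $a$ is a minimal cone, and since near $a$ the set $E$ contains (or its complement contains) the smooth domain bounded by the relevant component of $\partial\Omega$, this cone bounds a set containing (or contained in) a half-space and must therefore be a hyperplane; hence $a$ belongs to the reduced boundary $\partial^*E$, and \cite[Theorem 1.9]{Tamanini84} gives the $C^1$ regularity of $\partial E$ in a ball around $a$. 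This tangent-cone classification is where the smoothness of $\partial\Omega$ is genuinely used, and it is absent from your proposal; as written, the proposal assumes the conclusion at the contact point.
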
  

\begin{proof}
By \cite[Proposition 3.4]{Tamanini84}, $\partial E$ has a minimal tangent cone at $a$ which must be a plane in view of the regularity of $\partial\Omega$. This implies that $a$ belongs to the reduced boundary of $E$. By \cite[Theorem 1.9]{Tamanini84}, $\partial E$ is a $C^1$-hypersurface in a neighborhood of $a$.
\end{proof}

\begin{lem} \label{lem:BdrDetach} 
Under the notations of Lemma \ref{lem:ex}, suppose in addition that $u$ is $C^1$ near $\partial\Omega$ and that $\partial\Omega$ has non-negative mean curvature $H_{\partial\Omega} \geq 0$ with respect to $g = g_u$ and the normal pointing towards $\Omega$. Then, if the mean curvature $H_{\partial\Omega_i}$ with respect to $g$ of some component $\partial\Omega_i$ of $\partial\Omega$ satisfies $H_{\partial\Omega_i} \not\equiv 0$, the minimization problem $J$ in \eqref{Eq:ObsProb} is achieved by some set $E$ with $\pa E\cap \pa \Omega_i =\emptyset$. 
\end{lem}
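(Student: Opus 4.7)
My plan is to argue by contradiction: assume that every minimizer $E$ of $J$ has $\partial E \cap \partial\Omega_i \neq \emptyset$, and derive a contradiction with the hypothesis $H_{\partial\Omega_i} \not\equiv 0$. Let $E_0$ be a minimizer of $J$ (provided by Lemma~\ref{lem:ex}) and set $T := \partial E_0 \cap \partial\Omega_i \neq \emptyset$. Since $u \in C^1$ up to $\partial\Omega$, $\ln u$ is continuous and hence satisfies the Dini condition \eqref{Eq:DiniC}, so Corollary~\ref{cor:BdrReg} applies and $\partial E_0$ is a $C^1$-hypersurface in a neighborhood of every point of $T$; because $\partial E_0$ coincides with the smooth $\partial\Omega_i$ on $T$, it is in fact smooth near $T$.

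Next I would apply a strong-maximum-principle argument to conclude $T = \partial\Omega_i$. Fixing $x_0 \in T$ and parametrizing $\partial E_0$ and $\partial\Omega_i$ as graphs $u_E$, $u_\Omega$ over the tangent plane at $x_0$ with the normal into $\Omega$ as the positive direction, the obstacle constraint forces $u_E \geq u_\Omega$ with equality on $T$ (this holds in all of the cases $i = 1$, $1 < i < \ell$, and $i = \ell$, after checking the one-sidedness of $E_0$ with respect to $\partial\Omega_i$). The Euler--Lagrange relations for the obstacle problem give $H[u_E] = 0$ on the free part of $\partial E_0$ and $H[u_E] = H_{\partial\Omega_i}$ on $T$, while $H[u_\Omega] = H_{\partial\Omega_i} \geq 0$ everywhere; hence $H[u_E] \leq H[u_\Omega]$. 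Writing $w := u_E - u_\Omega \geq 0$ and linearizing the quasilinear mean-curvature operator along an intermediate graph, I obtain $Lw \leq 0$ for a linear elliptic operator $L$ with no zeroth-order term (since mean curvature is invariant under translations in the graph direction). As $w(x_0) = 0$, the strong minimum principle for supersolutions forces $w \equiv 0$ in a neighborhood of $x_0$, so $T$ is relatively open in $\partial\Omega_i$. Being also relatively closed and $\partial\Omega_i$ connected, $T = \partial\Omega_i$.

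In the reduced case $T = \partial\Omega_i$, the obstacle structure rigidly determines $E_0$ in a neighborhood of $\partial\Omega_i$: for $i = 1$, $E_0 = \overline{\Omega_1} \sqcup F_0$ with $F_0$ separated from $\overline{\Omega_1}$, and analogous local descriptions hold for $i \in \{2, \ldots, \ell-1\}$ and $i = \ell$. I would then produce a competitor $\widetilde{E}$ satisfying $\partial\widetilde{E} \cap \partial\Omega_i = \emptyset$ and $Per_g(\widetilde{E}, \Omega_\ell) \leq Per_g(E_0, \Omega_\ell)$, using the strict positivity of $H_{\partial\Omega_i}$ on some open subset of $\partial\Omega_i$ to certify the perimeter inequality. \emph{This last step is the main obstacle}: a naive ``push-off'' of $\partial E_0$ from $\partial\Omega_i$ by a parallel collar would actually \emph{increase} the perimeter, because the parallel surface of a mean-convex boundary has strictly larger area when $\int H\,d\mathcal{H}^{n-1}_g > 0$. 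The construction must therefore be more subtle---for instance, trading the $\partial\Omega_i$-piece of $\partial E_0$ against a suitable modification of $F_0$ (or its analogue in the other cases) so as to balance the first-variation loss against the strict ellipticity produced in the previous step. The quantitative positivity of $\int H_{\partial\Omega_i}$ combined with the strict inequality from the strong maximum principle are the crucial ingredients that make such a competitor possible, and handling the three cases $i = 1$, $1 < i < \ell$, $i = \ell$ separately is expected to form the technical heart of the proof.
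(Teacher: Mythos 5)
Your first two steps reproduce, in substance, the paper's own argument: boundary regularity of $\partial E_0$ at contact points via Corollary \ref{cor:BdrReg}, the representation of $\partial\Omega_i$ and $\partial E_0$ as ordered graphs touching at a point, the two one-sided variational inequalities (mean convexity of the obstacle on one side, minimality of $E_0$ with the obstacle constraint on the other), and the strong maximum principle for the linearized quasilinear operator. Two small flaws there: the assertion that $\partial E_0$ is ``smooth near $T$'' does not follow from coincidence \emph{on} $T$ alone (but it is not needed, since the $C^1$ graphs and the weak form of the inequalities suffice, which is how the paper phrases it); and the linearized operator does \emph{not} lack a zeroth-order term, because the area density $e(x',s,p)$ of $g_u$ depends on $s$ (the metric $u^{\frac{4}{n-2}}\mathring g$ is not translation invariant in the graph direction) --- the strong minimum principle still applies because $w\ge 0$ attains the minimum value $0$, so the positive part of the zeroth-order coefficient can be absorbed.

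The genuine gap is your final step, and it is also a wrong turn: after you have shown that $\partial E_0$ coincides with $\partial\Omega_i$ (locally, and then on all of $\partial\Omega_i$ by your open/closed/connectedness argument), there is nothing left to construct. On the coincidence set the \emph{same} graph satisfies both differential inequalities: the minimizer's one-sided Euler--Lagrange inequality $-\partial_i \partial_{p_i} e + \partial_s e \ge 0$ (only perturbations pushing $\partial E_0$ into $\Omega$ are admissible there) and the inequality $-\partial_i \partial_{p_i} e + \partial_s e \le 0$ coming from $H_{\partial\Omega_i}\ge 0$. Hence the weak mean curvature vanishes, i.e.\ $H_{\partial\Omega_i}=0$ near the touching point, and by connectedness $H_{\partial\Omega_i}\equiv 0$ on $\partial\Omega_i$, contradicting the hypothesis $H_{\partial\Omega_i}\not\equiv 0$. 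This is exactly the paper's proof: it establishes the contrapositive ``if the minimizer $E$ of Lemma \ref{lem:ex} satisfies $\partial E\cap\partial\Omega_i\neq\emptyset$, then $H_{\partial\Omega_i}\equiv 0$,'' and no detached competitor $\widetilde E$ with controlled perimeter is ever needed. Your proposal instead tries to reach a contradiction by trading the $\partial\Omega_i$-piece of $\partial E_0$ for a pushed-off hypersurface, correctly observes that a naive parallel push-off increases $Per_g$ precisely because of the mean convexity, and leaves the required construction open --- so as written the proof is incomplete, even though you had already written down the two facts ($H[u_E]$ vanishes weakly where variations are two-sided, $H[u_E]=H_{\partial\Omega_i}$ on the contact set) whose combination finishes the argument.
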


\begin{proof}
By Lemma \ref{lem:ex}, the minimization problem $J$ in \eqref{Eq:ObsProb} is achieved by some set $E$. To conclude, we show that the statement $\partial E \cap \partial\Omega_i \neq \emptyset$ implies $H_{\partial\Omega_i} \equiv 0$.

Pick $a \in \partial E \cap \partial\Omega_i$. By Corollary \ref{cor:BdrReg}, $\partial E$ is a $C^1$-hypersurface near $x_0$. We then work in a local coordinate system $x = (x', x^n) = (x^1, \ldots, x^n)$ near $x_0 \cong 0$ such that the hypersurface $\partial \Omega_i$ and $\partial E$ are represented as $\{x^n = \underline{f}(x')\}$ and $\{x^n = \overline{f}(x')\}$ with $\underline{f} \in C^2$, $\overline{f} \in C^{1}$, $\underline{f}(x') \leq \overline{f}(x')$ in $\{|x'|^2 := (x^1)^2 + \ldots + (x^{n-1})^2 < r_0^2\}$, $\underline{f}(0) = \overline{f}(0) = 0$ and $\partial\underline{f}(0) = \partial\overline{f}(0) = 0$. The fact that $\partial \Omega_1$ has non-negative mean curvature and that $E$ solves the minimization problem $J$ in \eqref{Eq:ObsProb} imply
\begin{align*}
-\partial_i \partial_{p_i} e(x', \underline{f}(x'), \partial \underline{f}(x')) + \partial_s e(x', \underline{f}(x'), \partial \underline{f}(x')) &\leq 0 \text{ in }\{ |x'| < r_0\},\\
-\partial_i \partial_{p_i} e(x', \overline{f}(x'), \partial \overline{f}(x')) + \partial_s e(x', \overline{f}(x'), \partial \overline{f}(x'))  &\geq 0 \text{ in }\{ |x'| < r_0\}
\end{align*}
in the weak sense, where $e(x',\psi(x'), \partial \psi(x'))$ denotes the volume density of the metric induced by $g$ on the graph of a function $\psi$, and $s$ and $p$ are dummy variables for $\psi$ and $\partial\psi$. Since $(\partial_{p_i} \partial_{p_j} e)$ is positive definite, $\underline{f} \leq \overline{f}$ in $\{|x'| < r_0\}$ and $\underline{f}(0) = \overline{f}(0) = 0$, a standard PDE argument shows that $\underline{f} \equiv \overline{f}$ in $\{|x'| < r_0\}$, i.e. $\partial\Omega_i$ coincides with $\partial E$ and has zero mean curvature near $x_0$. A standard argument then shows that the mean curvature of $\partial\Omega_i$ is zero on all of $\partial\Omega_i$.
\end{proof}

We conclude this section with the following proposition.

\begin{prop} \label{prop:Conc} 
Let $\Omega = \Omega_\ell \setminus (\Omega_1 \cup \ldots \cup \Omega_{\ell-1})$ be a bounded domain in $\R^n$, $n \geq 3$, with smooth and disconnected boundary $\partial \Omega = \partial\Omega_1 \cup \ldots \cup \partial \Omega_\ell$, $\ell \geq 2$. Suppose $\ln u \in C^1(\bar\Omega_\ell)$ and that $\partial\Omega$ has non-negative mean curvature $H_{\partial\Omega} \geq 0$ with respect to $g = g_u$ and the normal pointing towards $\Omega$. Then, unless $\partial\Omega$ has a component $\partial\Omega_i$ with zero mean curvature $H_{\partial\Omega_i} \equiv 0$ with respect to $g$, the minimization problem $J$ in \eqref{Eq:ObsProb} is achieved by some set $E$ such that $\pa E\cap \pa \Omega =\emptyset$ and, for some set $\mathcal{S}$ of Hausdorff dimension at most $n - 8$, $\partial E \setminus \mathcal{S}$ is a $W^{2,p}$-hypersurface with zero mean curvature with respect to $g$ for every $1 \leq p < \infty$.
\end{prop}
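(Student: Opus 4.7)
My plan proceeds in four steps, leveraging the preceding lemmas and corollaries.

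\emph{Step 1 (existence and disjointness from $\partial\Omega$).} Since $\ln u \in C^1(\overline{\Omega_\ell}) \subset L^\infty(\Omega_\ell) \cap W^{1,1}(\Omega_\ell)$, Lemma \ref{lem:ex} yields a minimizer $E$ of $J$. Inspecting the proof of Lemma \ref{lem:BdrDetach}, one sees that it actually establishes the following statement about any fixed minimizer $E$: if $\partial E \cap \partial\Omega_i \neq \emptyset$ for some $i$, then $H_{\partial\Omega_i} \equiv 0$ on $\partial\Omega_i$. Under our hypothesis no such index exists, hence $\partial E \cap \partial\Omega_i = \emptyset$ for every $i$, and so $\partial E \cap \partial\Omega = \emptyset$.

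\emph{Step 2 (interior $C^1$ regularity).} Since $\ln u \in C^1(\overline{\Omega_\ell})$, the modulus $\omega(r) = Cr$ trivially satisfies the Dini condition \eqref{Eq:DiniC}. Corollary \ref{cor:IntReg} then produces a closed set $\mathcal{S} \subset \partial E$ of Hausdorff dimension at most $n - 8$ such that $\partial E \setminus \mathcal{S}$ is a $C^1$ hypersurface.

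\emph{Step 3 ($W^{2,p}$ regularity and vanishing mean curvature).} Away from $\mathcal{S}$ the obstacle is inactive by Step 1, so any smooth compactly supported normal perturbation is admissible for $J$. The first variation formula for $Per_g$ then yields, in a local graph representation $x^n = f(x')$ with $f \in C^1$, the weak quasilinear minimal surface equation
\[
 -\sum_i \partial_i \bigl( \partial_{p_i} e(x', f, \nabla f) \bigr) + \partial_s e(x', f, \nabla f) = 0,
\]
where $e(x', s, p)$ is the area density induced by $g = u^{4/(n-2)}\mathring{g}$ on graphs. Because $u \in C^1$, the derivatives $\partial_{p_ip_j}e,\, \partial_{p_ix_j}e,\, \partial_{p_is}e,\, \partial_s e$ are continuous, and freezing the nonlinear arguments at $(f(x'), \nabla f(x'))$ casts the equation as a linear uniformly elliptic equation with continuous coefficients and bounded right-hand side. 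Standard Calder\'on--Zygmund $W^{2,p}$ estimates then yield $f \in W^{2,p}_{\mathrm{loc}}$ for every $p < \infty$, and the equation holds almost everywhere, giving vanishing mean curvature of $\partial E \setminus \mathcal{S}$ with respect to $g$.

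I expect Step 3 to be the main technical point: only $C^1$ regularity of the ambient conformal factor is available, which prevents any bootstrap to $C^2$ or smoother minimal hypersurfaces. The $W^{2,p}$ conclusion is however enough to make pointwise almost-everywhere sense of the mean curvature, which is exactly what is needed for the subsequent geometric-measure-theoretic arguments in Subsection \ref{SSec:3.2}.
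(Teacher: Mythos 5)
Your proposal is correct and follows essentially the same route as the paper, whose proof simply combines Lemma \ref{lem:BdrDetach} (whose argument indeed shows that $\partial E\cap\partial\Omega_i\neq\emptyset$ forces $H_{\partial\Omega_i}\equiv 0$), Corollary \ref{cor:IntReg} (applicable since $\ln u\in C^1$ gives the Dini modulus $\omega(r)=Cr$), and ``standard regularity theories for quasilinear elliptic equations'' for the $W^{2,p}$ and zero-mean-curvature conclusions. The only minor imprecision is in Step 3: a $C^1$ weak solution has no second derivatives to which non-divergence-form Calder\'on--Zygmund estimates apply directly, so the standard implementation is via difference quotients of the divergence-form Euler--Lagrange equation together with interior $W^{1,p}$ estimates for divergence-form linear equations with continuous (VMO) coefficients and right-hand side written as the divergence of a bounded field, which yields exactly the asserted $f\in W^{2,p}_{\mathrm{loc}}$ for every $p<\infty$ and the almost-everywhere vanishing of the $g$-mean curvature.
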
  

Note that in the above statement, the assumption $\ln u \in C^1(\bar\Omega_\ell)$ is used to defined the mean curvatures of $\partial\Omega$ and of $\partial E \setminus \mathcal{S}$.

\begin{proof}
The result follows from Corollary \ref{cor:IntReg}, Lemma \ref{lem:BdrDetach} and standard regularity theories for quasilinear elliptic equations.
\end{proof}

\subsection{An obstruction result and proof of Theorem \ref{thm:MX}}\label{SSec:3.2}

The next ingredient of the proof of Theorem \ref{thm:MX} is the following obstruction result, which first appeared in \cite{LiNg21-JMS} under stronger regularity assumptions.

\begin{lem}\label{Lem:12XI21MC}
Assume $p \geq 2$. Let $\Omega$ be an open subset of $\mathbb{R}^n$, $n \geq 3$, $f: \bar B \rightarrow \Omega$ be a $C^1 \cap W^{2,p}$ immersion of the closed $(n-1)$-dimensional unit ball $\bar B$ into $\Omega$, $\nu$ be a continuous normal field along $f(B)$ with $|\nu|_{\mathring{g}} = 1$, $\tilde g = f^* \mathring{g}$. If $u \in C^1(\Omega)$ satisfies $\lambda(-A^u) \in \bar\Gamma_2$ in $\Omega$ in the viscosity sense, then $\tilde u := u \circ f$ satisfies in the weak sense the inequality 
\begin{equation}
 \Delta_{\tilde g} \tilde u + \frac{n-2}{4(n-1)} H_{u}^2 \circ f \tilde u^{\frac{n+2}{n-2}} - \frac{n-2}{4(n-1)} H^2 \circ f \tilde u   - \frac{1}{(n-2) \tilde u}\,|\nabla_{\tilde g} \tilde u|^2
 	\geq 0 \text{ in } B,
	\label{Eq:MCIneql}
\end{equation}
where $H$ and $H_u$ denote the mean curvatures of $f(B)$ with respect to $\mathring{g}$ and $g_u$ in the direction of the normal field $\nu$, respectively. 
\end{lem}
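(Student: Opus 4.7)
The core is an almost purely algebraic computation in the smooth case, followed by a viscosity / approximation argument to reach the stated regularity. The two essential ingredients are a one-line consequence of the $\bar\Gamma_2$ condition, and the classical formula for how the Laplacian decomposes along a hypersurface together with the conformal change of mean curvature (as in \eqref{Eq:06IV20-X1}).

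\textbf{Step 1 (pointwise matrix inequality, smooth case).} Assume first $u \in C^2(\Omega)$ and $f \in C^\infty$. By the standard G\aa rding-type inequality for the closed cone $\bar\Gamma_2$, one has $\sigma_1(\lambda \mid i) \geq 0$ for each eigenvalue index $i$; equivalently, the largest eigenvalue of $-A^u$ is at most its trace. Hence for any $\mathring g$-unit vector $\nu$,
\[
A^u(\nu,\nu) \ \geq\ \operatorname{tr}(A^u) \ =\ -\tfrac{2}{n-2}\,u^{-\frac{n+2}{n-2}}\Delta u.
\]
Writing out $A^u(\nu,\nu)$ using its definition and multiplying through by $\tfrac{n-2}{2}u^{(n+2)/(n-2)}>0$, this rearranges to
\[
\Delta u - \partial_\nu^2 u + \tfrac{n}{n-2}\,u^{-1}(\partial_\nu u)^2 - \tfrac{1}{n-2}\,u^{-1}|\nabla u|^2 \ \geq \ 0 \qquad \text{on } f(B).
\]

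\textbf{Step 2 (submanifold + conformal substitution).} In Fermi coordinates along $f(B)$ one has the standard splitting
\[
\Delta u\big|_{f(B)} = \Delta_{\tilde g}\tilde u \;-\; H\,\partial_\nu u \;+\; \partial_\nu^2 u,\qquad |\nabla u|^2 = (\partial_\nu u)^2 + |\nabla_{\tilde g}\tilde u|^2,
\]
which eliminates $\partial_\nu^2 u$ and leaves
\[
\Delta_{\tilde g}\tilde u - H\,\partial_\nu u + \tfrac{n-1}{n-2}\,\tilde u^{-1}(\partial_\nu u)^2 - \tfrac{1}{n-2}\,\tilde u^{-1}|\nabla_{\tilde g}\tilde u|^2 \ \geq \ 0.
\]
Now plug in the conformal change of mean curvature $\partial_\nu u = \tfrac{n-2}{2(n-1)}\,\tilde u\bigl(H - \tilde u^{2/(n-2)}H_u\bigr)$ (equivalent to \eqref{Eq:06IV20-X1}). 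The terms $-H\partial_\nu u$ and $\tfrac{n-1}{n-2}\tilde u^{-1}(\partial_\nu u)^2$ combine as a factorable quadratic in $(H - \tilde u^{2/(n-2)}H_u)$ and collapse exactly to $\tfrac{n-2}{4(n-1)}\bigl(\tilde u^{(n+2)/(n-2)}H_u^2 - \tilde u\,H^2\bigr)$, producing \eqref{Eq:MCIneql}.

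\textbf{Step 3 (viscosity testing and $W^{2,p}$ approximation).} For $u\in C^1$ with the cone condition in viscosity sense and $f\in C^\infty$, take $\psi\in C^2(B)$ touching $\tilde u$ from above at $x_0$. Using Fermi coordinates $(\bar x, t)$ about $f(x_0)$, define the ambient test function
\[
\Psi(y) \;=\; \psi(\bar x(y)) \;+\; a\,t(y) \;+\; K\,t(y)^2,\qquad a := \partial_\nu u(f(x_0)).
\]
Since $\psi$ touches $\tilde u$ from above and $\tilde u\in C^1$, the tangential gradients agree at $x_0$, so along $f(B)$ one has $\Psi - u \geq 0$ to second order in $\bar x - x_0$; along the normal direction the choice of $a$ kills the first-order error and $Kt^2$ dominates provided $K$ is large enough. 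Applying the viscosity condition $\lambda(-A^\Psi(f(x_0)))\in\bar\Gamma_2$ and then Steps 1-2 to $\Psi$, the $2K$ contribution cancels between $\Delta\Psi$ and $\partial_\nu^2\Psi$, leaving exactly the desired pointwise inequality $L[\psi](x_0)\geq 0$. To remove the extra smoothness on $f$, approximate $f$ by smooth immersions $f_\varepsilon \to f$ in $C^1 \cap W^{2,p}$: the induced metric, $\nu$, $H$, and (using $u\in C^1$) $H_u$ converge in $W^{1,p}$ and $L^p$ respectively, so the weak / integrated form of \eqref{Eq:MCIneql} for $(u,f_\varepsilon)$ passes to the limit.

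\textbf{Main obstacle.} The delicate point is Step 3: because $u$ is only $C^1$, a literal construction of $\Psi$ may fail to satisfy $\Psi \geq u$ near $f(x_0)$ since we lack a second-order Taylor estimate for $u$ in the normal direction. The standard remedy is to replace $u$ by its sup-convolution $u^\eta(y) = \sup_z\bigl[u(z)-\tfrac{1}{\eta}|y-z|^2\bigr]$, which is semiconvex, satisfies the cone condition with an $o(1)$ error, and permits the construction of $\Psi$ via pointwise second-order data; then let $\eta \to 0$. This, together with the $f_\varepsilon \to f$ limit in the weak formulation, is where the bulk of the technical work lies, but it uses no ingredient beyond the pointwise algebra of Steps 1-2.
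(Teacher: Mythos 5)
Your Steps 1 and 2 are correct, and they are in substance exactly the algebra the paper uses: the inequality $A^u(\nu,\nu)\ge \operatorname{tr}(A^u)$ is the fact, quoted in the paper from \cite[Lemma 2.1]{LiNg21-JMS}, that $\lambda(M)\in\bar\Gamma_2$ and $|m|=1$ imply $M_{ij}(\delta_{ij}-m_im_j)\ge 0$; combined with the tangential/normal splitting of $\nabla^2 u$ along $f(B)$ and the conformal relation \eqref{Eq:06IV20-X1}, the quadratic in $\partial_\nu u$ collapses to $\frac{n-2}{4(n-1)}\bigl(H_u^2\,\tilde u^{\frac{n+2}{n-2}}-H^2\,\tilde u\bigr)$ just as you compute. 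The problem is Step 3, and the remedy you propose does not close the gap you correctly identify. Sup-convolution regularizes on the wrong side for your construction: $u^\eta$ is semiconvex, i.e.\ it admits paraboloid touching from \emph{below}, whereas to guarantee $\Psi\ge u^\eta$ near $f(x_0)$ you need a one-sided second-order \emph{upper} bound in the normal direction (semiconcavity); the inf-convolution, which would give that, does not preserve the inclusion $\lambda(-A^u)\in\bar\Gamma_2$, since that inclusion is tested by functions touching from above. The fallback reading of your sentence --- use the a.e.\ (Alexandrov) second-order expansions of $u^\eta$ pointwise instead of building $\Psi$ --- also fails as stated, because those expansions exist only Lebesgue-a.e.\ in $\Omega$, while you need second-order data on the hypersurface $f(B)$, a Lebesgue-null set; making this work would require an additional slicing argument over parallel hypersurfaces and strong convergence of the normal derivatives (which define $H_u$) as $\eta\to 0$, none of which is sketched. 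The uncontrolled term $[a-\partial_\nu u(\bar x,0)]\,t$, which is $o(1)\cdot|t|$ and is not dominated by $K t^2+(\psi-\tilde u)(\bar x)$, remains the essential difficulty.

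The paper closes this gap by regularizing $u$ itself rather than testing pointwise: writing $w=\frac{2}{n-2}\ln u$, it shows (via the convex dual cone $U_2^*$ and an integrated form of the inclusion against $U_2^*$-valued test matrices, as in \cite{LiNgGreen}) that the mollifications $u_\ell$ satisfy $\lambda(-A^{u_\ell}+\varepsilon_\ell I)\in\bar\Gamma_2$ with $\varepsilon_\ell\to 0$; this is precisely where $u\in C^1$ enters, since the mollified quadratic gradient terms then converge uniformly, so the error is genuinely $o(1)$. One then runs your Steps 1--2 classically for the smooth pair $(u_\ell,f_\ell)$, with $f_\ell\to f$ in $C^1\cap W^{2,p}$, and passes to the limit in the integrated inequality, where $C^1$ convergence of $u_\ell$ gives convergence of $\partial_{\nu_\ell}u_\ell$ and hence of $H_{u_\ell}$, and $p\ge 2$ handles the curvature terms. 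Some such approximation of $u$ by genuinely $C^2$ functions satisfying the cone condition up to $o(1)$ (or an equivalent device) is the missing ingredient in your Step 3; without it the argument as written would fail.
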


Note that the assumption $f \in C^1 \cap W^{2,p}$ with $p \geq 2$ implies that $H \circ f \in L^2(B)$. Since $u \in C^1$, the connection form for $g_u$ is $C^0$. This together with $f \in C^1 \cap W^{2,p}$ with $p \geq 2$ imply that $H_u$ is well-defined and $H_u \circ f \in L^2(B)$. Moreover, we have
\[
\partial_\nu u + \frac{n-2}{2(n-1)} H u = \frac{n-2}{2(n-1)} H_u u^{\frac{n}{n-2}}.
\]
If $u$ is only locally Lipschitz in $\Omega$, the meaning of \eqref{Eq:MCIneql} is unclear as $H_u$ is ill-defined: the normal derivative $\partial_\nu u$, in the sense of normal trace, may not belong to any Lebesgue spaces and $\partial_\nu u$ may not be the same as $-\partial_{-\nu} u$.

\begin{cor}\label{Cor:Obs1}
Assume $p \geq 2$. Let $\Omega$ be an open subset of $\mathbb{R}^n$, $n \geq 3$, and $\Sigma$ be a $C^1 \cap W^{2,p}$ embedded hypersurface in $\Omega$ with or without boundary, $int(\Sigma)$ be its interior, $\nu$ be a continuous normal field along $int(\Sigma)$ with $|\nu|_{\mathring{g}} = 1$, $\tilde g$ be the metric induced by $\mathring{g}$ on $int(\Sigma)$. If $u \in C^1(\Omega)$ satisfies $\lambda(-A^u) \in \bar\Gamma_2$ in $\Omega$ in the viscosity sense, then $u$ satisfies in the weak sense the inequality 
\begin{equation}
 \Delta_{\tilde g} u + \frac{n-2}{4(n-1)} H_{u}^2 u^{\frac{n+2}{n-2}} - \frac{n-2}{4(n-1)} H^2 u   - \frac{1}{(n-2) u}\,|\nabla_{\tilde g} u|^2 
 	\geq 0 \text{ in } int(\Sigma),
	\label{Eq:MCCor}
\end{equation}
where $H$ and $H_u$ denote the mean curvatures of $int(\Sigma)$ with respect to $\mathring{g}$ and $g_u$ in the direction of the normal field $\nu$, respectively. 
\end{cor}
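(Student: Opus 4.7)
The plan is to derive Corollary \ref{Cor:Obs1} from Lemma \ref{Lem:12XI21MC} by a straightforward localization argument, exploiting the fact that the inequality \eqref{Eq:MCCor} is a local differential inequality in the weak sense on $int(\Sigma)$.

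First I would observe that, since $\Sigma$ is a $C^1 \cap W^{2,p}$ embedded hypersurface, every point $x_0 \in int(\Sigma)$ admits an open neighborhood $U \subset int(\Sigma)$ together with a $C^1 \cap W^{2,p}$ parametrization $f \colon \bar B \to \Omega$ whose image is $\bar U$, with $f$ a homeomorphism onto its image and $f(0) = x_0$. By choosing, if necessary, a reflection of one coordinate, we may arrange that the unit normal $\tilde\nu$ constructed from $f$ coincides with the given normal field $\nu$ on $U$. Then the induced metric $\tilde g|_U$ pulls back to $f^\ast \mathring{g}$ on $B$, and the mean curvatures $H$ and $H_u$ of $int(\Sigma)$ restricted to $U$ agree with those of $f(B)$ in the direction of $\tilde\nu$ given by Lemma \ref{Lem:12XI21MC}.

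Second, I would apply Lemma \ref{Lem:12XI21MC} to the immersion $f$ to conclude that $\tilde u := u \circ f$ satisfies inequality \eqref{Eq:MCIneql} in the weak sense on $B$. Since $f$ restricted to $B$ is a $C^1 \cap W^{2,p}$ diffeomorphism onto $U$ that intertwines $\tilde g|_U$ with $f^\ast \mathring{g}$, the Laplace--Beltrami operators, gradients, and volume forms transform covariantly under $f$. Pulling the weak inequality back through $f$ therefore yields that $u$ satisfies inequality \eqref{Eq:MCCor} in the weak sense on $U$.

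Third, since inequality \eqref{Eq:MCCor} is a condition to be tested against non-negative test functions $\varphi \in C_c^\infty(int(\Sigma))$, I would cover $int(\Sigma)$ by a locally finite collection of such neighborhoods $\{U_\alpha\}$ and choose a subordinate $C_c^\infty$ partition of unity $\{\chi_\alpha\}$. Any non-negative $\varphi \in C_c^\infty(int(\Sigma))$ decomposes as $\sum_\alpha \chi_\alpha \varphi$ with each $\chi_\alpha \varphi$ supported in one $U_\alpha$; summing the local inequalities obtained in the previous step and using dominated convergence gives the global weak inequality on $int(\Sigma)$.

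The main point requiring care is not a deep obstacle but the verification that all geometric quantities entering \eqref{Eq:MCIneql} (Laplace--Beltrami, gradient, $H$, and $H_u$) transform correctly under $f$ and that $H_u \circ f \in L^2(B)$, which follows from $u \in C^1$ and $f \in C^1 \cap W^{2,p}$ with $p \geq 2$ as noted after Lemma \ref{Lem:12XI21MC}. Once the orientation of $\nu$ is reconciled with the parametrization, the corollary follows at once.
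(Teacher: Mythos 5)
Your proposal is correct and is essentially the paper's (implicit) derivation: the paper offers no separate proof of Corollary \ref{Cor:Obs1}, treating it as an immediate consequence of Lemma \ref{Lem:12XI21MC} obtained by covering $int(\Sigma)$ with local $C^1 \cap W^{2,p}$ parametrizations, applying the lemma in each chart, and using the locality of the weak inequality to patch — exactly your argument. The only cosmetic remark is that the partition-of-unity/dominated-convergence step is unnecessary overhead, since a weak differential inequality tested against compactly supported nonnegative functions is local by definition.
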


\begin{proof}[Proof of Lemma \ref{Lem:12XI21MC}] We adapt the argument in \cite{LiNg21-JMS}. 

Shrinking $\Omega$ slightly, we may assume that $u \in C^1(\tilde\Omega)$ for some open set $\tilde\Omega \supset \bar\Omega$ and $\lambda(-A^u) \in \bar\Gamma_2$ in the viscosity sense in $\tilde\Omega$. Working locally if necessary, we may also assume without loss of generality that $f$ is an embedding. We select $f_\ell \in C^2(\bar B)$ such that $f_\ell \rightarrow f$ in $C^1 \cap W^{2,p}$ and a unit normal vector field $\nu_\ell$ along $f_\ell(\bar B)$ such that $\nu_\ell \rightarrow \nu$ in $C^0 \cap W^{1,p}$.

\medskip
\noindent\underline{Claim:} There exist $u_\ell \in C^2(\bar\Omega)$ and $\varepsilon_\ell \rightarrow 0$ such that $u_\ell \rightarrow u$ in $C^1(\Omega)$ and $\lambda(-A^{u_\ell} + \varepsilon_\ell I) \in \bar\Gamma_2$ in $\Omega$. (See \cite[Section 2]{LiNgGreen} for a related discussion concerning lower Ricci curvature bounds for continuous metrics which are conformal to smooth metrics.)

\medskip
\noindent\underline{Proof of the claim:} Let $w = \frac{2}{n-2}\ln u$ so that
\[
A^u = e^{-2w}\Big[-\nabla^2 w + \nabla w \otimes \nabla w- \frac{1}{2} |\nabla w|^2 I\Big] =: e^{-2w}S(w).
\]
Let $U_2$ be the set of symmetric $n \times n$ matrices whose eigenvalues belong to $\Gamma_2$. Then $U_2$ is a convex cone with vertex at the zero matrix (see e.g. \cite[Lemma B.1]{LiNgGreen}). Let $U_2^*$ be the cone dual to $U_2$, i.e.
\[
U_2^* =\Big\{\text{ symmetric } n \times n \text{ matrices } a = (a_{ij}): \sum_{1 \leq i,j \leq n} a_{ij} b_{ij} > 0 \text{ for all } b = (b_{ij}) \in U_2\Big\}.
\]
Note that $U_2^*$ is a subcone of the cone of positive definite symmetric $n \times n$ matrices.

Arguing as in the proof of \cite[Proposition 2.5]{LiNgGreen} and using that $-\lambda(A^u) \in \bar\Gamma_2$ in the viscosity sense, we have for any $a \in C_c^\infty(\Omega; U_2^*)$ that
\begin{equation}
\int_\Omega \Big[ -\nabla_i a_{ij} \nabla_j w  - a_{ij} \nabla_i w \nabla_j w + \frac{1}{2} |\nabla w|^2 \textrm{tr}(a)\Big]\,\ud x \geq 0.
	\label{Eq:04III22-App1}
\end{equation}
(This can be viewed as the weak sense of the inclusion $-S(w) \in \bar U_2$.)

Having \eqref{Eq:04III22-App1} at hand, we can apply the proof of \cite[Proposition 2.6]{LiNgGreen} to obtain the claim. Since the situation in \cite{LiNgGreen} was more sophisticated due to the generality considered there, we provide the details here for the readers' convenience.

Let $\varrho \in C_c^\infty(\mathbb{R})$ be a non-negative even function with $|\mathbb{S}^{n-1}|\int_0^\infty \varrho(t)\,t^{n-1}\,\ud t = 1$, $\varrho_\ell(x) = \ell^n \varrho(\ell |x|)$, $w_\ell = (\varrho_\ell * w)|_{\Omega}$ (for sufficiently large $\ell$) and $u_\ell = e^{\frac{n-2}{2}w_\ell}$. 

We have, when $\textrm{dist}(x,\partial\tilde\Omega) > 1/\ell$,
\begin{align*}
\nabla_i \nabla_j w_\ell(x) = - \int_{\tilde\Omega} \nabla_{y_i}  \varrho_\ell(x - y) \nabla_{y_j} w(y)\,\ud y.
\end{align*}
Fix some $a \in C_c^\infty(\Omega; U_2^*)$. For $\ell > \textrm{dist}(\textrm{Supp}(a), \partial\Omega)^{-1}$, we have
\begin{align*}
\int_\Omega a_{ij}(x) \nabla_i \nabla_j w_\ell(x)\ud x
	&= - \int_\Omega a_{ij}(x) \int_{\tilde\Omega} \nabla_{y_i}  \varrho_\ell(x - y) \nabla_{y_j} w(y)\,\ud y\,\ud x\\
	&= - \int_{\Omega} \nabla_{y_j} w(y)  \int_{\Omega} a_{ij}(x) \nabla_{y_i}  \varrho_\ell(x - y) \,\ud x \,\ud y\\
	&= - \int_{\Omega} \nabla_{y_j} w(y)  \nabla_{y_i} a^\ell_{ij}(y)\,\ud y
\end{align*}
where $a^\ell = (a^\ell_{ij}) = (a_{ij} * \varrho_\ell)$. As $U_2^*$ is convex and $a \in C_c^\infty(\Omega; U_2^*)$, we have that $a^\ell \in C_c^\infty(\Omega; U_2^*)$. Therefore, applying \eqref{Eq:04III22-App1} to the right hand side of the above identity, we have 
\begin{align*}
&\int_\Omega a_{ij}(x) \nabla_i \nabla_j w_\ell(x)\ud x
	=  - \int_{\Omega} \nabla_{y_j} w(y)  \nabla_{y_i} a^\ell_{ij}(y)\,\ud y\\
 	&\qquad\geq \int_\Omega \Big[ a^\ell_{ij}(y) \nabla_i w(y) \nabla_j w(y) - \frac{1}{2} |\nabla w(y)|^2 \textrm{tr}(a^\ell(y))\Big]\,\ud y\\
	&\qquad= \int_\Omega \int_\Omega \varrho_\ell(y-x)\Big[ a_{ij}(x) \nabla_i w(y) \nabla_j w(y) - \frac{1}{2} |\nabla w(y)|^2 \textrm{tr}(a(x))\Big]\,\ud x\,\ud y\\
	&\qquad= \int_\Omega a_{ij}(x)  \int_\Omega \varrho_\ell(y-x)  \nabla_i w(y) \nabla_j w(y)\,\ud y\,\ud x\\
		&\qquad\qquad  - \frac{1}{2} \int_\Omega\textrm{tr}(a(x)) \int_\Omega \varrho_\ell(y-x) |\nabla w(y)|^2\,\ud y\,\ud x\\
	&\qquad= \int_\Omega \Big[a_{ij}(x)  [\varrho_\ell * (\nabla_i w \nabla _j w)](x) - \frac{1}{2} \textrm{tr}(a(x)) [\varrho_\ell * (|\nabla w|^2)](x)\Big]\,\ud x.
\end{align*}
Recalling that $w \in C^1(\tilde\Omega)$, we have that $\varrho_\ell * (\nabla_i w \nabla _j w) - \nabla_i w_\ell \nabla_j w_\ell$ and $\varrho_\ell * (|\nabla w|^2) - |\nabla w_\ell|^2$ converge uniformly in $\bar\Omega$ to zero. We hence deduce from the above inequality that 
\begin{align*}
\int_\Omega a_{ij} \nabla_i \nabla_j w_\ell\ud x
	&\geq \int_\Omega \Big[a_{ij} \nabla_i w_\ell \nabla _j w_\ell - \frac{1}{2} \textrm{tr}(a) |\nabla w_\ell|^2 - \varepsilon_\ell \textrm{tr}(a) \Big]\,\ud x
\end{align*}
for some $\varepsilon_\ell \rightarrow 0$ independent of $a$. Since $a$ is arbitrary in $C_c^\infty(\Omega; U_2^*)$ and $w_\ell$ is smooth, the above implies that $-S(w_\ell) + \varepsilon_\ell I \in \bar U_2$ in $\Omega$. Slightly adjusting $\varepsilon_\ell$ by a multiplicative factor, we have $\lambda(-A^{u_\ell} + \varepsilon_\ell I) \in \bar\Gamma_2$ in $\Omega$, which proves the claim.

To proceed, recall the following fact from \cite[Lemma 2.1]{LiNg21-JMS}: If $M$ is a symmetric $n\times n$ matrix with $\lambda(M) \in \bar\Gamma_2$ and $m \in \mathbb{R}^n$ is a unit vector, then $M_{ij}(\delta_{ij} - m_i m_j) \geq 0$. Using this fact with $M =  \frac{n-2}{2} u_\ell^{\frac{n+2}{n-2}}(-A^{u_\ell} + \varepsilon_\ell I)$ and $m = \nu_\ell$ gives
\begin{equation}
0 \leq \nabla_i \nabla_j u_\ell (\delta_{ij} - \nu_{\ell,i} \nu_{\ell,j}) - \frac{ |\nabla u_\ell|^2}{(n-2)u_\ell} + \frac{n|\partial_{\nu_\ell} u_\ell |^2}{(n-2)u_\ell }  + C\varepsilon_\ell \text{ in } f_\ell(B).
	\label{Eq:02III22-1}
\end{equation}
To proceed, we note that $\nabla_i \nabla_j u_\ell (\delta_{ij} - \nu_{\ell,i} \nu_{\ell,j})$ is the trace of the projection of $\nabla^2u$ along the tangent hyperplanes of $f_\ell(B)$, i.e. if $e_1, \ldots, e_{n-1}$ is a local orthonormal frame along $f_\ell(B)$, then
\[
\nabla_i \nabla_j u_\ell (\delta_{ij} - \nu_{\ell,i} \nu_{\ell,j}) = \sum_{j=1}^{n-1} \nabla^2 u_\ell(e_j, e_j).
\]
Letting $II_\ell$ and $H_\ell$ be the second fundamental form and the mean curvature of $f_\ell(B)$, and noting that $\nabla^2 u_\ell = \nabla^2_{f_\ell(B)} u_\ell + II_\ell \partial_{\nu_\ell} u_\ell$, we thus have
\begin{equation}
\nabla_i \nabla_j u_\ell (\delta_{ij} - \nu_{\ell,i} \nu_{\ell,j}) = \Delta_{f_\ell(B)} u_\ell + H_\ell  \partial_{\nu_\ell} u_\ell.
	\label{Eq:02III22-2}
\end{equation}
Substituting \eqref{Eq:02III22-2} into \eqref{Eq:02III22-1} we obtain
\[
0 \leq \Delta_{f_\ell(B)} u_\ell + H_\ell  \partial_{\nu_\ell} u_\ell + \frac{(n-1)|\partial_{\nu_\ell} u_\ell |^2}{(n-2) u_\ell }  - \frac{|\nabla_{f_\ell(B)} u_\ell|^2 }{(n-2) u_\ell } + C\varepsilon_\ell \text{ in } f_\ell(B).
\]
Pulling this back to $B$, we get
\begin{multline}
0 \leq \Delta_{f_\ell^* g} u_\ell \circ f_\ell + H_\ell \circ f_\ell \partial_{\nu_\ell} u_\ell \circ f_\ell\\
	 + \frac{(n-1)|\partial_{\nu_\ell} u_\ell \circ f_\ell|^2}{(n-2) u_\ell \circ f_\ell}  - \frac{|\nabla_{\tilde g} u_\ell \circ f_\ell|^2 }{(n-2) u_\ell \circ f_\ell} + C\varepsilon_\ell \text{ in } B.
	 \label{Eq:Obs-PreLimit}
\end{multline}
Sending $\ell \rightarrow \infty$ and using the relation between $H$ and $H_u$, we arrive at \eqref{Eq:MCIneql}.
\end{proof}

\begin{proof}[Proof of Theorem \ref{thm:MX}.]
Suppose by contradiction that such a function $u$ exists. By Proposition \ref{prop:Conc}, either a component $\partial \Omega_{i_0}$ of $\partial\Omega$ has zero mean curvature with respect to $g$, or there is a set $E$ such that $\partial E \subset \Omega$ and, for some set $\mathcal{S}$ Hausdorff dimension at most $n-8$, $\partial E \setminus \mathcal{S}$ is a $W^{2,p}$-hypersurface with zero mean curvature with respect to $g$ for every $1 \leq p < \infty$. (In particular, if $n \leq 7$, then $\mathcal{S}$ is empty.) We consider these cases in turn.

\medskip
\noindent\underline{Case 1:}  $\partial \Omega_{i_0}$ has zero mean curvature with respect to $g$.

Let $\Sigma = \partial\Omega_{i_0}$ and $\tilde g$ be the metric on $\Sigma$ induced by $\mathring{g}$. Since $\lda (A^u) \in \bar\Gamma_2$, by Corollary \ref{Cor:Obs1} we have 
\begin{equation} \label{eq:final0}
\Delta_{\tilde g} u-\frac{n-2}{4(n-1)} |\mathring{H}_\Sigma|^2 u -\frac{1}{(n-2) u} |\nabla_{\tilde g} u|^2 \geq 0 \quad \mbox{on }\Sigma
\end{equation}
in the weak sense where $\mathring{H}_\Sigma$ is the mean curvature of $\Sigma$ with respect to the flat ambient metric. Testing \eqref{eq:final0} against a constant function, we obtain that $u$ is constant on $\Sigma$ and $\mathring{H}_\Sigma = 0$ a.e. on $\Sigma$. This gives a contradiction since there is no smooth closed minimal hypersurfaces in Euclidean space.

\medskip
\noindent\underline{Case 2:} There is a set $E$ such that $\partial E \subset \Omega$ and, for some set $\mathcal{S}$ Hausdorff dimension at most $n-8$, $\partial E \setminus \mathcal{S}$ is a $W^{2,p}$-hypersurface with zero mean curvature with respect to $g$ for every $1 \leq p < \infty$

Let $\Sigma = \partial E$ and $\tilde g$ be the metric on $\Sigma\setminus \mathcal{S}$ induced by $\mathring{g}$. As before, we have
\begin{equation} \label{eq:final}
\Delta_{\tilde g} u-\frac{n-2}{4(n-1)} |\mathring{H}_\Sigma|^2 u -\frac{1}{(n-2) u} |\nabla_{\tilde g} u|^2 \geq 0 \quad \mbox{on }\Sigma\setminus \mathcal{S}
\end{equation}
in the weak sense.

If $\mathcal{S}$ is empty, we have as in the previous case that $\mathring{H}_\Sigma = 0$ a.e. on $\Sigma$. Since $\Sigma$ is a $W^{2,p}$-hypersurface for all $\alpha \in (0,1)$ and $p \in (1,\infty)$, the regularity theory for minimal surfaces implies that $\Sigma$ is smooth, which again gives a contradiction.

Suppose in the rest of the proof that $\mathcal{S}$ is non-empty, which implies $n\ge 8$. Since $\mathcal{S}$ has Hausdorff dimension at most $n - 8$,   for any $\epsilon>0$, one can find finite many (Euclidean) balls $ B_{r_1}(x_1), \ldots, B_{r_m}(x_m)$ with $m \geq 1$, $x_i \in \mathcal{S}$ and $0 < r_i < r_0/4$ such that $\mathcal{S} \subset \cup_{i=1}^m B_{r_i}(x_i)\subset \Omega$ 
\[
\sum_{i=1}^m r_i^{n-2} <\epsilon.
\]

Let
\[
\zeta_i(x)= \begin{cases} 
1& \quad \mbox{if } |x-x_i| \le r_i, \\
2-\frac{|x-x_i|}{r_i}& \quad  \mbox{if } |x-x_i| \le 2r_i, \\  
0& \quad  \mbox{if } |x-x_i| \ge 2r_i.
\end{cases}
\] 
Let $\zeta= \max_{1\le i \le m} \zeta_i \in C_c^{0,1}(\mathbb{R}^n)$.  Then $\zeta=0$ on $\mathbb{R}^n \setminus \cup_{i=1}^m B_{2r_i}(x_i)$, $\zeta=1$ on $\cup_{i=1}^m B_{r_i}(x_i)$ and, for every $x \in \cup_{i=1}^m B_{2r_i}(x_i) \setminus B_{r_i}(x_i)$, there is some $j = j(x)$ such that $x \in B_{2r_j}(x_j) \setminus B_{r_j}(x_j)$ and $|d\zeta_j|_{\mathring{g}} < \frac{2}{r_j}$. Thus, by \eqref{Eq:RegVG},
\begin{eqnarray*}
\Big| \int_{\Sigma\setminus \mathcal{S}} \Delta_{\tilde g} u (1-\zeta) \,\ud \textrm{vol}_{\tilde g}\Big| 
	&=& \Big| \int_{\Sigma \cap (\cup_{i=1}^m B_{2r_i}(x_i) \setminus B_{r_i}(x_i))} \tilde g(d u, d\zeta) \,\ud \textrm{vol}_{\tilde g}\Big| \\
	&\leq& C( \| u\|_{C^{0,1}(\bar\Omega)}) \sum_{i=1}^m r_i^{-1} \textrm{Vol}_{\tilde g}\big(\Sigma \cap  (B_{2r_i}(x_i) \setminus B_{r_i}(x_i)\big)  \\
	&\stackrel{\eqref{Eq:RegVG}}{\le}& C( \| u\|_{C^{0,1}(\bar\Omega)}) \sum_{i=1}^m r_i^{n-2} \le C( \| u\|_{C^{0,1}(\bar\Omega)})\epsilon.
\end{eqnarray*}

Now multiplying \eqref{eq:final} by $1-\zeta$ and integrating over $\Sigma \setminus \mathcal{S}$ and sending $\epsilon \rightarrow 0$, we get $u$ is constant on $\Sigma \setminus \mathcal{S}$ and $\mathring{H}_\Sigma = 0$ a.e. on $\Sigma \setminus \mathcal{S}$. As in the case when $\mathcal{S}$ is empty, this implies that $\Sigma \setminus \mathcal{S}$ is smooth. Take a Euclidean ball $B$ containing $\Sigma$ such that $\partial B$ touch $\Sigma$ at some point, say $x_0$. As in the proof of Corollary \ref{cor:BdrReg}, this implies that $x_0 \in \Sigma \setminus \mathcal{S}$. Since, near $x_0$, $\Sigma$ has zero mean curvature and $\partial B$ has positive mean curvature with respect to $\mathring{g}$, we obtain a contradiction to the strong maximum principle. The proof is complete.
\end{proof}


\small


\newcommand{\noopsort}[1]{}

\noindent Y.Y. Li 

\noindent Department of Mathematics, Rutgers University\\ Hill Center, Busch Campus, 110 Frelinghuysen
Road, Piscataway, NJ 08854, USA.  \\[2mm]
 \textsf{Email:yyli@math.rutgers.edu} 
 
 \bigskip

\noindent L. Nguyen

\noindent Mathematical Institute and St Edmund Hall, University of Oxford\\ Andrew Wiles
Building, Radcliffe Observatory Quarter, Woodstock Road, Oxford OX2 6GG, UK. \\[2mm]
 \textsf{Email: luc.nguyen@maths.ox.ac.uk}
 
 \bigskip

\noindent J. Xiong

\noindent School of Mathematical Sciences, Laboratory of Mathematics and Complex Systems, MOE\\
Beijing Normal University, Beijing 100875, China\\[2mm]
 \textsf{Email: jx@bnu.edu.cn}

\end{document}